\documentclass[12pt,preprint]{imsart}
\RequirePackage[OT1]{fontenc}
\RequirePackage{amsthm,amsmath}

\usepackage{graphicx,amsmath,amsthm,amsfonts,amssymb}
\usepackage{booktabs, rotating, float}
\usepackage[hmargin=3cm,vmargin=3cm]{geometry}
\usepackage{mathtools}
\usepackage{upgreek}
\usepackage{yhmath}

\usepackage{mathrsfs}
\usepackage[numbers]{natbib}
\usepackage{enumerate}
\usepackage[usenames,dvipsnames,svgnames,table]{xcolor}
\usepackage[pagebackref,letterpaper=true,colorlinks=true,pdfpagemode=none,urlcolor=blue,linkcolor=blue,citecolor=BrickRed,pdfstartview=FitH]{hyperref}

\newtheoremstyle{thmstyle}
  {3pt}{3pt}
  {\itshape}     
  {}
  {\bfseries}    
  {.}
  { }
  {\thmname{#1}\thmnumber{ #2}\thmnote{ (#3)}}

\newtheoremstyle{defstyle}
  {3pt}{3pt}
  {\normalfont}  
  {}
  {\bfseries}
  {.}
  { }
  {\thmname{#1}\thmnumber{ #2}\thmnote{ \textbf{(#3)}}}

\theoremstyle{thmstyle}

\newtheorem{thm}{Theorem}[section]

\newtheorem{prop}[thm]{Proposition}
\newtheorem{lemma}[thm]{Lemma}
\newtheorem{assumption}[thm]{Assumption}
\theoremstyle{defstyle}
\newtheorem{defn}[thm]{Definition}
\newtheorem{rem}[thm]{Remark}

\newcommand{\CB}{\mathcal{B}}
\newcommand{\CC}{\mathcal{C}}
\newcommand{\CA}{\mathcal{A}}
\newcommand{\CD}{\mathcal{D}}

\newcommand{\CF}{\mathcal{F}}
\newcommand{\CG}{\mathcal{G}}
\newcommand{\CZ}{\mathcal{Z}}
\newcommand{\CM}{\mathcal{M}}
\newcommand{\CV}{\mathcal{V}}
\newcommand{\CU}{\mathcal{U}}
\newcommand{\CW}{\mathcal{W}}
\newcommand{\R}{\mathbb{R}}
\newcommand{\Z}{\mathbb{Z}}

\newcommand{\N}{\mathbb{N}}
\newcommand{\T}{\mathbb{T}}
\renewcommand{\P}{\mathbb{P}}
\newcommand{\E}{\mathbb{E}}

\newcommand{\bY}{\mathbf{Y}}
\newcommand{\bX}{\mathbf{X}}
\newcommand{\X}{\mathbb{X}}
\newcommand{\supp}{\operatorname{supp}}
\renewcommand{\emptyset}{\varnothing}
\newcommand{\Caputo}{{}^{\scriptscriptstyle \mathsf{C}} D}
\newcommand{\RL}{{}^{\scriptscriptstyle \mathsf{RL}} D}
\newcommand{\vvvert}{\vert\!\vert\!\vert}

\def\SF{\mathscr{F}}
\def\fC{\mathfrak{C}}

\def\d{\mathrm{d}}
\def\Id{\mathrm{Id}}
\def\Li{\mathrm{Li}}
\def\ini{\mathrm{ini}}

\newcommand{\1}[1]{\mathbf{1}_{#1}}

\numberwithin{equation}{section}
\numberwithin{table}{section}

\startlocaldefs
\endlocaldefs

\begin{document}

\begin{frontmatter}

\title{Space-time fractional stochastic Burgers-type equation}
\runtitle{Space-time fractional stochastic Burgers-type equation}
\runauthor{M.-W. Kuo, K. Matetski}
\author{
  \fnms{Ming-Wei} \snm{Kuo}\thanks{E-mail: kuomingw@msu.edu}
  \and
  \fnms{Konstantin} \snm{Matetski}\thanks{E-mail: matetski@msu.edu}
}
\address{Michigan State University, Department of Mathematics \\ East Lansing, MI 48824, USA}

\begin{abstract}
We establish the existence and uniqueness of solutions to a $(1+1)$-dimensional fractional stochastic Burgers-type equation featuring a Caputo fractional time derivative, a fractional Laplacian, and space-time white noise forcing modified by a Riemann--Liouville fractional integral. From a physical perspective, equations of this type describe non-Markovian dynamics with long-range temporal and spatial dependencies. The main difficulty is that the solution to the linearized equation has low spatial H\"{o}lder regularity, rendering the singular nonlinearity $G(u)\partial_x u$ classically ill-defined. The proof relies on two main ingredients: first, combining the Da Prato--Debussche decomposition with rough path theory to rigorously interpret the nonlinear product; and second, deriving Schauder-type estimates for fractional heat kernels given in terms of slowly decaying Mittag--Leffler functions. The nonlocality of the fractional time derivative requires a strong assumption on the initial condition.
\end{abstract}

\end{frontmatter}

\setcounter{tocdepth}{1}
\tableofcontents

\section{Introduction}

The aim of this article is to study the existence and uniqueness of a solution to the following fractional stochastic Burgers-type equation in $(1+1)$-dimensions
\begin{equation}\label{eq:FSBL}
    \partial_t^{\beta} u = - (-\Delta)^{\alpha/2} u + F(u) + G(u) \partial_x u - (-\Delta)^{\delta/2} I_t^{1-\beta} \xi
\end{equation}
for a function $u : [0, \infty) \times \T \to \R^n$ with $n \geq 1$ and an initial condition $u(0,x) = u_0(x)$, where $t \geq 0$ is the time variable and the spatial variable $x$ takes values on the circle $\T \coloneqq \R / 2\pi\Z$, which we identify with the interval $[-\pi, \pi]$. Here, $-(-\Delta)^{\alpha/2}$ and $-(-\Delta)^{\delta/2}$ are the fractional Laplace operators on the circle with $\alpha > 0$ and $\delta \geq 0$, the operator $\partial_t^\beta$ is a Caputo fractional time derivative with $\beta \in (0,1)$, and $I_t^{1-\beta}$ is a Riemann--Liouville integral with respect to the time variable. We provide the definitions of these operators in Section~\ref{sec: definition} and make assumptions on the parameters below. The functions $F : \R^n \to \R^n$ and $G : \R^n \to \R^{n \times n}$ are $\CC^1$ and $\CC^3$ respectively. The derivative $\partial_x u$ is defined in the distributional sense, and we write $G(u) \partial_x u$ for a matrix-vector multiplication.  The equation is driven by an $\R^n$-valued space-time white noise $\xi$, i.e., a time derivative of an $n$-dimensional $L^2$-cylindrical Wiener process on a filtered probability space $(\Omega, \SF, (\SF_t)_{t \geq 0}, \P)$. This implies that the solution of the equation is a random function $u : [0, \infty) \times \T \times \Omega \to \R^n$, but we prefer to omit the probability space $\Omega$ from the notation. The name ``Burgers-type" refers to the nonlinearity $G(u) \partial_x u$ in the equation. Namely, in the case $G(u)=u$, we recover the standard nonlinear term in the Burgers equation.

Our main motivation to study equation \eqref{eq:FSBL} is due to the recent progress in the study of singular stochastic PDEs and their emergence as scaling limits of interacting particle systems. For example, the dynamical $\Phi^4$ equation describes the scaling limit of the Ising--Kac model near criticality \cite{MR4869263}, and the KPZ equation appears as a scaling limit of exclusion processes with weak asymmetry \cite{MR1462228}. These singular equations cannot be solved using the classical theory à la Walsh because of classically undefined nonlinearities (this is what the name ``singular" refers to), and one needs to exploit one of the recently developed theories, such as rough paths \cite{HairerRoughSPDEs}, regularity structures \cite{RegularityStructures}, or paracontrolled distributions \cite{MR3406823}. In the case of the KPZ equation, it can be linearized by the Cole--Hopf transformation, which does not seem to work well when studying scaling limits of exclusion processes with non-nearest neighbor jumps. The main motivation for our work is to understand a time-fractional KPZ equation and its emergence as a scaling limit of non-Markovian exclusion processes \cite{MR2805266}. Solving a simpler equation \eqref{eq:FSBL} and understanding its approximations is the first step in this direction. The non-fractional Burgers-type equation \eqref{eq:FSBL} with $\beta = 1$, $\alpha = 2$, and $\delta = 0$ was the first and the simplest singular stochastic PDE solved using the theory of rough paths \cite{HairerRoughSPDEs}. This solution method was later generalized in \cite{HairerKPZ} to solve the KPZ equation and led to the development of the theory of regularity structures \cite{RegularityStructures}.

Fractional stochastic PDEs are natural models studied by physicists and economists because they describe, at least formally, physical processes with nonlocal interactions and long memory. More precisely, a fractional Laplacian arises from long-range interactions of particles. On the other hand, the fractional time derivative describes the evolution of non-Markovian processes with long memory. We refer to the monograph on the topic \cite{MR3971272}, where such approximations are explained. Such evolutions exhibit an unusual tendency to remain in a given state for extended periods of time, resulting in dynamics characterized by long waiting times and memory effects. Processes with such behavior are usually referred to as anomalous diffusion. Defining such non-Markovian processes and studying their scaling limits are very difficult problems, especially when the limiting stochastic PDEs are singular. Only a few such scaling limits appear to be known (see some relevant results in \cite{10.1214/25-AIHP1579, MR2519002, MR3971272}). In this article, we demonstrate how the theory of rough paths can be applied to solve the simplest singular fractional equation \eqref{eq:FSBL}, and we hope it will initiate research in this direction. 

We should note that the form of the driving noise $I_t^{1-\beta} \xi$ will play an important role when we apply the fractional Duhamel's principle to write the equation in a mild form in Section~\ref{sec: stationary sol}. More precisely, the inverse of the linear operator $\partial_t^{\beta} + (-\Delta)^{\alpha/2}$ will cancel the fractional integral $I_t^{1-\beta}$ and simplify our analysis of the equation. Having noise in this form is, however, not an artificial trick but a very natural assumption \cite{MR4957469}, and it corresponds to a long-term effect of an external force.

The main difficulties with solving equation \eqref{eq:FSBL} are twofold: the nonlinearity $G(u) \partial_x u$ is classically undefined, and the Green function for the linear operator does not have a fast decay. By analogy with \cite{HairerRoughSPDEs}, one expects a solution to \eqref{eq:FSBL}, provided it exists, to have the same spatial regularity as the stationary solution to the linearized equation 
\begin{equation}\label{eq:linear}
    \partial_t^{\beta} \psi = - (-\Delta)^{\alpha/2} \psi - (-\Delta)^{\delta/2} I_t^{1-\beta}\Pi_0 \xi,
\end{equation}
where $\Pi_0$ is the projection map that removes the constant Fourier mode. We need to use $\Pi_0$ to have a stationary solution. We prove in Section~\ref{sec: stationary sol} that for any $t > 0$, the process $\psi(t,\cdot)$ has almost surely spatial H{\"o}lder regularity 
\begin{equation}\label{eq:gamma-0}
    \gamma < \gamma_0 \coloneqq \frac{1}{2}\left(\frac{\alpha}{\beta} - 2 \delta - 1\right), 
\end{equation}
assuming the parameters satisfy our standing Assumption~\ref{assump:parameters}. Hence, to define the product $G(\psi) \partial_x \psi$, we need to multiply a $\CC^\gamma$ function with a $\CC^{\gamma-1}$ distribution. A classical result in harmonic analysis \cite{MR2768550, MR631751} says, however, that the product 
\begin{equation*}
    \CC^\alpha \times \CC^\beta \ni (u,v) \mapsto u v
\end{equation*}
is well-defined as a bilinear map when $\alpha + \beta > 0$. Hence, in the case $\gamma < \frac{1}{2}$, which we are interested in, this product is classically undefined, which prevents us from solving equation \eqref{eq:FSBL} classically. We should note that in a special case when $G = \nabla \CG$, the issue with defining the product does not arise, because we can use the chain rule and write it as 
\begin{equation*}
    G(u) \partial_x u = \partial_x \CG(u),
\end{equation*}
which is a well-defined function or distribution as long as $u$ is continuous.

Following \cite{HairerRoughSPDEs} (and its generalization \cite{MR3010394} for a multiplicative noise), we will use the theory of rough paths to define the product $G(u) \partial_x u$ in terms of a rough path integral. The main tool for this is the Da Prato--Debussche ansatz, which says that, provided a solution $u$ exists and $u(t, \cdot) \in \CC^\gamma$, the process $v(t,x) = u(t,x) - \psi(t,x)$ has a higher spatial regularity. Our choice of the parameters in the equation will guarantee that $v(t,\cdot) \in \CC^1$. Then the singular product can be defined as 
\[
\bigl(G(u) \partial_x u\bigr)(t,x) = \partial_x \left( \int_{-\pi}^x G(v + \psi)(t,y)\, \d_y (v + \psi)(t,y) \right),
\]
where the derivative $\partial_x$ is computed in the distributional sense, the integral with respect to $v$ is defined in the classical sense as 
\[
\int_{-\pi}^x G(v + \psi)(t,y) \partial_y v(t, y)\, \d y,
\]
and the integral with respect to $\psi$ is defined in the rough path sense, after lifting the process $\psi$ to a rough path. The definition of a rough path and a rough path integral can be found in Section~\ref{sec: rough path theory}. This definition allows us to solve the stochastic PDE \eqref{eq:FSBL} in the mild form using the standard Banach fixed-point argument on a suitable Banach space. 

The second difficulty of the equation is the fractional time derivative. In the classical setting, one can use the classical Schauder-type estimate to analyze the regularity of the solution \cite{HairerRoughSPDEs}. In the fractional setting, however, the situation is considerably more delicate, because the Schauder-type regularity estimates for such operators are less standard, as the respective Green functions do not have fast decays, as opposed to the heat kernel. Several related results are available in the literature: \cite{MR1675170} considers equations in which the sum of the orders of the temporal and spatial derivatives is less than $2$, \cite{MR3038123} establishes a De Giorgi--Nash-type regularity result for time-fractional diffusion equations in divergence form. These results provide important precedents for the analysis of fractional evolution equations, but they do not directly address the setting considered in this article. We should also note that the literature on fractional stochastic PDEs is considerably more limited than in the deterministic setting. 
In \cite{MR3980916}, an $L_p$-theory is developed for equations involving a time-fractional derivative acting both on the solution and on the stochastic forcing. In \cite{MR3357610}, Kolmogorov's continuity criterion is then used to extend classical regularity results for the stochastic heat equation. These works demonstrate that fractionalization of either the differential operator or the noise can substantially alter the regularity properties of the resulting stochastic PDE.

Another interesting phenomenon observed in a fractional equation is that the regularity of the solution will strongly depend on the regularity of the initial data, which is not true for the classical heat equation, which exhibits an instant smoothing property. In fact, this phenomenon is expected, as time-fractional diffusion equations are used to model dynamic processes in materials with memory, in which the current state depends on the past in a strong way. 

In this article, we define a solution of the fractional stochastic Burgers-type equation \eqref{eq:FSBL} and prove that it admits a mild solution with H{\"o}lder regularity of any order $\gamma < \gamma_0$ provided the Assumptions~\ref{assump:parameters} and~\ref{assump:initial} are satisfied. The main result is stated in Theorem~\ref{thm:main}. The remainder of this article is organized as follows. We give a brief review of rough path theory and the theory of controlled rough paths in Section~\ref{sec: rough path theory}, with emphasis on the results that will be used throughout this work. 
Section~\ref{sec: fractional heat eq} is devoted to the fractional heat kernel and associated Mittag--Leffler functions. We recall the relevant properties of these objects and develop the estimates needed in the subsequent analysis. In Section~\ref{sec: stationary sol}, we apply these results to study the properties of the stationary solution of the corresponding linearized equation. One of the main technical parts of the article is contained in Section~\ref{sec:schauder}, where we establish Schauder-type estimates for the general kernels that have only polynomial decay of Fourier coefficients. 
Finally, Section~\ref{sec: definition of sol and main proof} gives the notion of a mild solution to equation \eqref{eq:FSBL} and the proof of the main Theorem~\ref{thm:main}. 
Appendix~\ref{sec:besov space} provides several results about Besov spaces used throughout the article, and Appendix~\ref{sec:Duhamel} explains the fractional Duhamel's principle.

\subsection{Main result}

We make the following assumptions on the parameters $\alpha, \beta, \delta$ in equation \eqref{eq:FSBL} and $\gamma_0$ defined in \eqref{eq:gamma-0}.

\begin{assumption}\label{assump:parameters} We assume that $\alpha > \frac{5}{3}$, $\beta \in (\frac{1}{2}, 1)$, $\gamma_0 \in (\frac{1}{3}, \frac{1}{2}]$, and $\delta \geq 0$ where the parameters are moreover related by the identity \eqref{eq:gamma-0}.
\end{assumption}

As follows from our analysis of equation \eqref{contraction_M}, this assumption is necessary to guarantee that the solution has the spatial $\CC^1$ regularity. For example, we can take $\delta = 0$ and any $(\alpha, \beta)$ sufficiently close to $(2,1)$ such that $\alpha < 2 \beta$. We could have included the value $\beta = 1$ in the assumption, which would correspond to the classical time derivative in \eqref{eq:FSBL}, but this would require modifying our analysis for this particular case, which is not the main goal of this article. 

Furthermore, we make the following assumption on the initial condition. 

\begin{assumption}\label{assump:initial}
We assume that the initial condition $u_0$ of equation \eqref{eq:FSBL} is of one of the following types:
\begin{enumerate}[(i)]
    \item $u_0 - \psi_0 \in \CC^1(\T)$, where $\psi$ is the stationary solution of the linear equation \eqref{eq:linear}, which is defined in \eqref{eq:stationary_sol}, \label{assump: initial 1st kind}
    \item $u_0 \in \CC^{\zeta_0} (\T)$ for some $\zeta_0 > \frac{1}{2}$. \label{assump: initial 2nd kind}
\end{enumerate}
\end{assumption}

As one can see from the analysis in Section~\ref{sec:Z-bounds}, we need the integral $\int_0^\cdot u_0(x) \d u_0(x)$ to be defined as a function in a suitable H\"{o}lder space. Assumption \eqref{assump: initial 2nd kind} allows to define it as a Young integral \cite{MR1555421}. On the contrary, assumption \eqref{assump: initial 1st kind} allows to define the integral by defining $\int_0^\cdot \psi_0(x) \d \psi_0(x)$ in the rough paths sense. The first assumption on the initial condition may look artificial, but, as we prove, this is exactly the form of the solution of \eqref{eq:FSBL} at any time $t > 0$. Namely, if we get a local solution of the equation on an interval $[0, T]$, it can be written as $u(T, x) = \psi(T,x) + v(T, x)$ where $v(T, \cdot) \in \CC^1(\T)$. Then, considering the equation on the interval $[T, \infty)$ with $u(T, \cdot)$ being a new initial condition, we use assumption \eqref{assump: initial 1st kind} to get a local solution on a larger time interval. The details can be found in the last part of Section~\ref{sec:main-proof}. 
 
The following is the main result of this article. 

\begin{thm}\label{thm:main}
Let Assumptions~\ref{assump:parameters} and \ref{assump:initial} be satisfied. Then for almost every realization of the noise $\xi$ there exists $\bar T \in (0, \infty]$ such that the stochastic fractional Burgers-type equation \eqref{eq:FSBL} has a unique solution $u(t, \cdot) \in \CC^\gamma(\T)$ on the time interval $t \in [0, \bar T)$, for any $\gamma < \gamma_0$. This solution is defined in the sense of Definition~\ref{def: sol of FSBL}. 

In the case $\bar T < \infty$ this solution satisfies  
\begin{equation}\label{eq:convergence-lifetime}
\lim_{t \nearrow \bar T} \| u_t - \psi_t \|_{\CC^{\widetilde \gamma}} = \infty
\end{equation}
almost surely for any $\widetilde \gamma > 1$, where $\psi$ is the stationary solution of equation \eqref{eq:linear}. 

If $\|F\|_{\CC^1}$ and $\|G\|_{\CC^3}$ are bounded, then $\bar T = \infty$ almost surely.
\end{thm}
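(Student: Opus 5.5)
The plan follows the Da Prato--Debussche/rough path strategy of \cite{HairerRoughSPDEs}, adapted to the fractional kernel. Write $u = \psi + v$, where $\psi$ is the stationary solution \eqref{eq:stationary_sol} of \eqref{eq:linear}. By the fractional Duhamel principle (Appendix~\ref{sec:Duhamel}), $v$ should satisfy a mild equation. Its free part is the Mittag--Leffler evolution of $u_0 - \psi_0$ (plus the constant Fourier mode of the noise, which is harmless). Its nonlinear part is the fractional heat kernel $P$ of Section~\ref{sec: fractional heat eq} applied to $F(\psi+v) + \partial_x \int_{-\pi}^{\cdot} G(\psi+v)\,\d(\psi+v)$.

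The integral against $\psi$ is a rough integral. Here $G(\psi+v)$ is a controlled rough path with Gubinelli derivative $DG(\psi+v)$, and the iterated integral of $\psi$ is built in Section~\ref{sec: stationary sol}, which is possible since $\gamma_0 > \frac13$. The integral against $v$ is a Riemann integral, since $v \in \CC^1$.

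I would fix $\gamma \in (\frac13, \gamma_0)$ and $\widetilde\gamma > 1$ slightly, and work in the Banach space of $v \in C([0,T];\CC^{\widetilde\gamma})$. Possibly a time weight $t^{\eta}$ near $0$ is needed to absorb the rough initial data, since the Mittag--Leffler evolution does not smooth instantly. This is where Assumption~\ref{assump:initial} enters. Under (i), $u_0-\psi_0 \in \CC^1$ is preserved by the evolution. Under (ii), the integral $\int u_0\,\d u_0$ is a Young integral, and the initial evolution of $u_0$ can be placed in the controlled-path framework together with $\psi$ (the $Z$-bounds of Section~\ref{sec:Z-bounds}).

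The key steps are as follows.
\begin{enumerate}[(a)]
\item Show that, on a ball of radius $R$, the map $v\mapsto G(\psi+v)$ is a controlled path. Its controlled norm should be bounded and Lipschitz in $v$, with constants depending on $R$, $\|G\|_{\CC^3}$ and the rough path norm of $(\psi,\mathbb{\psi})$. Consequently the integral $I(v) = \int_{-\pi}^{\cdot} G(\psi+v)\,\d(\psi+v)$ lies in $\CC^\gamma$, and is locally Lipschitz in $v$ in that space.
\item Apply the Schauder-type estimates of Section~\ref{sec:schauder}: $P$ composed with $\partial_x$ maps $C_t\CC^{\gamma}$ into $C_t\CC^{\widetilde\gamma}$ with a factor $T^{\kappa}$, $\kappa>0$. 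This is exactly where $\alpha > \frac53$ and the relation \eqref{eq:gamma-0} are used: we need $\frac{\alpha}{\beta}$-smoothing minus one derivative to raise $\gamma$ above $1$. The same estimates give smallness of the $F$ term.
\item Conclude that the map is a contraction on a small ball for $T$ small, which gives local existence and uniqueness.
\end{enumerate}

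Uniqueness should be understood among solutions in the sense of Definition~\ref{def: sol of FSBL}.

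To extend the solution, restart at time $T$ using initial data of type (i). Because of the memory of $\partial_t^\beta$, restarting is not literally a semigroup property. The restarted mild equation must therefore carry the history term, namely the integral over $[0,T]$ of the kernel against the past nonlinearity. I would treat this term as an additional forcing that is already smooth in $x$ on $[T,T']$, and bound it again with the Schauder estimates. Iterating gives a maximal time $\bar T$. If \eqref{eq:convergence-lifetime} failed, $\|v_t\|_{\CC^{\widetilde\gamma}}$ would stay bounded along a sequence approaching $\bar T$, and the local existence time depends only on this norm, so the solution could be extended past $\bar T$. Strictly, one gets the $\limsup$ this way; to upgrade to the limit, I would use that the local time is uniform on bounded sets.

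For global existence with bounded $\|F\|_{\CC^1}$ and $\|G\|_{\CC^3}$, I would derive an a priori bound on $\|v_t\|_{\CC^{\widetilde\gamma}}$ that grows at most linearly in the unknown. When $G$ is bounded, the controlled norm of $G(\psi+v)$ grows at most linearly in $\|v\|$. A fractional Gronwall inequality (Henry type, with kernel $(t-s)^{-\theta}$, $\theta<1$) then prevents blow-up.

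The main obstacle I expect is step (b), together with the memory terms. Fractional kernels have only polynomial Fourier decay, so the Schauder bounds come with singular time weights. The contraction exponents, including those in the history and initial-data terms, must remain integrable, and this leaves little room: it is essentially what forces $\gamma_0>\frac13$ and the bounds in Assumption~\ref{assump:parameters}.
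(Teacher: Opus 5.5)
Your local well-posedness argument matches the paper's. It uses the Da Prato--Debussche splitting (the paper also removes the Brownian zero mode $B$), the rough integral $\CZ_v$ plus the classical integral of $G(u)\partial_x v$, the Schauder bounds of Proposition~\ref{sch est for fractional heat eq} in time-weighted spaces, and a small-time contraction. The paper localizes the rough path norm with the stopping time $\sigma_K$, which is equivalent to your pathwise constants.

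Two points need correcting. First, the spatial gain of the kernel is $\alpha$, not $\alpha/\beta$. The proposition needs $\widetilde\gamma<\gamma_*+\alpha-1$, and requiring $\gamma_*+\alpha-1>1$ with $\gamma_*>\frac13$ is exactly where $\alpha>\frac53$ comes from.

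Second, and more important, you misplace the role of Assumption~\ref{assump:initial}. In Definition~\ref{def: sol of FSBL} the forcing enters through $\partial_s^{1-\beta}$, interpreted via the integration by parts \eqref{eq:heat-solution}. This produces the boundary term $t^\beta (P_3(t)\ast f(0))$ of \eqref{eq: expression of X_3}, which evaluates the forcing at $s=0$. For $t>0$, the blow-up weights on $\|v_t\|_{\CC^{2\gamma_*}}$, $\|v_t\|_{\CC^1}$ and $\|v_t\|_{\CC^{\widetilde\gamma}}$ suffice, so no controlled-path treatment of the evolution of $u_0$ is needed. The assumption exists only to give $f(0)$ a meaning. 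In case (i) this requires $\CZ_v(0)$ and $G(u_0)\partial_x(u_0-\psi_0)\in L^\infty$. In case (ii) neither is available, since $v_0\notin\CC^{2\gamma_*}$ and $\partial_x v_0$ does not exist. The paper therefore merges $\CM_1$ and $\CM_2$ into $\widetilde\CM_{1,2}$ in \eqref{def: tilde CM_12}, whose time-zero datum is the Young integral $\int_{-\pi}^{\cdot}G(u_0)\,\d u_0\in\CC^{\zeta_0}$ (using $2\zeta_0>1$). This datum does not depend on $v$, so it drops out of the contraction estimate.

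You genuinely differ from the paper after the local step. The paper restarts from $u_{T^{**}}$ as a new type (i) initial condition and cites non-fractional arguments, which treats the problem as memoryless. You instead carry the history of the nonlocal derivative as extra forcing on $[T,T']$. Equivalently, you solve the fixed point on $[0,T']$ among functions equal to the constructed solution on $[0,T]$, which is what the Caputo derivative actually requires.

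For global existence, the paper notes that with bounded $F,G$ the constant in \eqref{eq:M-bound-proof} is linear in $L$, so the local time depends only on $K$, and it restarts again. Your Henry-type Gronwall argument on the full Volterra equation uses the same linear growth, $\|\CZ_v(s)\|_{\CC^{\gamma_*}}+\|\CV_v(s)\|_{L^\infty}\lesssim C(K)(1+\|v_s\|_{\CC^1})$, together with the integrable singularity $(t-s)^{\beta-1-\frac{\beta}{\alpha}(2-\gamma_*+\kappa)}$. It gives an a priori bound on $[0,\bar T\wedge\sigma_K)$ directly. It also avoids restarts, and with them the growth of $\|u_T-\psi_T\|_{\CC^1}$ that would force $K$ to increase, so it is cleaner.

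One caution: your continuation argument yields only $\limsup_{t\nearrow\bar T}\|u_t-\psi_t\|_{\CC^{\widetilde\gamma}}=\infty$. Uniformity of the local time on bounded sets does not upgrade this to a limit, because with memory the continued problem depends on the whole past and not just on $v_t$. The paper's passage from $\|v\|_{\widetilde\CB_T}\to\infty$ to \eqref{eq:convergence-T-K} does not supply this step either.
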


This theorem follows from a more precise result proved in Theorem~\ref{thm:main}. 

\subsection{Notation and definitions} \label{sec: definition}

We use the standard notation $\N$ for the set of natural numbers $\{1, 2, \ldots \}$ and denote $\N_0 = \N \cup \{0\}$ and $\R_+ = [0, \infty)$.

For $\gamma \in (0,1)$ we write $\CC^{\gamma}(\T)$ for the standard space of $\gamma$-H\"{o}lder continuous functions. For $\gamma \in \N_0$, $\CC^\gamma(\T)$ denotes the space of functions whose derivatives up to order $\gamma$ are continuous. More generally, for $\gamma > 0$, $\CC^\gamma(\T)$ consists of functions in $\CC^{ \lfloor \gamma \rfloor }(\T)$ whose derivative of order $\lfloor \gamma \rfloor$ belong to $\CC^{ \gamma - \lfloor \gamma \rfloor }(\T)$. 
When $\gamma=0$, we write $\CC(\T)$, and we often omit the domain $\T$ of the spaces to keep notation lighter. To work with space-time functions $f : [0, T] \times \T \to \R$ we define the norm
\begin{equation*}
    \| f \|_{\CC^{\gamma_1} ([0, T]; \CC^{\gamma_2})} \coloneqq \sup_{t \in [0, T]} \| f(t, \cdot) \|_{\CC^{\gamma_2}} + \sup_{ 0 \leq s < t \leq T } \frac{ \| f(t, \cdot) - f(s, \cdot) \|_{\CC^{\gamma_2}} }{ |t-s|^{\gamma_1}}
\end{equation*}
with $\gamma_1 \in (0,1)$ and $\gamma_2 > 0$, and define $\CC^{\gamma_1} ([0, T]; \CC^{\gamma_2}(\T))$ to be the space of such functions with a finite norm. We define 
\begin{equation*}
    \|f\|_{\CC^\gamma_T} \coloneqq \sup_{t \in [0,T]} \|f(t,\cdot)\|_{\CC^\gamma}, \qquad \|f\|_{L^\infty_T} \coloneqq \sup_{t \in [0,T]} \|f(t,\cdot)\|_{L^\infty}. 
\end{equation*}
Sometimes, we will work with functions $f$ which may have a blow-up at $t=0$. In this case, we define 
\begin{equation}
\|f\|_{\CC^{\gamma, \lambda}_T} \coloneqq \sup_{t \in (0,T]} t^\lambda \|f(t,\cdot)\|_{\CC^\gamma}, \label{def: C gamma t-lambda}
\end{equation}
for a parameter $\lambda \geq 0$ measuring the speed of the blow-up. For a function $f$ on a space-time domain, we use the notation $f(t,x)$ and $f_t(x)$ interchangeably, depending on the context. 

We use the notation $a \lesssim b$ throughout the article, which means that there is a constant $C > 0$, independent of the relevant quantities, such that $a \leq Cb$. The ``relevant quantities'' will always be clear from the context and refer to the space-time variables $x$, $t$, etc. 

We are going to define fractional generalizations of integrals and derivatives, which we use throughout the article. The provided material can be found in \cite{ABDELJAWAD2019122494}. 

\begin{defn}[Riemann--Liouville integral] 
    For $\beta \in (0,1)$ and a fixed point $a \in \R$, \emph{the left-side Riemann--Liouville integral} of order $\beta$ starting at $a$ is defined as 
    \begin{equation}\label{eq:RL-integral}
    \bigl(I^{\beta}_{a+} f\bigr)(t) \coloneqq \frac{1}{\Gamma(\beta)} \int_{a}^t \frac{f(r)}{(t-r)^{1-\beta}} \, \d r,
    \end{equation}
    for $t > a$ and a continuous function $f : [a, t] \to \R$. Here, $\Gamma(\beta)$ is the Gamma function. In the case $a = 0$ we simply write $I^{\beta} = I^{\beta}_{0+}$.
    
    For an end point $b \in \R$, \emph{the right-side Riemann--Liouville integral} of order $\beta$ ending at $b$ is defined as 
    \begin{equation}\label{eq:RL-integral-minus}
    \bigl(I^{\beta}_{b-} f\bigr)(t) \coloneqq \frac{1}{\Gamma(\beta)} \int_t^b \frac{f(r)}{(r-t)^{1-\beta}} \, \d r,
\end{equation}
    for $t < b$ and a continuous function $f : [t, b] \to \R$.
\end{defn}

We are going to use the following two standard definitions of fractional derivatives. 

\begin{defn}[Caputo fractional derivative] \label{def: Caputo derivative}
 For $\beta \in (0,1)$ and a starting point $a \in \R$, \emph{the left-side Caputo fractional derivative} of order $\beta$ starting at $a$ is defined as 
    \begin{equation}\label{eq:frac-deriv}
    \Caputo^{\beta}_{a+} f (t) \coloneqq \bigl( I^{1-\beta}_{a+} \partial \bigr) f(t) = \frac{1}{\Gamma(1-\beta)} \int_a^t \frac{f'(r)}{(t-r)^\beta} \, \d r,
    \end{equation}
    for $t > a$ and a $\CC^1$ function $f : [a, t] \to \R$. We denote $\partial^{\beta} f(t) = \Caputo^{\beta}_{0+} f(t)$.
    
    \emph{The left-side Riemann--Liouville fractional derivative} of order $\beta$ starting at $a \in \R$ is defined as 
    $$
    \RL^{\beta}_{a+} f(t) \coloneqq \big( \partial I^{1-\beta}_{a+} \big) f(t) = \frac{1}{\Gamma(1-\beta)} \frac{\d}{\d t} \int_a^t \frac{f(r)}{(t-r)^\beta} \, \d r,
    $$
    for $a < t$ and a function $f : [a, t] \to \R$ for which the expression is defined. 
    
    \emph{The right-side Riemann--Liouville fractional derivative} of order $\beta$ ending at $b \in \R$ is defined as 
    \begin{equation}\label{eq:RL-derivative}
    \RL^{\beta}_{b-} f(t) \coloneqq - \big( \partial I^{1-\beta}_{b-} \big) f(t) = - \frac{1}{\Gamma(1-\beta)} \frac{\d}{\d t} \int_t^b \frac{f(r)}{(r-t)^\beta} \, \d r,
    \end{equation}
    for $t < b$ and a function $f : [t,b] \to \R$ for which the expression is defined.
\end{defn}

The fractional operators have several properties which are similar to their non-fractional counterparts. We are going to list only a few of them, and more information can be found in \cite{ABDELJAWAD2019122494}. In particular, the Caputo derivative of order $1$ coincides with the classical derivative $f'$. Moreover, the fractional operators have the semigroup properties: 
\begin{equation*}
\partial^{\beta_1} \partial^{\beta_2} = \partial^{\beta_1 + \beta_2}, \qquad I^{\beta_1}_{0+} I^{\beta_2}_{0+} = I^{\beta_1 + \beta_2}_{0+},
\end{equation*}
for any $\beta_1, \beta_2 \in (0,1)$ such that $\beta_1 + \beta_2 < 1$. Combining \eqref{eq:frac-deriv} with the preceding identities, we get
\begin{equation}\label{eq:derivative-of-integral}
\partial^{\beta} I^{\beta}_{0+} = \Id,
\end{equation}
which we use throughout the paper. 

One important result, which makes the fractional operators hard to work with, is a more complicated version of the integration by parts formula:
\begin{equation}\label{eq:fractional-IBP}
    \int_a^b f(t) (\Caputo_{a+}^\beta g )(t) \, \d t = \int_a^b ( \RL_{b-}^\beta f)(t) g(t) \, \d t + \bigl[(I_{b-}^{1-\beta} f)(t) g(t) \bigr]_{t=a}^b,
\end{equation}
for any $a < b$ and any suitable continuous function $f$ for which $\RL_{b-}^\beta f$ is defined and a $\CC^1$ function $g$ on $[a,b]$.

When working with a function of several variables, it will be necessary to specify to which variable these operators are applied. In this case, we write $I^{\beta}_{a+, t}$,  $I^{\beta}_{b-, t}$, etc., which means that we apply the operators to the variable $t$.

To work with operators in the spatial domain, we will use the Fourier transform 
\[
\widehat{f}(k) := \frac{1}{ \sqrt{2\pi} } \int_{-\pi}^{\pi} f(x) e^{-ikx} \,\d x, \qquad k \in \Z,
\]
where $f \in L^1(\T)$. Whenever it is convenient, we use the notations $\widehat{f}$ and $\CF(f)$ interchangeably. The inverse Fourier transform is given by the series 
\[
\CF^{-1} \bigl[ \widehat{f}\, \bigr] (x) = \frac{1}{ \sqrt{2 \pi} } \sum_{k \in \Z} \widehat{f}(k) e^{ikx}, \qquad x \in \T.
\]
Then the fractional Laplacian $(-\Delta)^{\alpha/2}$ is defined in the standard way via the Fourier multiplier
\begin{equation*}
    (-\Delta)^{\alpha/2} f(x) \coloneqq \CF^{-1} \left[ |\cdot|^{\alpha} \widehat{f}(\cdot) \right] (x) = \frac{1}{ \sqrt{2 \pi} } \sum_{k \in \Z} |k|^{\alpha} \widehat{f}(k) e^{ikx}.
\end{equation*}

\section{Elements of rough path theory} \label{sec: rough path theory}

In this section, we review some elements of rough path theory, which was originally developed by T. Lyons in \cite{MR1654527} (a broader exposition can be found in \cite{MR4174393, MR2604669, MR2036784, MR2314753}).

We denote by $\CC_2( [a,b]^2, \R^n)$ the space of continuous functions from $[a,b]^2$ to $\R^n$ which vanish on the diagonal and define 
\begin{equation*}
    \CC^\gamma_2( [a,b]^2, \R^n) \coloneqq \left\{ R \in \CC_2( [a,b]^2, \R^n) : \|R\|_{\CC_2^\gamma} < \infty \right\}
\end{equation*}
with the norm 
\[
\|R\|_{\CC_2^\gamma} \coloneqq \displaystyle\sup_{x < x' \in [a,b] } \frac{ |R_{x,x'}| }{|x'-x|^\gamma}. 
\]
Note that the space $\CC^\gamma_2( [a,b]^2, \R^n)$ is a Banach space under this norm. For a function $f : [a,b] \to \R$ and $a \leq x < x' \leq b$, we denote its increment by
\[
\delta f_{x,x'} \coloneqq f(x') - f(x).
\]

A rough path $\bX$ on the interval $[a,b]$ has two components $\bX = (X, \X)$, where $X \in \CC([a,b], \R^n)$, $\X \in \CC_2( [a,b]^2, \R^{n \times n})$, and the Chen's relation holds 
\begin{equation*}
    \X^{ij}_{x,x'} - \X^{ij}_{x,y} - \X^{ij}_{y,x'} = \delta X^i_{x,y} \delta X^j_{y,x'},
\end{equation*}
for all $1 \leq i,j \leq n$ and $x \leq y \leq x' \in [a,b]$. One should think of $\X$ as being the integral
\begin{equation*}
    \X^{ij}_{x,x'} \,\, ``=" \, \int_x^{x'} \delta X^i_{x,y}\, \d X^j_y, 
\end{equation*}
although the expression on the right-hand side is not always defined. So, one can think that the integral on the right-hand side is defined by the expression on the left-hand side. 

For $\gamma \in (0, \frac{1}{2}]$, we denote by $\CD^{\gamma} ( [a,b], \R^n)$ the space of those rough paths $\bX = (X, \X)$ such that $X \in \CC^{\gamma} ( [a,b], \R^n)$ and $\X \in \CC^{2 \gamma}_2 ( [a,b]^2, \R^{n \times n} )$, and equip it with the natural semi-norm $\vvvert \bX \vvvert_{\gamma} \coloneqq \|X\|_{\gamma} + \sqrt{ \|\X\|_{2 \gamma} }$, where 
\[
\|X\|_{\gamma} \coloneqq \sup_{x \neq x' \in [a, b]} \frac{|X(x) - X(x')|}{|x-x'|^\gamma}.
\]
The space $\CD^\gamma ( [a,b], \R^n )$ of $\gamma$-rough paths is not even a vector space because Chen's relation is nonlinear. However, it is still a complete metric space with the metric $d(\bX, \bX') = \vvvert \bX - \bX' \vvvert_{\gamma}$, with the difference $\bX - \bX'$ defined component-wise, which is a closed subset of $\CC^\gamma \times \CC_2^{2 \gamma}$.

For a $\gamma$-rough path valued function $\bX_t = (X_t, \X_t)$ on $t \in [0,T]$, we define respectively the semi-norms
\begin{equation*}
    \| \X \|_{2 \gamma; T} \coloneqq \sup_{s \in [0,T]} \| \X_s \|_{2 \gamma}, \qquad\qquad \vvvert \bX \vvvert_{\gamma; T} \coloneqq \sup_{s \in [0,T]} \vvvert \bX_s \vvvert_{\gamma},
\end{equation*}
and define the space of such functions by $\CD^{\gamma}_T ( [a,b]^2, \R^n)$.

\subsection{Controlled rough paths}

The controlled rough path is an important notion introduced in \cite{MR2091358}, which allows us to extend Young's integration theory to a larger class of functions. Given a rough path $\bX \in \CD^{\gamma}([a,b], \R^n)$, we say that a pair of functions $\bY = (Y, Y')$ is controlled by $\bX$ if $Y \in \CC^\gamma ([a,b], \R^m)$, $Y' \in \CC^{\gamma} ([a,b], \R^{m \times n} )$, and the remainder term $R$, defined as 
\begin{equation}\label{eq:remainder-rough-path}
    R_{x,x'} \coloneqq \delta Y_{x,x'} - Y'_x \delta X_{x,x'},
\end{equation}
satisfies $\|R\|_{2 \gamma} < \infty$. Here, the product in $Y'_x \delta X_{x,x'}$ is the matrix multiplication. The process $Y'$ is called \emph{the Gubinelli derivative} of $Y$ with respect to $\bX$. We define by $\CC^\gamma_{\bX}$ the space of such functions $\bY = (Y, Y')$, controlled by $\bX$, and equip it with the semi-norm
\begin{equation*}
    \|\bY\|_{\bX, \gamma} \coloneqq \|Y\|_{\gamma} + \|Y'\|_{\gamma} + \|R\|_{2 \gamma}. 
\end{equation*}
Then, for a fixed rough path $\bX$, the space $\CC^\gamma_{\bX}$ is a Banach space with the norm induced by this semi-norm. 

When we consider functions $\bY_t = (Y_t, Y'_t)$, parametrized by $t \in [0, T]$ and with values in $\CC^\gamma_{\bX}$, we denote, as above, the space by $\CC^\gamma_{\bX; T}$ and the respective semi-norm by $\|\bY\|_{\bX, \gamma; T}$.

We will use the following result later, which shows that a twice-differentiable function of a rough path yields a controlled rough path. 

\begin{lemma}\label{Gubinelli deriv of G(v+x)}
    Let $\bX = (X, \X) \in \CD^\gamma( [a,b], \R^n)$ be a $\gamma$-rough path, $v \in \CC^{2 \gamma}( [a,b], \R^n)$ and $g \in \CC^2(\R^n, \R^n)$. Then $(g(X+v), \nabla g(X+v) ) \in \CC^\gamma_{\bX}$. Moreover, we have the estimates 
        \begin{align}
          \| R^g \|_{2 \gamma} 
          &\leq \frac{1}{2} \|g\|_{\CC^2} \left( \|X\|_{\gamma}^2 + 2 \|v\|_{ {2 \gamma} } \right), \label{bdd: remainder R of G} \\
        \| ( g(X+v), \nabla g(X+v) ) \|_{\bX, \gamma} 
        &\leq \frac{1}{2}\|g\|_{\CC^2} \left(4 \|X\|_{\gamma} + \|X\|_{\gamma}^2 + 4 ( \|v\|_{\gamma} + \|v\|_{ {2 \gamma} } )\right), \notag 
    \end{align}
    where $R^g$ is the remainder \eqref{eq:remainder-rough-path} for this controlled rough path.
\end{lemma}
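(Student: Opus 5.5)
The plan is to write $Y = g(X+v)$ with candidate Gubinelli derivative $Y' = \nabla g(X+v)$, and to control the remainder by a second-order Taylor expansion. Throughout I use the convention that $\|g\|_{\CC^2}$ bounds $\sup|g|$, $\sup|\nabla g|$ and $\sup|\nabla^2 g|$ separately; the constants in the statement suggest this sup-type convention. I also use that on $[a,b]$ the increment $\delta v_{x,x'}$ is bounded by $\|v\|_{2\gamma}|x'-x|^{2\gamma}$. Note that $v\in\CC^{2\gamma}$ with $2\gamma\le 1$, and $\|v\|_{2\gamma}$ is the Hölder seminorm.

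\textbf{Remainder bound.} Fix $x<x'$ and set $Z = X+v$. By Taylor's formula with integral remainder,
\[
\delta g(Z)_{x,x'} = \nabla g(Z_x)\,\delta Z_{x,x'} + \int_0^1 (1-\theta)\, \nabla^2 g\bigl(Z_x+\theta\,\delta Z_{x,x'}\bigr)[\delta Z_{x,x'},\delta Z_{x,x'}]\,\d\theta .
\]
Hence
\[
R^g_{x,x'} = \nabla g(Z_x)\,\delta v_{x,x'} + \int_0^1 (1-\theta)\,\nabla^2 g(\cdots)[\delta Z,\delta Z]\,\d\theta .
\]
The first term is at most $\|g\|_{\CC^2}\|v\|_{2\gamma}|x'-x|^{2\gamma}$.

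The second term is at most $\tfrac12\|g\|_{\CC^2}|\delta Z_{x,x'}|^2$. Expanding it naively would give cross terms involving $\|v\|_\gamma$, which do not appear in \eqref{bdd: remainder R of G}. To avoid them, I instead expand only in the $X$-direction. Write
\[
\delta g(Z) = \bigl[g(X_{x'}+v_{x'}) - g(X_{x}+v_{x'})\bigr] + \bigl[g(X_x+v_{x'})-g(X_x+v_x)\bigr].
\]
The second bracket is bounded by $\|g\|_{\CC^2}|\delta v|$. The first bracket equals $\nabla g(X_x+v_{x'})\delta X + \rho$, where $|\rho|\le\tfrac12\|g\|_{\CC^2}|\delta X|^2$.

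Replacing $\nabla g(X_x+v_{x'})$ by $\nabla g(X_x+v_x)$ costs $\|g\|_{\CC^2}|\delta v||\delta X|$. This is of order $|x'-x|^{3\gamma}$, which still leaves a cross term. The sharp bookkeeping therefore needs care. I expect the cleanest route is to accept the Taylor form and bound $|\delta Z|^2\le 2|\delta X|^2+2|\delta v|^2$, possibly with the interval length absorbed into the constants.

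This constant-matching is the main obstacle. The qualitative statement $\|R^g\|_{2\gamma}<\infty$ is immediate from either decomposition. To reach the precise constant in \eqref{bdd: remainder R of G}, I would first try the split above and check whether the cross term can be folded into the factor $2\|v\|_{2\gamma}$, using normalisations implicit in the paper such as interval length at most $2\pi$ and the sup convention for $\|g\|_{\CC^2}$.

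\textbf{Hölder norms of $Y$ and $Y'$.} By the mean value theorem,
\[
\|g(Z)\|_\gamma\le \|g\|_{\CC^2}\bigl(\|X\|_\gamma+\|v\|_\gamma\bigr), \qquad \|\nabla g(Z)\|_\gamma\le\|g\|_{\CC^2}\bigl(\|X\|_\gamma+\|v\|_\gamma\bigr).
\]
Adding these to the remainder bound gives
\[
\|g\|_{\CC^2}\Bigl(2\|X\|_\gamma+2\|v\|_\gamma+\tfrac12\|X\|_\gamma^2+\|v\|_{2\gamma}\Bigr),
\]
which is dominated by the stated right-hand side $\tfrac12\|g\|_{\CC^2}\bigl(4\|X\|_\gamma+\|X\|_\gamma^2+4\|v\|_\gamma+4\|v\|_{2\gamma}\bigr)$.

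Membership $(g(X+v),\nabla g(X+v))\in\CC^\gamma_{\bX}$ then follows from these three finite seminorms.
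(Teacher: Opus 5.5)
There is a genuine gap: the sharp remainder estimate \eqref{bdd: remainder R of G}, which you yourself flag as the main obstacle, is never proved.

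Your telescoping moves $v$ first and $X$ second. The second-order expansion in the $X$-direction is therefore centred at $X_x+v_{x'}$, while the Gubinelli derivative is evaluated at $X_x+v_x$. Correcting the base point leaves the cross term $\|g\|_{\CC^2}\|X\|_\gamma\|v\|_{2\gamma}(b-a)^{\gamma}$.

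Your fallback $|\delta Z|^2\le 2|\delta X|^2+2|\delta v|^2$ is worse. It doubles the coefficient of $\|X\|_\gamma^2$ and adds $\|g\|_{\CC^2}\|v\|_{2\gamma}^2(b-a)^{2\gamma}$, which is quadratic in $\|v\|_{2\gamma}$. Neither error can be absorbed into the stated right-hand side, which does not depend on $b-a$ and is linear in $\|v\|_{2\gamma}$. The paper relies on that linearity later, for the bounds on $\CZ_v$ that are linear in $L$.

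So you have established only the qualitative membership in $\CC^\gamma_{\bX}$. Your final bound on $\|(g(X+v),\nabla g(X+v))\|_{\bX,\gamma}$ is conditional on the unproved remainder estimate.

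The missing idea is to telescope in the opposite order: freeze $v$ at $v_x$ while moving $X$, and move $v$ last,
\begin{equation*}
R^g_{x,x'} = \bigl[g(X_{x'}+v_x)-g(X_x+v_x)-\nabla g(X_x+v_x)\,\delta X_{x,x'}\bigr] + \bigl[g(X_{x'}+v_{x'})-g(X_{x'}+v_x)\bigr].
\end{equation*}
The first bracket is the second-order Taylor remainder of $y\mapsto g(y+v_x)$ at $y=X_x$. Its base point now coincides with that of the Gubinelli derivative, so it is at most $\tfrac12\|g\|_{\CC^2}\|X\|_\gamma^2|x'-x|^{2\gamma}$ with no cross term. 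The second bracket is purely first order and is at most $\|g\|_{\CC^1}\|v\|_{2\gamma}|x'-x|^{2\gamma}$. This gives \eqref{bdd: remainder R of G} exactly, and it is the paper's argument. With it in hand, your mean-value bounds on $\|g(X+v)\|_\gamma$ and $\|\nabla g(X+v)\|_\gamma$ complete the second estimate as you describe.
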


\begin{proof}
    Since $g \in \CC^{2}$ and $v \in \CC^{2 \gamma}$, the function $g(v+X)$ is in $\CC^\gamma$ on $[a,b]$. Furthermore, for any $a \leq x < x' \leq b$ we have
    \begin{align*}
        &\big| g(X_{x'} + v_{x'}) - g(X_{x} + v_{x}) - \nabla g(X_{x} + v_{x}) ( X_{x'} - X_{x} ) \big| \\
          &\hspace{3cm} \leq \big| g(X_{x'} + v_{x}) - g(X_{x} + v_{x}) - \nabla g(X_{x} + v_{x}) ( X_{x'} - X_{x} ) \big| \\
        &\hspace{7cm} + \big| g(X_{x'} + v_{x'}) - g(X_{x'} + v_{x}) \big| \\
          &\hspace{3cm} \leq \frac{1}{2} \|g\|_{\CC^2} \|X\|_{\gamma}^2 |x' - x|^{2 \gamma} + \|g\|_{\CC^1} \|v\|_{ {2 \gamma} } |x' - x|^{2 \gamma}
    \end{align*}    
    where we used the standard bound for a Taylor approximation. Thus, the remainder term $R^g$ can be bounded as 
    \begin{align*}
        \|R^g\|_{2 \gamma} 
        &\leq \frac{1}{2} \|g\|_{\CC^2} \|X\|_{\gamma}^2 + \|g\|_{\CC^1} \|v\|_{ {2 \gamma} },
    \end{align*}     
    which implies that $\nabla g(X+v)$ is a Gubinelli derivative of $g(X+v)$ with respect to $\bX$. Moreover, we can obtain a bound on $\| ( g(X+v), \nabla g(X+v) ) \|_{\bX, \gamma}$ as
    \begin{align*}
        &\| ( g(X+v), \nabla g(X+v) ) \|_{\bX, \gamma} = \|g(X+v)\|_\gamma + \|\nabla g(X+v)\|_\gamma + \|R^g\|_{2 \gamma} \\
          &\hspace{1cm} \leq \|g\|_{\CC^1} ( \|v\|_\gamma + \|X\|_\gamma ) + \|g\|_{\CC^2} ( \|v\|_\gamma + \|X\|_\gamma ) + \frac{1}{2} \|g\|_{\CC^2} \|X\|_{\gamma}^2 + 2 \|g\|_{\CC^1} \|v\|_{ {2 \gamma} },
    \end{align*}    
    which yields the desired estimate.
\end{proof}

\subsection{Integration of controlled rough paths}

The theory of rough paths allows to define the integral $\int_a^b \bY_x \otimes \d \bX_x$ for $\bX \in \CD^{\gamma}$ and $\bY \in \CC^\gamma_{\bX}$ with $\gamma \in (\frac{1}{3}, \frac{1}{2}]$. In the classical setting, we can define an integral $\int_a^b Y_x \otimes \, \d X_x$ as a limit of Riemann sums for $Y \in \CC^\alpha$ and $X \in \CC^\beta$ whose regularities satisfy $\alpha + \beta > 1$. This is a generalization of the Riemann integral, called Young's integral \cite{MR1555421}. \emph{The rough path integral} is defined as 
\begin{equation}
    \int_a^b \bY_x \otimes \d \bX_x \coloneqq \lim_{ |\mathcal{P}| \to 0 } \sum_{[x,x'] \in \mathcal{P} } \Big( Y_x \otimes \delta X_{x,x'} + Y'_x \X_{x,x'} \Big),
    \label{def: rough path integral}
\end{equation}
where $\mathcal{P}$ is a partition of $[a,b]$ with mesh size $|\mathcal{P}|$. See \cite[Theorem~4.10]{MR4174393} for the proof that the limit exists and is independent of the partition. 

At the end, we state a result about the regularity of the rough path integral, which can be found in \cite[Theorem~4.4]{MR4174393}. In the statement of the lemma, we use the matrix-vector multiplication in the integral $\int_x^{x'} G(X_y)\, \d \bX_y$, where $G: \R^n \to \R^{n \times n}$. It can be written in terms of the tensor products \eqref{def: rough path integral}, where the controlled processes $\bY$ are the columns of the matrix $G(X)$, as follows from Lemma~\ref{Gubinelli deriv of G(v+x)}. In this case, we say that $(G(X), D G(X))$ is controlled by $\bX$, where $D G$ is the Jacobian of $G$.

\begin{prop}\label{prop:controlled-function}
    Let $\bX = (X, \X) \in \CD^\gamma( [a,b], \R^n )$ for some $T > 0$ and $\gamma \in (\frac{1}{3}, \frac{1}{2}]$, and let $G: \R^n \to \R^{n \times n}$ be a $\CC^2$ function with bounded derivatives. Then the rough path integral defined in \eqref{def: rough path integral} exists and has the following bound
    \begin{align*}
        &\left| \int_x^{x'} G(X_y)\, \d \bX_y - G(X_x) \delta X_{x,x'} - DG(X_x) \X_{x,x'} \right| \\
          &\qquad\qquad\qquad\qquad\qquad \leq C \|G\|_{\CC^2} \left( \|X\|^3_\gamma + \|X\|_{\gamma} \|\X\|_{2 \gamma} \right) |x'-x|^{3 \gamma},
    \end{align*}
    where the constant $C$ depends on $\gamma$. Moreover, the function $x' \mapsto \int_a^{x'} G(X_y)\, \d \bX_y$ is $\gamma$-H\"{o}lder continuous on $[a, b]$ with the bound
    \begin{equation*}
        \left\| \int_a^{\cdot} G(X_y)\, \d \bX_y \right\|_{\gamma} \leq \tilde C \|G\|_{\CC^2} \left( \vvvert \bX \vvvert_{\gamma} \vee \vvvert \bX \vvvert^{1/\gamma}_{\gamma} \right),
    \end{equation*}
    where the constant $\tilde C$ depends on $b-a$ and $\gamma$.
\end{prop}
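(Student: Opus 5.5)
The plan is to reduce Proposition~\ref{prop:controlled-function} to the general sewing-lemma bound for integrals of controlled rough paths, \cite[Theorem~4.10]{MR4174393}, applied to the controlled path $\bY = (G(X), DG(X))$. Lemma~\ref{Gubinelli deriv of G(v+x)} with $v = 0$, applied column by column, shows that $(G(X), DG(X)) \in \CC^\gamma_{\bX}$. Its remainder satisfies $\|R^G\|_{2\gamma} \le \frac12 \|G\|_{\CC^2} \|X\|_\gamma^2$. We also have $\|DG(X)\|_\gamma \le \|G\|_{\CC^2}\|X\|_\gamma$ and $\sup |DG(X)| \le \|G\|_{\CC^2}$.

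For the local expansion, consider the germ
\[
\Xi_{x,x'} \coloneqq G(X_x)\delta X_{x,x'} + DG(X_x)\X_{x,x'}.
\]
Chen's relation gives, for $x \le y \le x'$,
\[
\delta\Xi_{x,y,x'} \coloneqq \Xi_{x,x'} - \Xi_{x,y} - \Xi_{y,x'} = -R^G_{x,y}\,\delta X_{y,x'} - \delta DG(X)_{x,y}\,\X_{y,x'}.
\]
Hence
\[
|\delta\Xi_{x,y,x'}| \le \bigl(\|R^G\|_{2\gamma}\|X\|_\gamma + \|DG(X)\|_\gamma\|\X\|_{2\gamma}\bigr)|x'-x|^{3\gamma}.
\]
Since $3\gamma > 1$, the sewing lemma shows that the Riemann sums in \eqref{def: rough path integral} converge. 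It also gives
\[
\Bigl|\int_x^{x'} G(X_y)\, \d\bX_y - \Xi_{x,x'}\Bigr| \le C_\gamma \|\delta\Xi\|_{3\gamma}|x'-x|^{3\gamma},
\]
with $C_\gamma = 2^{3\gamma}\zeta(3\gamma)$. Inserting the two bounds above yields the constant $C\|G\|_{\CC^2}(\|X\|_\gamma^3 + \|X\|_\gamma\|\X\|_{2\gamma})$. This proves the first estimate.

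The Hölder estimate is the main obstacle, because it must be homogeneous in $\vvvert \bX\vvvert_\gamma$. The naive bound
\[
|\Xi_{x,x'}| \le \|G\|_{\CC^2}(\|X\|_\gamma|x'-x|^\gamma + \|\X\|_{2\gamma}|x'-x|^{2\gamma}),
\]
combined with the remainder term, gives $\|G\|_{\CC^2}(\vvvert\bX\vvvert_\gamma + \vvvert\bX\vvvert_\gamma^2 + \vvvert\bX\vvvert_\gamma^3)$ times powers of $b-a$. This is not yet of the stated form.

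I would use the standard scaling argument. Set $h = \vvvert\bX\vvvert_\gamma^{-1/\gamma}$, so that $\vvvert\bX\vvvert_\gamma h^\gamma = 1$.

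\begin{itemize}
\item \textbf{Short intervals, $|x'-x| \le h$.} Each term above is bounded by $\|G\|_{\CC^2}\vvvert\bX\vvvert_\gamma|x'-x|^\gamma$ times a constant. For instance, $\|X\|_\gamma^3|x'-x|^{3\gamma} \le \vvvert\bX\vvvert_\gamma|x'-x|^\gamma (\vvvert\bX\vvvert_\gamma h^\gamma)^2$.
\item \textbf{Long intervals, $|x'-x| > h$.} Split $[x,x']$ into $N \approx |x'-x|/h$ subintervals of length at most $h$. Summing the short-interval bounds gives
\[
N\|G\|_{\CC^2}\vvvert\bX\vvvert_\gamma h^\gamma \lesssim \|G\|_{\CC^2}|x'-x|\,h^{-1} = \|G\|_{\CC^2}|x'-x|\,\vvvert\bX\vvvert_\gamma^{1/\gamma}.
\]
Then $|x'-x| \le (b-a)^{1-\gamma}|x'-x|^\gamma$.
\end{itemize}

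Combining the two regimes gives the bound $\tilde C\|G\|_{\CC^2}(\vvvert\bX\vvvert_\gamma \vee \vvvert\bX\vvvert_\gamma^{1/\gamma})$, with $\tilde C$ depending on $\gamma$ and $b-a$.
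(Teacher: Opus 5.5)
Your proposal is correct and follows the standard argument behind the paper's treatment. The paper gives no proof of its own and simply cites the sewing-lemma results of \cite[Theorems~4.4 and~4.10]{MR4174393}; your computation of $\delta\Xi$ via Chen's relation, together with the subdivision at scale $h=\vvvert\bX\vvvert_\gamma^{-1/\gamma}$ (using additivity of the sewn integral), is how those bounds, including the homogeneous H\"older estimate, are derived.
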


\section{Fractional heat equation} \label{sec: fractional heat eq}

Let us first analyze a linearized version of equation \eqref{eq:FSBL}, i.e., the fractional heat equation 
\begin{equation}
    \partial_t^{\beta} \psi(t, x) = - (-\Delta)^{\alpha/2} \psi(t, x) - (-\Delta)^{\delta/2} I_t^{1-\beta} \Pi_0 \xi(t, x). \label{eq:FSH}
\end{equation} 
for $(t,x) \in \R_+ \times \T$ with a suitable initial condition $\psi(0, \cdot) = \psi_0 (\cdot)$. Here, $I_t^{1-\beta}$ is the fractional integral \eqref{eq:RL-integral} applied to the time variable $t$, and $\Pi_0$ is the orthogonal projection onto the non-zero frequency Fourier space, i.e.,
\[
\Pi_0 f(x) := \displaystyle\sum_{k \in \Z \setminus \{0\}} \widehat{f}(k) e^{ikx}.
\] 
From Appendix~\ref{sec:Duhamel}, the solution can be written using a fractional Duhamel's principle as 
\begin{equation*}
    \psi(t, x) = \int_\T P_{\beta, \alpha}(t, x-y) \psi_0(y) \, \d y - \int_0^t \int_{\T} P_{\beta, \alpha}(t-s, x-y) (-\Delta)^{\delta/2} \Pi_0 \xi(s, y) \, \d y\, \d s,
\end{equation*}
where $P_{\beta, \alpha}(t, x)$ is the fractional heat kernel, which is defined via its Fourier transform as 
\begin{equation}\label{eq:heat-kernel}
\widehat{P}_{\beta, \alpha}(t, k) = \frac{1}{\sqrt{2\pi}} E_{\beta}(- t^\beta |k|^\alpha),  
\end{equation}
where $E_{\beta}$ is the Mittag--Leffler function given by its Taylor series \cite{gorenflo2020mittag}
\begin{equation}\label{eq:ML}
E_{\beta}(x) = \sum_{j=0}^\infty \frac{x^{j}}{\Gamma(1+ \beta j)}.
\end{equation}
We note that for $\beta=1$ and $\alpha=2$ we have $E_{1}(x) = e^x$ and $\widehat{P}_{1, 2}(t, k) = \frac{1}{\sqrt{2\pi}} e^{-t k^2}$, so that $P_{1, 2}$ is the standard heat kernel. As follows from Lemma~\ref{L^1-bound of P} with $\ell_\star= 1$, if $\alpha < 2$ then $P_{\beta, \alpha}(t,x)$ is not differentiable at $x=0$ even when $t>0$. 

In contrast to the standard heat kernel, the convolution with $P_{\beta, \alpha}$ does not give a strong smoothing effect. This is due to a very slow decay of the Mittag--Leffler function $E_{\beta}$ at negative infinity. We demonstrate this in Figure~\ref{fig:ML function}, where we plot the function for several values of the parameter $\beta$. As you can see, there is a significant difference in the behavior of the function in the cases $\beta=1$ and $\beta < 1$. When $\beta=1$, the function $E_{1}(-x)$ decays exponentially as $x \to \infty$, whereas for $\beta < 1$ it decays only polynomially. More precisely, we have the following bound, which follows from the integral representation of the function \cite{gorenflo2020mittag}:

\begin{lemma}\label{lem:ML-bound}
For any $\beta \in (0,1)$ and integer $k \geq 0$ there is a constant $C > 0$ such that 
\begin{equation}\label{eq:ML-bound}
\bigl| E^{(k)}_{\beta} (-x)  \bigr| \leq \frac{C}{1+ x^{k+1}}
\end{equation}
uniformly over $x \geq 0$.
\end{lemma}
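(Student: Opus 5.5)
The plan is to work from the Hankel-contour integral representation of the Mittag--Leffler function rather than a Laplace-type representation in $x$. Laplace-type representations naturally describe $E_\beta(-t^\beta)$, and for $E_\beta(-x)$ they hide the $x^{-1}$ decay behind cancellations. For $x$ in a compact set $[0,1]$ the bound is trivial: $E_\beta$ is entire by \eqref{eq:ML}, so $E_\beta^{(k)}$ is continuous and hence bounded on $[-1,0]$. It therefore suffices to prove
\[
\bigl|E_\beta^{(k)}(-x)\bigr| \le C x^{-(k+1)} \qquad \text{for } x\ge 1.
\]

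The starting point is the classical representation (see \cite{gorenflo2020mittag})
\[
E_\beta(z) = \frac{1}{2\pi i}\int_{\mathcal H} \frac{e^{s} s^{\beta-1}}{s^\beta - z}\,\d s ,
\]
where $\mathcal H$ is a Hankel-type contour. Concretely, $\mathcal H$ runs along the ray $\arg s=-\theta$ from $\infty e^{-i\theta}$ to the circle $|s|=1$, follows that circle counterclockwise, and returns to infinity along the ray $\arg s = \theta$. The angle is fixed with $\frac{\pi}{2}<\theta<\pi$, and the formula holds for every $z$ lying to the left of the image curve $s^\beta$, $s\in\mathcal H$. For $z=-x$ with $x\ge1$, the point $-x$ has argument $\pi$. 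The image $s^\beta$ of $\mathcal H$ has arguments in $[-\beta\theta,\beta\theta]$, and $\beta\theta<\pi$ since $\beta<1$. Hence $-x$ lies strictly on the correct side of the contour for all $x \geq 0$, and no residue from a pole $s^\beta=-x$ appears (the would-be pole $s=x^{1/\beta}e^{\pm i\pi/\beta}$ lies off the principal sheet). Differentiating $k$ times under the integral, which is justified by the uniform bounds below, gives
\[
E_\beta^{(k)}(-x) = \frac{k!}{2\pi i}\int_{\mathcal H} \frac{e^{s} s^{\beta-1}}{(s^\beta + x)^{k+1}}\,\d s .
\]

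The key geometric estimate, which I expect to be the main (though elementary) obstacle, is
\[
|s^\beta + x| \ge c\,(|s|^\beta + x) \qquad \text{uniformly for } s\in\mathcal H,\ x\ge0 .
\]
This holds because $s^\beta$ lies in the closed sector $|\arg w|\le\beta\theta$ with $\beta\theta<\pi$, while $x$ lies on the positive real axis. The angle between the two vectors is at most $\beta\theta<\pi$, so the law of cosines gives $|w+x|^2\ge (1-\cos(\pi-\beta\theta))\,(|w|^2+x^2)$.

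Granting this, I split $\mathcal H$ into its two rays and the unit circle. On the circle, $|e^s s^{\beta-1}|\le e$, so that piece contributes at most $C x^{-(k+1)}$. On the rays, $|e^s|=e^{|s|\cos\theta}$ decays exponentially because $\cos\theta<0$. That piece is then bounded by
\[
C x^{-(k+1)}\int_1^\infty e^{r\cos\theta} r^{\beta-1}\,\d r \le C x^{-(k+1)} .
\]
Combining this with the trivial bound on $[0,1]$ yields \eqref{eq:ML-bound} with a constant depending only on $\beta$, $k$ and the choice of $\theta$.
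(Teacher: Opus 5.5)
Your proof is correct and follows the route the paper intends. The paper gives no argument beyond saying the bound follows from the integral representation in \cite{gorenflo2020mittag}, and your Hankel-contour estimate supplies exactly those details.

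One small slip: in the law-of-cosines step the constant should be $\min\{1,\,1+\cos(\beta\theta)\}$, not $1-\cos(\pi-\beta\theta)=1+\cos(\beta\theta)$. The latter exceeds $1$ when $\beta\theta<\frac{\pi}{2}$, which your choice $\theta\in(\frac{\pi}{2},\pi)$ allows for small $\beta$. In that case the stated inequality fails on the rays whenever $r^\beta\neq x$. With the corrected constant, $|s^\beta+x|\ge c(|s|^\beta+x)$ still holds and the rest of your argument goes through unchanged.
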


In particular, when $k=0$, the Mittag--Leffler function has the bounds \cite[Theorem~4]{MR3164769} 
\begin{equation*}
    \frac{1}{ 1+ \Gamma(1-\beta) x } \leq E_\beta(-x) \leq \frac{1}{ 1+ \Gamma(1+\beta) x }
\end{equation*}
for $x \geq 0$.
\begin{figure}[h]
    \centering
    \includegraphics[width=0.7\linewidth]{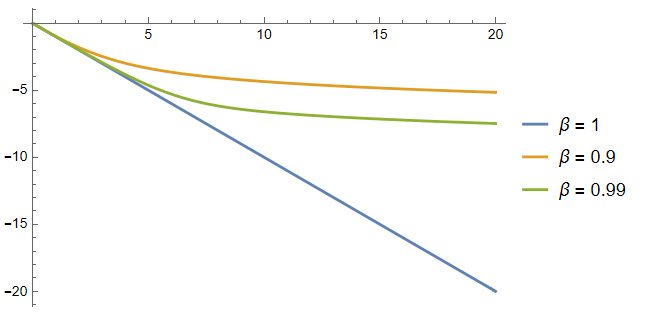}

    \caption{A plot of the logarithm of the Mittag--Leffler function $E_{\beta}(-x)$ for different values of $\beta$.} 
    \label{fig:ML function}
\end{figure}

We aim to compute the regularity of the solution to the fractional heat equation \eqref{eq:FSH}. For this, we need to study properties of the fractional heat kernel $P_{\beta, \alpha}$, which boils down to studying properties of the Mittag--Leffler functions. 

\subsection{Mittag--Leffler functions and fractional calculus} 

We start by proving a relation between Mittag--Leffler functions and fractional integrals.

\begin{lemma}\label{lem:LM-integral}
    Let $\beta \in (0,1)$. 
    For any $s< t$ and $\lambda > 0$ one has 
    \begin{equation*}
        I^{\beta}_{t-, s} E_{\beta}(- \lambda (t-s)^\beta) = - \frac{1}{\lambda} \left( E_{\beta}(- \lambda (t-s)^\beta) -1 \right).
    \end{equation*}
    This identity also holds for $\lambda = 0$, by computing the limit
    \begin{equation}\label{eq:LM-integral-2}
        \lim_{\lambda \searrow 0} I^{\beta}_{t-, s} E_{\beta}(- \lambda (t-s)^\beta) = \frac{(t-s)^\beta}{\Gamma ( 1+ \beta ) }.
    \end{equation}
\end{lemma}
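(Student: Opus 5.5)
The plan is a direct computation from the Taylor series \eqref{eq:ML}, integrated term by term. Recall that $I^{\beta}_{t-, s}$ denotes the right-side Riemann--Liouville integral \eqref{eq:RL-integral-minus} in the variable $s$, ending at $t$:
\begin{equation*}
I^{\beta}_{t-, s} f(s) = \frac{1}{\Gamma(\beta)} \int_s^t \frac{f(r)}{(r-s)^{1-\beta}}\, \d r, \qquad f(r) = E_\beta(-\lambda (t-r)^\beta).
\end{equation*}
I would first expand
\begin{equation*}
f(r) = \sum_{j \geq 0} \frac{(-\lambda)^j (t-r)^{\beta j}}{\Gamma(1+\beta j)}.
\end{equation*}
The series converges absolutely and uniformly on $r \in [s,t]$, since $E_\beta$ is entire. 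This justifies exchanging the sum with the integral, because the kernel $(r-s)^{\beta-1}$ is integrable.

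For each term I would use the Beta integral, via the substitution $r = s + u(t-s)$:
\begin{equation*}
\frac{1}{\Gamma(\beta)} \int_s^t (r-s)^{\beta-1} (t-r)^{\beta j}\, \d r = \frac{\Gamma(1+\beta j)}{\Gamma(1+\beta j + \beta)}\, (t-s)^{\beta(j+1)}.
\end{equation*}
The factor $\Gamma(1+\beta j)$ cancels with the Taylor coefficient, which gives
\begin{equation*}
I^{\beta}_{t-, s} f(s) = \sum_{j \geq 0} \frac{(-\lambda)^j (t-s)^{\beta(j+1)}}{\Gamma(1+\beta(j+1))}.
\end{equation*}
Reindexing with $m = j+1$ turns this into
\begin{equation*}
-\frac{1}{\lambda} \sum_{m \geq 1} \frac{(-\lambda (t-s)^\beta)^m}{\Gamma(1+\beta m)} = -\frac{1}{\lambda}\left(E_\beta(-\lambda(t-s)^\beta) - 1\right),
\end{equation*}
which is the claimed identity.

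For the limit \eqref{eq:LM-integral-2}, I would not take the limit in the closed-form expression. Instead I would use the series form obtained just before dividing by $\lambda$. Every term with $j \geq 1$ carries a factor $\lambda^j$ and the sum is uniformly controlled for $\lambda \in (0,1]$, so only the $j=0$ term survives as $\lambda \searrow 0$, namely $(t-s)^\beta/\Gamma(1+\beta)$. An equivalent route is to note that $-(E_\beta(-\lambda x)-1)/\lambda \to E_\beta'(0)\,x = x/\Gamma(1+\beta)$ with $x = (t-s)^\beta$.

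The only step needing care is justifying the exchange of summation and integration, and that is routine given the dominating bound $\sum_j \lambda^j (t-s)^{\beta j}/\Gamma(1+\beta j) < \infty$. I do not expect a genuine obstacle here.
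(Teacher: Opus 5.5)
Your proposal is correct and follows the paper's proof essentially verbatim: you integrate the Mittag--Leffler series term by term, evaluate each term with the Beta integral $I^{\beta}_{t-,s}\bigl((t-s)^{\beta j}\bigr) = \frac{\Gamma(1+\beta j)}{\Gamma(1+(j+1)\beta)}(t-s)^{\beta(j+1)}$, and reindex, and your second route to the limit via $E_\beta'(0) = 1/\Gamma(1+\beta)$ is exactly the paper's argument. Your explicit justification of the sum--integral exchange, and your series-based limit argument, fill in details the paper leaves implicit.
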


\begin{proof}
    Using the definition of Riemann--Liouville integral \eqref{eq:RL-integral-minus}, we get for any $s < t$ and integer $j \geq 0$
    \begin{align*}
        I^{\beta}_{t-, s} ( (t-s)^{\beta j} ) 
        &= \frac{1}{\Gamma(\beta)} \int_s^t \frac{(t-r)^{\beta j}}{(r-s)^{1-\beta}} \, \d r = \frac{1}{\Gamma(\beta)} \int_0^{t-s} \frac{(t-s- r)^{\beta j}}{r^{1-\beta}} \, \d r \\
        &= \frac{(t-s)^{\beta (j+1)}}{\Gamma(\beta)} B(\beta, \beta j + 1) = \frac{\Gamma(1+\beta j)}{\Gamma(1+ (j+1)\beta)} (t-s)^{\beta (j+1)},
    \end{align*}
    where $B$ is the beta function. Recalling the definition \eqref{eq:ML}, we use the just-proved identity to get 
    \begin{align*}
        I^{\beta}_{t-, s} E_{\beta}(- \lambda (t-s)^\beta) 
        &= \sum_{j=0}^\infty I^{\beta}_{t-, s} \left( \frac{( -\lambda (t-s)^\beta)^{j}}{\Gamma(1+ \beta j)} \right) = \sum_{j=0}^\infty \frac{(-\lambda)^j}{\Gamma(1+ \beta j)} I^{\beta}_{t-, s} ( (t-s)^{\beta j}) \\
        &= \sum_{j=0}^\infty \frac{(-\lambda)^j}{\Gamma(1+ (j+1)\beta)} (t-s)^{\beta (j+1)} = - \frac{1}{\lambda} \left( E_{\beta}(- \lambda (t-s)^\beta ) -1 \right).
    \end{align*}
    Using $E_{\beta}(0) = 1$, the limit \eqref{eq:LM-integral-2} equals $(t-s)^\beta E_{\beta}'(0) = \frac{(t-s)^\beta}{\Gamma ( 1+ \beta ) }$.
\end{proof}

As a corollary of the preceding lemma, we can compute a Riemann--Liouville derivative of the fractional heat kernel \eqref{eq:heat-kernel}.

\begin{lemma}\label{lem:frac_D_of_ML}
Let $\beta \in (0,1)$ and $\alpha > 0$. For any $s< t$ and $k \in \Z$ one has
    \begin{equation*}
        \RL^{1-\beta}_{t-, s} E_{\beta}( -(t-s)^\beta |k|^\alpha ) = \frac{\beta}{(t-s)^{1-\beta}} E'_{\beta}(-(t-s)^\beta |k|^\alpha).
    \end{equation*}
\end{lemma}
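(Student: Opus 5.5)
The plan is to reduce the statement to Lemma~\ref{lem:LM-integral} by writing the right-side Riemann--Liouville derivative as minus the derivative of a right-side fractional integral. By \eqref{eq:RL-derivative} with order $1-\beta$, we have
\[
\RL^{1-\beta}_{t-, s} f(s) = - \partial_s \bigl( I^{\beta}_{t-, s} f \bigr)(s),
\]
since $1-(1-\beta) = \beta$. Set $\lambda = |k|^\alpha$ and $f(s) = E_\beta(-\lambda (t-s)^\beta)$.

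First consider $k \neq 0$, so $\lambda > 0$. Lemma~\ref{lem:LM-integral} gives
\[
I^{\beta}_{t-, s} f = -\tfrac{1}{\lambda}\bigl(E_\beta(-\lambda(t-s)^\beta) - 1\bigr).
\]
We differentiate this in $s$. The constant term drops out, and by the chain rule
\[
\partial_s E_\beta(-\lambda(t-s)^\beta) = E_\beta'(-\lambda(t-s)^\beta)\cdot \lambda\beta (t-s)^{\beta-1}.
\]
Hence
\[
-\partial_s I^{\beta}_{t-,s} f = \tfrac{1}{\lambda}\cdot \lambda \beta (t-s)^{\beta-1} E'_\beta(-\lambda (t-s)^\beta),
\]
which is exactly the claimed $\beta (t-s)^{\beta-1} E'_\beta(-(t-s)^\beta|k|^\alpha)$. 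Differentiability in $s$ for $s<t$ is clear because $E_\beta$ is entire, so no extra justification is needed beyond this.

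The case $k=0$ needs separate treatment. Here $f\equiv 1$, so
\[
I^\beta_{t-,s}1 = \frac{(t-s)^\beta}{\Gamma(1+\beta)},
\]
which matches \eqref{eq:LM-integral-2} or follows by direct computation. Differentiating and negating gives $\beta (t-s)^{\beta-1}/\Gamma(1+\beta)$. This agrees with the formula because $E'_\beta(0) = 1/\Gamma(1+\beta)$ from the series \eqref{eq:ML}.

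The only mildly delicate point is the sign and variable convention. $I^\beta_{t-,s}$ acts on the variable $s$ with endpoint $t$, so I would double-check that the operator in \eqref{eq:RL-derivative} is applied consistently with how Lemma~\ref{lem:LM-integral} was used. Apart from that, the argument is a direct computation.
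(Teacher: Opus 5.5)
Your proof is correct and follows the paper's argument: you rewrite $\RL^{1-\beta}_{t-,s}$ as $-\partial_s I^{\beta}_{t-,s}$, apply Lemma~\ref{lem:LM-integral} for $k\neq 0$, and differentiate with the chain rule. The only difference is at $k=0$: you compute $I^{\beta}_{t-,s}1$ directly and use $E'_\beta(0)=1/\Gamma(1+\beta)$, whereas the paper passes to the limit $k\to 0$; your version is slightly cleaner, since $k$ ranges over the integers.
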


\begin{proof}
    By the definitions \eqref{eq:RL-derivative} we have 
    \begin{align*}
        \RL^{1-\beta}_{t-, s} E_{\beta}( -(t-s)^\beta |k|^\alpha ) = - (\partial_s I^\beta_{t-, s}) E_{\beta}(-(t-s)^\beta |k|^\alpha).
    \end{align*}
    We use furthermore Lemma~\ref{lem:LM-integral} to compute for $k \neq 0$
    \begin{align*}
        (\partial_s I^\beta_{t-, s}) E_{\beta}(-(t-s)^\beta |k|^\alpha) &= - \frac{1}{|k|^\alpha} \partial_s E_{\beta}(- (t-s)^\beta |k|^\alpha) \\
        &= -\beta (t-s)^{\beta-1} E'_{\beta}(- (t-s)^\beta |k|^\alpha).
    \end{align*}
    These formulas are defined for all non-zero $k \in \R$. Taking the limit $k \to 0$ and using \eqref{eq:LM-integral-2}, we get the desired formula.
\end{proof}

\subsection{Stationary solution of the linear equation} 
\label{sec: stationary sol}

The fractional Duhamel's principle, described in Appendix~\ref{sec:Duhamel}, yields that the function
\begin{equation*}
    \psi(t, x) \coloneqq \int_{-\infty}^t \int_{-\pi}^{\pi} P_{\beta, \alpha}(t-s, x-y) \, \partial_s^{1-\beta} \left( - (-\Delta)^{\delta/2} I_s^{1-\beta} \Pi_0 \xi(s, y) \right)  \d y\,\d s
\end{equation*}
is a stationary solution of \eqref{eq:FSH}. As pointed out in Appendix~\ref{sec:Duhamel}, we use the identity \eqref{eq:derivative-of-integral} and interpret the right-hand side as 
    \begin{equation}\label{eq:stationary_sol}
        \psi(t,x) 
        = - \int_{-\infty}^t \int_{-\pi}^{\pi} P_{\beta, \alpha}(t-s, x-z) \, (-\Delta)^{\delta/2} \Pi_0 \xi(s, z)\,\d z\, \d s.
    \end{equation}
The well-posedness of this expression can be verified by computing its covariance. 

\begin{lemma}
    The function \eqref{eq:stationary_sol} is a stationary solution of the equation \eqref{eq:FSH} and its covariance function is 
    \begin{equation}\label{eq:psi-covariance}
        \E \big[ \psi(t,x) \otimes \psi(t',x') \big] = \left( \frac{1}{\pi} \sum_{k=1}^\infty k^{2 \delta - \frac{\alpha}{\beta}} \fC_\beta(k^{\alpha / \beta}(t' - t)) \cos(k(x-x')) \right) \Id,
    \end{equation}
    for all $t \leq t'$ and $x, y \in \T$, where 
    \begin{equation}\label{eq:psi-covariance-function-constant}
    \fC_\beta(z) \coloneqq \int^{\infty}_{0} E_{\beta}(- s^\beta) E_{\beta}(- (z+s)^\beta) \,\d s.
    \end{equation}
\end{lemma}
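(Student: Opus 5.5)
Work in Fourier variables, where the whole computation reduces to an Itô isometry. Write $\xi$ as a Fourier series in space:
\[
\xi(s,z) = \frac{1}{\sqrt{2\pi}}\sum_{k\in\Z} e^{ikz}\, \d W_k(s),
\]
where $(W_k)$ are complex $\R^n$-valued Brownian motions with $W_{-k}=\overline{W_k}$, independent up to this constraint, and with $\E[\d W_k \otimes \overline{\d W_{k'}}] = \1{k=k'}\Id\,\d s$.

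\emph{Fourier coefficients of $\psi$.} Since $\Pi_0$ removes the mode $k=0$ and $(-\Delta)^{\delta/2}$ multiplies the $k$-th mode by $|k|^\delta$, \eqref{eq:heat-kernel} and \eqref{eq:stationary_sol} give
\[
\widehat\psi(t,k) = -|k|^\delta \int_{-\infty}^t E_\beta\bigl(-(t-s)^\beta|k|^\alpha\bigr)\,\d W_k(s), \qquad k\neq 0 .
\]
(The $\sqrt{2\pi}$ factors from $\widehat P_{\beta,\alpha}$ and the convolution cancel.) Well-posedness of this Wiener integral is part of what must be shown. By Lemma~\ref{lem:ML-bound} with $k=0$, we have $E_\beta(-x)\lesssim (1+x)^{-1}$. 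Substituting $r=|k|^{\alpha/\beta}(t-s)$, the $L^2$ norm of the integrand is
\[
|k|^{2\delta-\alpha/\beta}\int_0^\infty E_\beta(-r^\beta)^2\,\d r .
\]
This integral is finite exactly because $2\beta>1$, and this is where Assumption~\ref{assump:parameters} ($\beta>\frac12$) enters.

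\emph{Covariance.} For $t\le t'$, independence of modes gives
\[
\E\bigl[\widehat\psi(t,k)\otimes\overline{\widehat\psi(t',k)}\bigr] = |k|^{2\delta}\int_{-\infty}^t E_\beta\bigl(-(t-s)^\beta|k|^\alpha\bigr)E_\beta\bigl(-(t'-s)^\beta|k|^\alpha\bigr)\,\d s\;\Id .
\]
Substitute $s\mapsto t-s$ and then rescale by $|k|^{\alpha/\beta}$. This turns the integral into $|k|^{2\delta-\alpha/\beta}\,\fC_\beta\bigl(|k|^{\alpha/\beta}(t'-t)\bigr)$, with $\fC_\beta$ as in \eqref{eq:psi-covariance-function-constant}. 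Summing $\frac{1}{2\pi}\sum_{k\neq0}e^{ik(x-x')}(\cdots)$ and pairing $\pm k$ produces the cosine series with prefactor $\frac1\pi$, which is \eqref{eq:psi-covariance}. The series converges absolutely because $|\fC_\beta|\le\fC_\beta(0)<\infty$ and $2\delta-\alpha/\beta=-2\gamma_0-1<-1$. The covariance depends only on $t'-t$, and $\psi$ is a centered Gaussian field, so $\psi$ is stationary.

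\emph{Solution property.} This is the step I expect to be the main obstacle; I would verify it mode by mode in a weak sense. Each mode $\widehat\psi(\cdot,k)$ should satisfy
\[
\partial^\beta_t\widehat\psi = -|k|^\alpha\widehat\psi - |k|^\delta I^{1-\beta}_t\,\d W_k/\d t .
\]
Rigorously, I would apply $I^\beta$ (on $[t_0,t]$, with initial value $\widehat\psi(t_0,k)$) and check the integrated identity
\[
\widehat\psi(t)-\widehat\psi(t_0) = -|k|^\alpha I^\beta\widehat\psi - |k|^\delta (W_k(t)-W_k(t_0)) + \text{(memory term from }(-\infty,t_0)).
\]
The verification uses stochastic Fubini and Lemma~\ref{lem:LM-integral}, which gives
\[
I^\beta E_\beta(-\lambda(\cdot)^\beta) = \lambda^{-1}\bigl(1-E_\beta(-\lambda(\cdot)^\beta)\bigr).
\]
Alternatively, and more simply, I would invoke the fractional Duhamel principle of Appendix~\ref{sec:Duhamel} together with \eqref{eq:derivative-of-integral}, which is how \eqref{eq:stationary_sol} was derived. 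Then the new content of the proof reduces to the convergence and covariance computations above.
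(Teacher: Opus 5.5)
Your proposal is correct and follows the paper's argument. It uses It\^o's isometry, applied mode by mode to complex Brownian motions where the paper applies it in physical space and then uses Parseval. It then makes the substitution that turns the time integral into $|k|^{2\delta-\alpha/\beta}\fC_\beta(|k|^{\alpha/\beta}(t'-t))$, gets stationarity from Gaussianity and the time-translation invariance of the covariance, and takes the solution property from the fractional Duhamel principle of Appendix~\ref{sec:Duhamel}.

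On the memory term you flagged in the direct verification: it does not vanish when $\partial_t^\beta$ and $I_t^{1-\beta}$ are based at $0$, because $E_\beta(-\lambda(t+u)^\beta)\neq E_\beta(-\lambda t^\beta)E_\beta(-\lambda u^\beta)$ for $\beta<1$. So the claim that \eqref{eq:stationary_sol} solves \eqref{eq:FSH} has to be read with these operators based at $-\infty$, a point the paper's appeal to Appendix~\ref{sec:Duhamel} also leaves implicit.
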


We note that the bound \eqref{eq:ML-bound} on the Mittag--Leffler function and our assumption $\beta \in (\frac{1}{2}, 1)$ guarantee that the integral in \eqref{eq:psi-covariance-function-constant} converges. The sum in \eqref{eq:psi-covariance} is convergent due to $\frac{\alpha}{\beta} - 2 \delta > 1$, which follows from our assumptions on the parameters. 

\begin{proof}
    Because the noise is independent across components, it suffices to compute the diagonal components. Parseval's identity and the definitions of the involved operators allow us to write it as 
    \begin{equation*}
        - \int_{-\infty}^t \int_{-\pi}^{\pi} (-\Delta)^{\delta/2} \Pi_0 P_{\beta, \alpha}(t-s, x-z) \, \xi(s, z) \,\d z\, \d s.
    \end{equation*}
    Therefore, for $\psi = ( \psi^j )_{j = 1,\, \dots ,\, n}$, It\^{o}'s isometry yields 
\begin{align*}
        &\E \big[ \psi^j (t,x) \psi^j (t',x') \big] \\
          &\qquad = \int_{-\infty}^{t} \int_{-\pi}^{\pi} (-\Delta)^{\delta/2} \Pi_0 P_{\beta, \alpha}(t-s, x-z) (-\Delta)^{\delta/2} \Pi_0 P_{\beta, \alpha}(t'-s, x'-z) \, \d z\, \d s
\end{align*}
for $t \leq t'$. Parseval's identity allows us to write it as 
\begin{align*}
    &\int_{-\infty}^{t} \sum_{k \in \Z \setminus \{0\}} \left( - |k|^{\delta} \right) e^{-ikx} \widehat{P}_{\beta, \alpha} (t-s, -k) \overline{\left( - |k|^{\delta} \right) e^{-ik x'} \widehat{P}_{\beta, \alpha} (t'-s, -k)} \,\d s \\
      &\qquad = \frac{1}{2\pi} \int_{-\infty}^{t} \sum_{k \in \Z \setminus \{0\}} e^{-ik(x-x')} |k|^{2 \delta} E_{\beta}(- (t-s)^\beta |k|^\alpha) E_{\beta}(- (t'-s)^\beta |k|^\alpha) \,\d s,
\end{align*}
where in the last identity we used formula \eqref{eq:heat-kernel}. The imaginary part of this sum vanishes by antisymmetry. Changing the integration variable, we get \eqref{eq:psi-covariance}.

    Because the expectation of the process vanishes and the covariance function \eqref{eq:psi-covariance} depends on the difference of the time points, the process $\psi$ is stationary in time.
\end{proof}

Now, we are going to estimate the regularity of the process $\psi$.

\begin{lemma}\label{lem:reg-of-stationary-sol}
    For any $\gamma_1, \gamma_2 \geq 0$ such that $\frac{\alpha}{\beta} \gamma_1 + \gamma_2 < \gamma_0$ (where the constant $\gamma_0$ is defined in \eqref{eq:gamma-0}) and for any $T > 0$ and $p \geq 1$ there is a constant $C = C(\gamma_1, \gamma_2, T, p) > 0$ such that
    \begin{equation}\label{eq:psi-regularity}
        \E \| \psi\|^p_{\CC^{\gamma_1} ([0, T]; \CC^{\gamma_2})} \leq C.
    \end{equation}
\end{lemma}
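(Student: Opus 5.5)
The plan is the standard Gaussian route: bound second moments of increments using the covariance formula \eqref{eq:psi-covariance}, upgrade to all $p$ by Gaussianity, and conclude with a Kolmogorov-type criterion in space-time. The parameters work out because of the scaling $t\sim |k|^{-\alpha/\beta}$ built into the argument of $\fC_\beta$.

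\emph{Step 1 (second moments).} Since $\psi$ is a centered Gaussian field, it suffices to bound second moments of space-time increments; the $p$-th moments then follow by hypercontractivity. Working mode by mode from \eqref{eq:psi-covariance}, I would write
\begin{equation*}
\E|\psi(t,x)-\psi(s,y)|^2 \lesssim \sum_{k\ge1} k^{2\delta-\frac{\alpha}{\beta}}\Bigl( \bigl(\fC_\beta(0)-\fC_\beta(k^{\alpha/\beta}|t-s|)\bigr) + \fC_\beta(0)\bigl(1-\cos(k(x-y))\bigr)\Bigr).
\end{equation*}
The cosine term is bounded by $\min(1,k^2|x-y|^2)$. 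For the time term I need $|\fC_\beta(0)-\fC_\beta(z)|\lesssim \min(1,z)$. This follows from \eqref{eq:psi-covariance-function-constant}: the derivative $\fC_\beta'(z)=-\beta\int_0^\infty E_\beta(-s^\beta)E'_\beta(-(z+s)^\beta)(z+s)^{\beta-1}\,\d s$ is integrable and bounded by Lemma~\ref{lem:ML-bound}, using $\beta>\frac12$ so that $\int (1+s^\beta)^{-1}(1+s^{2\beta})^{-1}s^{\beta-1}\,\d s<\infty$.

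\emph{Step 2 (summation).} Interpolating, $\min(1,z)\le z^{2\theta}$ for $\theta\in[0,\frac12]$. The time part is then bounded by
\begin{equation*}
|t-s|^{2\theta}\sum_k k^{2\delta-\frac{\alpha}{\beta}+\frac{2\alpha\theta}{\beta}},
\end{equation*}
which is finite iff $\frac{\alpha}{\beta}\theta<\gamma_0$. Likewise the space part gives $|x-y|^{2\eta}$ for $\eta<\gamma_0$.

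For the mixed norm $\CC^{\gamma_1}([0,T];\CC^{\gamma_2})$ one needs more than separate Hölder continuity. I would bound the spatial increments of the time increment: for $\Delta_{s,t}\psi=\psi_t-\psi_s$,
\begin{equation*}
\E|\Delta_{s,t}\psi(x)-\Delta_{s,t}\psi(y)|^2\lesssim \sum_k k^{2\delta-\frac{\alpha}{\beta}}\min(1,k^{\alpha/\beta}|t-s|)\min(1,k^2|x-y|^2).
\end{equation*}
Using $\min(1,a)\min(1,b)\le a^{2\theta}b^{\eta}$ with $\frac{\alpha}{\beta}\theta+\eta<\gamma_0$, this is $\lesssim |t-s|^{2\theta}|x-y|^{2\eta}$. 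Here I am using the stationary structure: the time-increment covariance per mode is $2k^{2\delta-\alpha/\beta}(\fC_\beta(0)-\fC_\beta(k^{\alpha/\beta}|t-s|))$, multiplied by the spatial factor $1-\cos(k(x-y))$.

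\emph{Step 3 (Kolmogorov).} With these Gaussian moment bounds, I would apply the Kolmogorov/Garsia--Rodemich--Rumsey criterion: first in $x$ to get $\E\|\Delta_{s,t}\psi\|^p_{\CC^{\gamma_2}}\lesssim|t-s|^{p\theta}$ for $\gamma_2<\eta$ and $p$ large, and then in $t$ for the Banach-valued process, giving $\gamma_1<\theta$. The $\sup_t\|\psi_t\|_{\CC^{\gamma_2}}$ term follows from stationarity plus the increment bound. Choosing $\theta,\eta$ with $\gamma_1<\theta$, $\gamma_2<\eta$ and $\frac\alpha\beta\theta+\eta<\gamma_0$ is possible by the strict inequality assumed.

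\emph{Main obstacle.} I expect the hardest part to be the bound on $\fC_\beta(0)-\fC_\beta(z)$, since this is where the slow decay of the Mittag--Leffler function and the condition $\beta>\frac12$ enter. The mixed-norm Kolmogorov step is standard but needs care with the exponents.
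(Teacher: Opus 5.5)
Your proposal is correct and follows essentially the same route as the paper: reduce to second moments by Gaussianity, compute them from the covariance \eqref{eq:psi-covariance} using a bound on $\fC_\beta(0)-\fC_\beta(z)$, check that summability gives exactly the range $\frac{\alpha}{\beta}\gamma_1+\gamma_2<\gamma_0$, and conclude with Kolmogorov's criterion. The differences are only in the details: the paper gets $\fC_\beta(0)-\fC_\beta(z)\lesssim z^{2\beta'}$ (for $\beta'\le\beta/2$) by interpolating two pointwise Mittag--Leffler bounds inside the integral rather than through your Lipschitz bound on $\fC_\beta$, and it treats the mixed norm only by ``interpolation'' of the separate space and time estimates, whereas your exact per-mode product $(\fC_\beta(0)-\fC_\beta(z))(1-\cos(k(x-y)))$ makes that step explicit.
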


\begin{proof}
We are going to use Kolmogorov's continuity theorem \cite{MR4226142}, which requires bounding moments of the process. We note that $\psi$ is Gaussian, and it is sufficient to bound only the second moments. More precisely, we will prove that there exists a constant $L = L(\alpha, \beta, \delta)$ such that
\begin{equation}\label{eq:psi-bound-1}
\E \big[ | \psi(t, x) - \psi(t, x') |^2 \big] \leq L |x-x'|^{2\gamma'}
\end{equation}
and 
\begin{equation}\label{eq:psi-bound-2}
\E \big[ | \psi(t, x) - \psi(t', x) |^2 \big] \leq L |t-t'|^{2 \beta'},
\end{equation}
for any $0 \leq \gamma' < \frac{1}{2}(\frac{\alpha}{\beta} - 2 \delta - 1)$ and $0 \leq \beta' < \frac{\beta}{2 \alpha}(\frac{\alpha}{\beta} - 2 \delta - 1)$ such that $\beta' \leq \frac{\beta}{2}$, and for all spatial and temporal points $x, x' \in \T$ and $t, t' \in [0, T]$. The proportionality constants are independent of the spatial and temporal points. Then Kolmogorov's continuity theorem and interpolation yield \eqref{eq:psi-regularity}. We note that the condition $\beta' \leq \frac{\beta}{2}$ is redundant due to the assumptions on the parameters.

To prove \eqref{eq:psi-bound-1} we use \eqref{eq:psi-covariance} and write  
    \begin{align*}
        \E \big[ | \psi(t, x) - \psi(t, x') |^2 \big] = \fC_\beta(0) \frac{2 n}{\pi} \sum_{k=1}^\infty k^{2 \delta - \frac{\alpha}{\beta}} \bigl(1- \cos(k(x-x'))\bigr),
    \end{align*} 
    where we recall that $\psi$ is $\R^n$-valued. 
     We use the standard bound $1 - \cos(z) \lesssim |z|^{2 \gamma}$ for any $\gamma \in [0, 1]$ to estimate the preceding sum by a constant times 
    \[
    |x-x'|^{2 \gamma} \sum_{k=1}^\infty k^{2 \delta - \frac{\alpha}{\beta} + 2 \gamma},
    \]
    which is convergent if $2 \gamma < \frac{\alpha}{\beta} - 2 \delta - 1$. By our assumptions, we have $\frac{\alpha}{\beta} - 2 \delta > 1$ which yields the desired bound \eqref{eq:psi-bound-1}.

    Similarly, we can prove \eqref{eq:psi-bound-2}. We take $t < t'$ and use \eqref{eq:psi-covariance} to write 
    \begin{equation}\label{eq:psi-moment-in-time}
        \E \big[ | \psi(t, x) - \psi(t', x) |^2 \big] = \frac{2 n}{\pi} \sum_{k=1}^\infty k^{2 \delta - \frac{\alpha}{\beta}} \bigl(\fC_\beta(0) - \fC_\beta(|k|^{\alpha / \beta}(t' - t))\bigr).
    \end{equation}
    Recalling the definition \eqref{eq:psi-covariance-function-constant}, we write 
    \begin{align*}
    \fC_\beta(0) - \fC_\beta(z) &= \int^{\infty}_{0} E_{\beta}(- s^\beta) \bigl(E_{\beta}(- s^\beta) - E_{\beta}(- (z+s)^\beta)\bigr) \,\d s.
    \end{align*}
    We use the mean value theorem, the bound \eqref{eq:ML-bound}, the complete monotonicity of the Mittag--Leffler function \cite{gorenflo2020mittag} along with the elementary inequality $(z+s)^\beta - s^\beta \leq z^\beta$ provided $0 \leq \beta \leq 1$ to obtain
    \begin{equation*}
        E_{\beta}(- s^\beta) - E_{\beta}(- (z+s)^\beta) \lesssim \frac{z^\beta}{1 + s^{2\beta}}.
    \end{equation*}
    On the other hand, monotonicity and the bound \eqref{eq:ML-bound} yield 
    \begin{equation*}
        E_{\beta}(- s^\beta) - E_{\beta}(- (z+s)^\beta) \leq E_{\beta}(- s^\beta) \lesssim \frac{1}{1 + s^{\beta}}.
    \end{equation*}
    Interpolating between these two estimates, we get 
    \begin{equation*}
        \fC_\beta(0) - \fC_\beta(z) \lesssim \int_0^\infty \left( \frac{1}{1+s^\beta} \right)^{ 2 - \frac{2\beta'}{\beta}} \left( \frac{z^\beta}{1+s^{2\beta}} \right)^{ \frac{2\beta'}{\beta} } ds \lesssim z^{2 \beta'},
    \end{equation*}
    for any $0 \leq \beta' \leq \frac{\beta}{2}$ and $\beta > \frac{1}{2}$. Then the sum in \eqref{eq:psi-moment-in-time} is estimated by a constant times
    \begin{align*}
        (t' - t)^{2 \beta'} \sum_{k=1}^\infty k^{2 \delta - \frac{\alpha}{\beta} + \frac{2 \alpha \beta'}{\beta}}.
    \end{align*}
    The latter converges if $\beta' < \frac{\beta}{2 \alpha} (\frac{\alpha}{\beta} - 2 \delta - 1)$, which yields the desired bound \eqref{eq:psi-bound-2}.
\end{proof}

\begin{rem}
    The result of Theorem \ref{lem:reg-of-stationary-sol} demonstrates an interesting phenomenon. In the equation \eqref{eq:FSH}, we take $\delta$ times derivative in space on the forcing term (the space-time white noise), so it is no surprise that the regularity of $\psi$ will decrease by $\delta$ in space. However, the interesting part lies in the regularity in time. In the equation \eqref{eq:FSH}, we take $\beta$ times derivative in time and $\alpha$ times derivative in space. Therefore, heuristically, taking $\beta$ times derivative in time ``equals" taking $\alpha$ times derivative in space. And we take $\delta$ times derivative in space on the forcing term, which ``equals to" taking $\frac{\beta}{\alpha} \delta$ times derivative in time. And this is exactly the amount of regularity $\psi$ gets decreased in time.
\end{rem}

\subsection{Lift to a rough path}

Lemma~\ref{lem:reg-of-stationary-sol} yields the spatial regularity $\CC^\gamma(\T)$ of the stationary process $\psi$ a.s., where $\gamma$ is as in \eqref{eq:gamma-0}. In this paper, we consider only the interval $\frac{1}{3} < \gamma < \gamma_0 \leq \frac{1}{2}$, i.e., the region where $\psi$ does not have enough regularity to define the nonlinear part in \eqref{eq:FSBL} in the classical sense, and one can define it using the theory of rough paths. 

We start by showing that $\psi(t, \cdot)$ can be lifted to a rough path. We note that we consider the rough paths in the spatial variable $x$, while the time variable $t$ is fixed. This dependence on time will be indicated by the subscript, e.g., $\psi_t$.

\begin{lemma}\label{lem:lift}
    For every $\gamma \in (\frac{1}{3}, \gamma_0)$ and every $t \geq0$, the process $\psi_t$ can be almost surely canonically lifted to a $\gamma$-rough path $\Psi_t = (\psi_t, \uppsi_t)$, so that for any $T > 0$ and any $p \geq 1$ there is a constant $C = C(T, p) > 0$ for which one has 
    \begin{equation}\label{eq:lift-bound}
        \E \big[ \vvvert \Psi \vvvert_{\gamma, T} \big] \leq C.
    \end{equation}
\end{lemma}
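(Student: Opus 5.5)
We will construct the lift for fixed $t$ as the $L^2(\P)$-limit of canonical lifts of smooth approximations, and obtain the uniform-in-time bound from a Kolmogorov-type criterion in the joint variables $(t,x)$.

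For $N \in \N$ let $\psi^N_t = \CF^{-1}[\1{|k|\le N}\widehat\psi_t]$ be the Fourier truncation. Each $\psi^N_t$ is smooth in $x$, so its canonical lift
\[
\uppsi^{N,ij}_{t;x,x'} \coloneqq \int_x^{x'} \delta \psi^{N,i}_{t;x,y}\, \partial_y\psi^{N,j}_t(y)\,\d y
\]
is well defined and satisfies Chen's relation. The components of $\psi$ are independent, centred Gaussian and stationary in space.

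\textbf{Off-diagonal entries ($i\neq j$).} $\uppsi^{N,ij}_{t;x,x'}$ lies in the second Wiener chaos. Using the covariance \eqref{eq:psi-covariance}, its second moment is a double sum
\[
\sum_{k,l} k^{2\delta-\frac\alpha\beta} l^{2\delta-\frac\alpha\beta}\,|I_{k,l}(x,x')|^2,
\]
where $I_{k,l}$ is an explicit oscillatory integral of products of $e^{iky}$. We will bound $|I_{k,l}|\lesssim \min(1,k|x'-x|)\min(1,l|x'-x|)$, up to the usual rewriting via $\int_x^{x'}(e^{iky}-e^{ikx})\,l e^{ily}\,\d y$. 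This gives
\[
\E|\uppsi^{N,ij}_{t;x,x'}|^2 \lesssim |x'-x|^{4\gamma'}
\]
for every $\gamma'<\gamma_0$, uniformly in $N$, exactly as in the proof of \eqref{eq:psi-bound-1}. The same computation with $\sum_{k\text{ or }l>N}$ shows that the sequence is Cauchy in $L^2$, with rate $N^{-\epsilon}$.

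\textbf{Diagonal entries.} No stochastic integral is needed here. We set $\uppsi^{ii}_{t;x,x'} = \tfrac12(\delta\psi^i_{t;x,x'})^2$, which is the limit of the smooth lifts by the chain rule; its symmetric part is thus canonical and has the required $2\gamma$-bound automatically. Gaussian hypercontractivity upgrades these second-moment bounds to all $p$-th moments.

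\textbf{Continuity in $x$.} We then apply the Kolmogorov criterion for rough paths (e.g.\ \cite[Theorem~3.1]{MR4174393}), with $q$ large. This gives $\psi_t\in\CC^\gamma$ and $\uppsi_t\in\CC^{2\gamma}_2$ almost surely, for any $\gamma<\gamma_0$, together with $\E\vvvert\Psi_t\vvvert_\gamma^p\le C$. Convergence of $\Psi^N_t\to\Psi_t$ in the rough path metric, in probability, justifies calling the lift canonical.

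\textbf{Uniform-in-time bound.} To control $\sup_{t\le T}$ in \eqref{eq:lift-bound} we also need time increments. We bound
\[
\E\bigl|\uppsi_{t;x,x'}-\uppsi_{s;x,x'}\bigr|^2 \lesssim |t-s|^{2\theta}|x'-x|^{4\gamma''},
\]
with $\gamma''$ slightly below the target $\gamma'$ and some $\theta>0$. This follows from the time estimate on $\fC_\beta(0)-\fC_\beta(z)\lesssim z^{2\beta'}$ from the proof of Lemma~\ref{lem:reg-of-stationary-sol}, by interpolating: we lose a little spatial regularity to gain time H\"older continuity, just as in \eqref{eq:psi-regularity}. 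A two-parameter Kolmogorov/Garsia--Rodemich--Rumsey argument on $[0,T]\times\T^2$ then yields a continuous version in $t$ with $\E\sup_{t\le T}\vvvert\Psi_t\vvvert_\gamma^p<\infty$.

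The main obstacle is the uniform second-chaos estimate on the double Fourier sum for the off-diagonal iterated integral, and more precisely its joint space-time version. I would first prove the fixed-time bound cleanly via the $\min(1,k|h|)$ estimates, and then treat time increments by splitting the difference into the two factors and applying the fixed-time bound to each.
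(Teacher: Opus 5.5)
Your route differs from the paper's. The paper never computes an iterated integral. It checks that the spatial increment variance $\sigma^2(u)$ of $\psi_t$ is concave and non-decreasing on $[0,\frac{\pi}{2}]$ (via the integral representation of the polylogarithm) and bounded by $L|u|^{2\gamma'}$. It then invokes the general Gaussian rough path theory of \cite[Theorems~10.4, 10.5, 10.9, Corollary~10.10]{MR4174393} (covariance of finite $\rho$-variation with $\rho=1/(2\gamma')<2$), followed by Kolmogorov in time.

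Your explicit second-chaos computation in Fourier variables has three advantages:
\begin{itemize}
\item it avoids the concavity verification entirely;
\item it relies on distinct Fourier modes of the stationary field being uncorrelated, also across different times, so the time increments need no fresh $\rho$-variation check for $\psi_t-\psi_s$;
\item it produces the canonical lift directly as the $L^2$-limit of Fourier truncations.
\end{itemize}
The price is that everything rests on the double-sum estimate, and the bound you propose for it is false.

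With $h=x'-x$ one has, up to a unimodular factor, $I_{k,l}=e^{i(k+l)x}J_{k,l}$, where
\[
J_{k,l}=\int_0^h (e^{iks}-1)\, il\, e^{ils}\,\d s = e^{ilh}\bigl(e^{ikh}-1\bigr)-\frac{k}{k+l}\bigl(e^{i(k+l)h}-1\bigr).
\]
In particular $J_{k,-k}=-ikh-(e^{-ikh}-1)$, so $|J_{k,-k}|\sim|k||h|$. When $|k||h|\gg1$ this is much larger than $\min(1,|k||h|)\min(1,|l||h|)=1$.

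The correct estimate is $|J_{k,l}|\lesssim\min(1,|k||h|)+\frac{|k|}{|k+l|}\min(1,|k+l||h|)$, together with the trivial bound $|J_{k,l}|\le|l||h|\min(2,|k||h|)$. Away from the resonance $|k+l|\ll|k|$ this reduces to your factorized bound. The strip $l=-k+m$, $|m|\le|k|/2$, $|k|>1/|h|$ is different. Since $k^{2\delta-\frac{\alpha}{\beta}}=|k|^{-1-2\gamma_0}$, it contributes
\[
\sum_{|k|>1/|h|}|k|^{-2-4\gamma_0}\sum_{|m|\le|k|/2}\Bigl(1+\min\bigl(k^2h^2,\,k^2/m^2\bigr)\Bigr)\lesssim|h|\sum_{|k|>1/|h|}|k|^{-4\gamma_0}\sim|h|^{4\gamma_0},
\]
which is finite only because $4\gamma_0>1$. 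This is exactly where the threshold $\gamma_0>\frac14$ must enter; it is $\rho<2$, the analogue of $H>\frac14$ for fractional Brownian motion.

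Your factorized bound never uses this threshold, so it cannot be right. All terms of your double sum are non-negative. For $\gamma_0\le\frac14$ the resonant terms therefore force $\E|\uppsi^{N,ij}_{t;x,x'}|^2\to\infty$ as $N\to\infty$, whereas your bound would give uniform-in-$N$ control for every $\gamma_0>0$.

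The step is fixable. Use the corrected bound, and apply it also to the tails $|k|>N$ or $|l|>N$ in the Cauchy estimate and to the time increments, where the same resonance appears. Your argument then goes through, since the paper assumes $\gamma_0>\frac13$.
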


\begin{proof}
    Because $\psi$ is a Gaussian process with stationary increments in space, the result will follow from \cite[Theorem~10.9 and Corollary~10.10]{MR4174393} if we show that the expectation of the increment
    \begin{equation*}
        \sigma^2(u) \coloneqq \E \left[ \big( \psi(t, x+u) - \psi(t,x) \big)^2 \right]
    \end{equation*}
    has the following properties:
    \begin{enumerate}
        \item there exists $h > 0$ such that $\sigma^2$ is concave and non-decreasing on $[0, h]$,
        \item for any $\gamma' \in (\gamma, \gamma_0)$, there exists $L > 0$ such that $| \sigma^2(u) | \leq L |u|^{2 \gamma'}$ for all $u \in [0, h]$.
    \end{enumerate}
    
    The second condition holds by \eqref{eq:psi-bound-1} with $L = L(\beta, \alpha, \delta)$, which is independent of $t$. For the first condition, we will prove it at the end of the proof. So for each time $t$, we have a second-order lift $\uppsi_t$ of $\psi_t$. Then combining \cite[Theorem~10.4]{MR4174393}, we get for every $t \in [0, T)$, 
    \begin{equation*}
        \E \big( \| \uppsi(t; \cdot, \cdot ) \|_{2 \gamma} \big) \lesssim L. 
    \end{equation*}
    Next, to get this bound uniformly in time, we need to apply Kolmogorov's continuity theorem \cite[Theorem~4.23]{MR4226142}. Interpolate the \eqref{eq:psi-bound-1} and \eqref{eq:psi-bound-2} to get for any $\varepsilon \in [0,1]$
    \begin{equation*}
        \mathbb{E} \left[ \left| \big( \psi(t, x+u) - \psi(s, x+u) \big) - \big( \psi(t, x) - \psi(s, x) \big) \right|^2 \right] \leq L |t-s|^{ 2 \beta' \varepsilon } |u|^{ 2 \gamma' (1 - \varepsilon) }.
    \end{equation*}
    Then it follows from \cite[Theorem~10.9]{MR4174393} that 
    \begin{equation*}
        \left\| R_{\psi(t,\cdot) - \psi(s,\cdot)} \right\|_{(1 / 2\gamma)-\mathrm{var}; [x, x']^2} \lesssim_{\rho} L |t-s|^{ 2 \beta' \varepsilon } |x-x'|^{ 2 \gamma }
    \end{equation*}
    for $\gamma \leq \gamma' (1 - \varepsilon) $ and follows from \cite[Theorem~10.5]{MR4174393} that 
    \begin{equation*}
        \left( \mathbb{E} \left[ \left\| \uppsi(t; \cdot,\cdot) - \uppsi(s; \cdot,\cdot) \right\|_{2 \gamma}^q \right] \right)^{1/q} \lesssim |t-s|^{ \beta' \varepsilon } L.
    \end{equation*}
    Finally, by Kolmogorov's continuity theorem \cite[Theorem~4.23]{MR4226142}, one obtains the estimate $\E \big( \| \uppsi \|_{2 \gamma, T} \big) \lesssim L$. Combining it with Theorem \ref{lem:reg-of-stationary-sol}, we get \eqref{eq:lift-bound}, as desired. 
    
    Now it just remains to show that $\sigma^2$ is concave and non-decreasing on some interval $[0, h]$. We get from \eqref{eq:psi-covariance} the formula 
    \begin{equation*}
        \sigma^2 ( u ) = \fC_\beta(0) \frac{2 n}{\pi} \sum_{k=1}^\infty k^{2 \delta - \frac{\alpha}{\beta}} \bigl(1- \cos(k u)\bigr),
    \end{equation*}
    and the concavity and non-decreasing on $[0, \frac{\pi}{2}]$ follows from a more general result that for any $\rho \in (0,1]$ the function
    \[
    \eta(u) \coloneqq \displaystyle \sum_{k=1}^\infty k^{ - \rho - 1} \big( 1 - \cos ( ku ) \big)
    \]
    is concave and non-decreasing on $[0, \frac{\pi}{2}]$. By the Dirichlet test, the derivative of $\eta(u)$ is just the sum of the derivatives of each term
    \begin{equation*}
        \eta'(u) = \sum_{k=1}^\infty \frac{\sin(ku) }{k^\rho}, 
    \end{equation*}
    which is easy to see $\eta(u)$ is non-decreasing on $[0, \frac{\pi}{2}]$. Thus, it remains to prove $\eta''(u) \leq 0$ on $(0, \frac{\pi}{2})$.

    Using $\sin(ku) = \frac{e^{iku} - e^{-iku}}{2i}$, we can then rewrite $\eta'$ as
    \begin{align*}
        \eta'(u) = \frac{1}{2i} \sum_{k=1}^\infty \left( \frac{e^{iku}}{ k^{\rho} } - \frac{e^{-iku}}{ k^{\rho} } \right).
    \end{align*}
    Recall that the polylogarithm is defined by $\Li_\rho (x) = \sum_{k=1}^\infty \frac{x^k}{k^\rho}$ for $\rho \in (0, 1]$ and $x \in \{ z \in \mathbb{C} : \|z\| \leq 1 , z \neq 1 \}$, and the following integral representation holds for $\rho > 0, x \in \mathbb{C} \setminus \{ z \in \R : z \geq 1 \}$: 
    \begin{equation*}
        \Li_\rho (e^{iu}) = \frac{1}{\Gamma(\rho)} \int_0^\infty \frac{x^{\rho-1}}{e^{x-iu} - 1 } \,\d x.
    \end{equation*}
    Then we get
    \begin{align*}
        \eta'(u) &= \frac{1}{2i \Gamma(\rho)} \int_0^\infty x^{\rho-1} \left( \frac{1}{e^{x-iu} -1 } - \frac{1}{e^{x+iu} -1 } \right)\d x \\
        &= \frac{1}{\Gamma(\rho)} \int_0^\infty \frac{1}{x^{1-\rho} } \frac{e^x \sin(u)}{e^{2x} - 2\cos(u) e^x + 1 } \,\d x.
    \end{align*}
Differentiating, yields 
    \begin{align*}
        \eta''(u)
        &= \frac{1}{\Gamma(\rho)} \int_0^\infty \frac{e^x}{x^{1-\rho} } \frac{\cos(u) ( e^{2x} +1 ) - 2 e^x}{( e^{2x} - 2\cos(u) e^x + 1)^2 } \,\d x \\
        &= \frac{1}{\Gamma(\rho)} \int_0^\infty \frac{1}{x^{1-\rho} } \frac{\cos(u) ( e^{x} + e^{-x} ) - 2}{( e^{x} + e^{-x} - 2\cos(u))^2 } \,\d x \\
        &= \frac{\cos(u) }{2 \Gamma(\rho)} \int_0^\infty \frac{1}{x^{1-\rho} } \frac{\cosh(x) - \frac{1}{\cos(u)}}{\big( \cosh(x) - \cos(u) \big)^2 } \,\d x
    \end{align*}
    for $u \in (0, \frac{\pi}{2})$. Then we get $\eta'' (u) \leq 0$ for $u \in (0, \frac{\pi}{2})$ from the following result: 
    \begin{equation}\label{eq:integral-is-negative}
        \int_0^\infty \frac{1}{x^{1-\rho}} \frac{\cosh(x) - 1 / v}{( \cosh(x) - v)^2 } \, \d x < 0 
    \end{equation}
    for any $\rho, v \in (0,1)$.

    It is left to prove \eqref{eq:integral-is-negative}. We start by proving the following identity:
\[
\int_0^\infty \frac{\cosh(x) - 1/v}{(\cosh(x) - v)^2 } \,\d x = - \frac{1}{v}.
\]
For this, we define the function $f(v) = \int_0^\infty \frac{1}{\cosh(x) - v} \,\d x$ and write the left-hand side as $f(v) + ( v-\frac{1}{v}) f'(v)$. By changing the integration variable to $e^x$, we can compute 
    \begin{align*}
        f(v) &= 2 \int_0^\infty \frac{e^x}{e^{2x} + 1 - 2v e^x } \,\d x = 2 \int_1^\infty \frac{1}{x^2 - 2vx + 1 } \,\d x,
    \end{align*}
    which is equal to $\frac{2}{ \sqrt{1-v^2} } \arctan( \frac{1+v}{ \sqrt{1-v^2} } )$. Therefore 
\[
f'(v) = \displaystyle \frac{2}{ \sqrt{1-v^2} } \left( \frac{v}{1-v^2} \arctan \left( \frac{1+v}{ \sqrt{1-v^2} } \right) + \frac{1}{2 \sqrt{1-v^2}} \right)
\]
and we get $f(v) + ( v-\frac{1}{v}) f'(v) = - \frac{1}{v}$, as desired.

Let us now look at the integral in \eqref{eq:integral-is-negative}. Setting $x^* \coloneqq \cosh^{-1} \left( \frac{1}{v} \right)$, we write the integral as
    \begin{align*} 
        &\int_0^{x^*} \frac{1}{x^{1-\rho}} \frac{\cosh(x) - 1/v}{( \cosh(x) - v)^2 } \, \d x + \int_{x^*}^\infty \frac{1}{x^{1-\rho}} \frac{\cosh(x) - 1/v}{( \cosh(x) - v)^2 } \, \d x \\
        &\qquad \leq \int_0^{x^*} \frac{1}{(x^*)^{1-\rho}} \frac{\cosh(x) - 1/v}{( \cosh(x) - v)^2 } \, \d x + \int_{x^*}^\infty \frac{1}{(x^*)^{1-\rho}} \frac{\cosh(x) - 1/v}{(\cosh(x) - v)^2 } \, \d x \\
        &\qquad = \frac{1}{(x^*)^{1-\rho}} \int_0^\infty \frac{\cosh(x) - 1 / v}{( \cosh(x) - v)^2 } \, dx = - \frac{1}{(x^*)^{1-\rho}} \frac{1}{v} < 0,
    \end{align*}
    where we used $\cosh(x) - \frac{1}{v} \leq 0$ when $x \in [0,x^*]$.
\end{proof}

\section{Schauder-type estimates}
\label{sec:schauder}

Now we discuss how the regularity of a function improves after convolution with the fractional heat kernel. We prove the result in a more general context. Namely, by analogy with the fractional heat kernel, defined in \eqref{eq:heat-kernel}, we consider a function $P: \R_+ \times \T \to \R$ whose Fourier transform is of the form 
\begin{equation}\label{eq:P-Fourier}
\widehat{P}_t(k) \coloneqq \frac{1}{ \sqrt{2\pi} } \int_{-\pi}^\pi P(t, x) e^{-ikx} \, \d x = \phi(t^\beta |k|^\alpha) 
\end{equation}
for $\alpha > 1$ and $\beta > \frac{1}{2}$, where the function $\phi: \R_+ \to \R$ has the following properties:

\begin{assumption}\label{assump:function-phi}
The function $\phi: \R_+ \to \R$ is positive, strictly decreasing, and has the following form and decay: 
\begin{equation}\label{eq:assumption-on-phi}
        \left| \frac{\d^j}{\d x^j} \phi(x) \right| \leq \frac{C}{ 1+ x^{\ell_\star+j}} 
\end{equation} 
for all $x \in \R_+$ and $j \in \N_0$, and for some $\ell_\star \in \N$. Moreover, $-\phi'(x)$ is positive and decreasing on $x \geq 0$. 
\end{assumption}

Then, for a function $f$ on $[0, T] \times \T$ of suitable regularity, which will be specified later, we want to study the regularity of the convolution 
\begin{equation*}
(t,x)\quad \mapsto\quad \int_0^t \int_{-\pi}^\pi P(t-s, x-y) f(s, y) \, \d y\, \d s.
\end{equation*}
Before proving a Schauder-type estimate, we need to obtain several auxiliary results. 

\begin{lemma}\label{L^1-bound of P}
    If $m \in \N_0$ satisfies $m < \alpha \ell_\star - 1$, then there is a constant $C=C(m) > 0$ such that
    \begin{equation*}
        \| \partial_x^m  P_t \|_{L^1(\T)} \leq C t^{ - \frac{\beta}{\alpha} m } 
    \end{equation*}
    uniformly in $t > 0$.
\end{lemma}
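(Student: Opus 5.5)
The idea is to bound $\partial_x^m P_t$ pointwise at two scales and then integrate over $\T$. The natural length scale is $\lambda \coloneqq t^{\beta/\alpha}$, since $\widehat P_t(k) = \phi((\lambda |k|)^\alpha)$. The Fourier series is
\begin{equation*}
\partial_x^m P_t(x) = \frac{1}{\sqrt{2\pi}} \sum_{k\in\Z} (ik)^m \phi(\lambda^\alpha |k|^\alpha) e^{ikx}.
\end{equation*}

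\textbf{Small $x$.} For $|x| \le \lambda$ we use the crude bound
\begin{equation*}
|\partial_x^m P_t(x)| \lesssim \sum_k |k|^m \bigl(1+(\lambda|k|)^{\alpha\ell_\star}\bigr)^{-1} \lesssim \lambda^{-m-1}.
\end{equation*}
The first inequality is \eqref{eq:assumption-on-phi} with $j=0$. The sum converges because $m < \alpha\ell_\star - 1$. The second inequality comes from comparing the sum with the integral $\int |k|^m (1+(\lambda k)^{\alpha\ell_\star})^{-1}\,\d k$ after the substitution $\lambda k \mapsto k$. For small $t$ this holds with a constant $1$ added, which is harmless. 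Integrating over $|x|\le\lambda$ gives a contribution $\lesssim \lambda^{-m}$.

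\textbf{Large $x$.} For $\lambda < |x| \le \pi$ we gain decay by summation by parts in $k$. Summing by parts $N$ times against $e^{ikx}$ produces $|e^{ix}-1|^{-N} \sim |x|^{-N}$ times a sum of $N$-th finite differences of $g(k) \coloneqq k^m \phi(\lambda^\alpha |k|^\alpha)$. Away from $k=0$, Leibniz's rule together with \eqref{eq:assumption-on-phi} for $j \le N$ gives
\begin{equation*}
|\Delta^N g(k)| \lesssim |k|^{m-N}\bigl(1+(\lambda|k|)^{\alpha\ell_\star}\bigr)^{-1}.
\end{equation*}
Each derivative of $\phi((\lambda k)^\alpha)$ costs a factor $k^{-1}$, because the extra decay $x^{-j}$ in \eqref{eq:assumption-on-phi} compensates the chain-rule factors. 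Choosing $N = m+2$, the sum over $|k| \ge 1$ is dominated near $k \sim 1$. The resulting pointwise bound would be $|x|^{-m-2}$, but scaling suggests the sharp bound $|\partial_x^m P_t(x)| \lesssim \lambda\,|x|^{-m-2}$.

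The cleaner way to get the scaling right is to split the frequencies smoothly into $|k| \le 1/|x|$ and $|k| > 1/|x|$. On the low part we bound crudely. On the high part we sum by parts $N$ times, with $m+1 < N$. This yields a pointwise bound of the form $\lambda^{a}|x|^{-m-1-a}$ for some $a>0$ with $a \le \alpha\ell_\star - m - 1$, which is possible by the hypothesis. Integrating over $\lambda<|x|\le\pi$ then gives $\lesssim \lambda^{a}\lambda^{-m-a} = \lambda^{-m}$.

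\textbf{Main obstacle.} The hard step is the non-smoothness of $|k|^\alpha$ at $k=0$: the finite differences of $g$ are not small near the origin. The source of the gain $\lambda^{a}$ is therefore the small-$x$ behaviour of the symbol rather than its smoothness. An alternative I would try first is Poisson summation. This writes $P_t$ as the periodization of the whole-line kernel $\lambda^{-1} p(\cdot/\lambda)$, where $p$ is the inverse Fourier transform of $\phi(|\xi|^\alpha)$ on $\R$. Then $\|\partial_x^m P_t\|_{L^1(\T)} \le \lambda^{-m}\|p^{(m)}\|_{L^1(\R)}$. It remains to show $p^{(m)} \in L^1(\R)$. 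Boundedness of $p^{(m)}$ near the origin follows from the integrability of $|\xi|^m \phi(|\xi|^\alpha)$. Decay of order $|y|^{-1-a}$ at infinity comes from the homogeneous singularity $|\xi|^{\alpha}$ at $\xi=0$, combined with $N$ integrations by parts for large $|\xi|$ using \eqref{eq:assumption-on-phi}.

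Given $p^{(m)} \in L^1(\R)$, the lemma follows immediately and uniformly in $t>0$.
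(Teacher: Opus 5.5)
Your final route, Poisson summation plus scaling, is correct and genuinely different from the paper's. It reduces the lemma to the single $t$-independent statement $p^{(m)}\in L^1(\R)$.

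The paper stays on $\T$. It proves $|\partial_x^m P_t(x)|\lesssim t^{-\frac{\beta}{\alpha}(m+1)}$ for $|x|<t^{\beta/\alpha}$ and $|\partial_x^m P_t(x)|\lesssim t^{-\frac{\beta}{\alpha}(m-1)}/\sin^2(x/2)$ for $|x|\ge t^{\beta/\alpha}$, then integrates in $x$. The second bound comes from expressing $e^{ikx}$ through the discrete Laplacian, summing by parts twice in $k$, and bounding second differences of $k^m\phi(t^\beta|k|^\alpha)$ by the mean value theorem, with $k=0$ treated separately.

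Your argument is the rescaled continuum version of the same estimate, and you should make its two ingredients explicit:
\begin{itemize}
\item $\|p^{(m)}\|_{L^\infty}\lesssim\int_\R|\xi|^m|\phi(|\xi|^\alpha)|\,\d\xi<\infty$, which holds exactly when $m<\alpha\ell_\star-1$.
\item Two integrations by parts give $|p^{(m)}(y)|\lesssim|y|^{-2}\|F''\|_{L^1(\R)}$ with $F(\xi)=\xi^m\phi(|\xi|^\alpha)$. This is legitimate because $F\in\CC^1$ and $F''=O(|\xi|^{m+\alpha-2})$ near $\xi=0$ is integrable precisely since $\alpha>1$. So $a=1$ works, and no finer analysis of the singularity is needed.
\end{itemize}
Poisson summation itself is justified because $\lambda^{-1-m}p^{(m)}(\cdot/\lambda)$ and $\xi^m\phi(\lambda^\alpha|\xi|^\alpha)$ both lie in $L^1(\R)$. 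Hence the periodization is an $L^1(\T)$ function with the same Fourier coefficients as $\partial_x^m P_t$, up to constants.

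What your route buys is that uniformity in $t>0$ comes for free from scaling. There are no sum--integral comparisons and no small/large $t$ cases. There are also no finite-difference stencils reaching across $k=0$, where $|k|^\alpha$ is not smooth. In the paper, the mean value step at $k=\pm1$ uses $\eta\in[0,2]$, where the term $t^\beta\eta^{\alpha-2}\phi'$ is unbounded for $m=0$ and $\alpha<2$; it needs the first-order treatment used at $k=0$. The paper's route, in turn, never leaves the torus.

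The torus-side discussion before that is not used in the end, but two of its claims are wrong.

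First, the ``cleaner way'' does not yield $\lambda^a|x|^{-m-1-a}$. The crude bound on the frequencies $|k|\lesssim 1/|x|$ gives only $|x|^{-m-1}$, with no power of $\lambda$. The low-frequency part is genuinely of that size: it contains the low-frequency part of $\phi(0)k^m$, which is compensated only by the high frequencies. For $m=0$ this costs a factor $\log(1/\lambda)$.

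Second, the finite differences near the origin \emph{are} small. For $N>m$, $\Delta^N$ annihilates $\phi(0)k^m$, so $\Delta^N g=O(\lambda^\alpha)$ on stencils containing $0$. Away from $0$, at least one of the $N$ derivatives must fall on $\phi(\lambda^\alpha|\cdot|^\alpha)$, which improves your bound to $|k|^{m-N}\min\{(\lambda|k|)^\alpha,(\lambda|k|)^{-\alpha\ell_\star}\}$. Hence $\sum_k|\Delta^N g(k)|\lesssim\lambda$ for $\lambda\le 1$, using $\alpha>1$. So your first attempt with $N=m+2$ already gives the sharp bound $\lambda|x|^{-m-2}$. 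The paper does the same with $N=2$, obtaining $\lambda^{1-m}|x|^{-2}$.
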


\begin{proof}
    Using the Fourier series and \eqref{eq:P-Fourier}, we write 
    \begin{equation}\label{eq1}
        \partial_x^m P(t, x) = \sum_{k \in \Z} (ik)^m \widehat{P}_t(k) e^{ikx} = \sum_{k \in \Z} (ik)^m \phi(t^\beta |k|^\alpha) e^{ikx}.
    \end{equation}
    The desired bound will follow from bounds on the function $P$ in the two regimes: $|x|<t^{\beta / \alpha}$ and $|x|\geq t^{\beta / \alpha}$, which require different analysis. 

    In the case $|x|<t^{\beta / \alpha}$, we simply use the assumption \eqref{eq:assumption-on-phi} and get
    \begin{align*}
        | \partial_x^m P(t, x) | 
         \leq \sum_{k \in \Z} |k|^m \phi(t^\beta |k|^\alpha) \lesssim \sum_{k \in \Z} \frac{|k|^m}{1 + (t^\beta |k|^\alpha)^{\ell_\star}}.
    \end{align*}
    Estimating the sum by the respective integral, we bound this expression by a constant times 
    \begin{equation}
    \int_0^\infty \frac{k^m}{1 + (t^\beta k^\alpha)^{\ell_\star}} \, \d k = t^{-\frac{\beta}{\alpha} (m+1) } \int_0^{\infty} \frac{k^m}{1+ k^{\alpha \ell_\star}} \, \d k \lesssim t^{-\frac{\beta}{\alpha} (m+1)}, \label{eq2}
    \end{equation}
    where we rescaled the integration variable by $t^{\beta/\alpha}$. The integral converges because of our assumption $\alpha \ell_\star > m + 1$. 

    In the case $|x| \geq t^{\beta / \alpha}$, we define the discrete Laplacian 
    \[
    \underline{\Delta}\, g(k) \coloneqq g(k+1) - 2 g(k) + g(k-1).
    \]
    Then we have $\underline{\Delta}\, e^{ikx} = - 4\sin^2 (x / 2) e^{ikx}$. Expressing $e^{ikx}$ from the right-hand side and substituting into \eqref{eq1}, we get
    \begin{equation}\label{eq:P-discrete-Laplace}
        \partial_x^m P(t, x) = - \frac{1}{4\sin^2 (x / 2)} \sum_{k \in \Z} (ik)^m \phi(t^\beta |k|^\alpha) \left( \underline{\Delta} e^{ikx} \right). 
    \end{equation}
    We note that the discrete Laplacian satisfies the summation by parts formula 
    \[
    \sum_{k \in \Z} f(k) \underline{\Delta} g(k) = \sum_{k \in \Z} \underline{\Delta} f(k) g(k),
    \]
    provided the sums converge. Applying it to the preceding expression, we get 
    \begin{equation}\label{eq:P-bound-1}
        | \partial_x^m P(t, x) | \leq \frac{1}{4\sin^2 (x / 2)} \sum_{k \in \Z} \left| \underline{\Delta} \left[ k^m \phi(t^\beta |k|^\alpha) \right] \right|. 
    \end{equation}
    We bound $\underline{\Delta} \left[ k^m \phi(t^\beta |k|^\alpha) \right]$ using the mean value theorem. For this, we need to distinguish the cases $k=0$ and $k \neq 0$.

    For $k=0$ we have 
    \begin{equation}\label{eq:Delta-phi-k0}
        | \underline{\Delta} \left[ k^m \phi(t^\beta |k|^\alpha) \right](0) | = | 1^m \phi(t^\beta) - 2 \cdot 0^m \phi(0) + (-1)^m \phi(t^\beta) |
    \end{equation}
    with the convention $0^0=1$. The latter vanishes if $m$ is odd. If $m$ is a non-zero even, then we bound \eqref{eq:Delta-phi-k0} by $2 | \phi(t^\beta) | \lesssim \frac{1}{1+ t^{\beta \ell_\star}} \lesssim t^{- \beta \ell_\star} \wedge 1$, where we made use of our assumption \eqref{eq:assumption-on-phi}. In the case $m=0$ we use the mean value theorem to bound \eqref{eq:Delta-phi-k0} by 
    \[
    2| t^\beta \phi'(t^\beta \eta) | \lesssim \frac{t^\beta}{1+ \eta^{(\ell_\star+1)} t^{\beta(\ell_\star+1)}} \wedge 1 \lesssim t^\beta \wedge 1 \lesssim t^{\frac{\beta}{\alpha}}
    \]
    for some $\eta \in [0, 1]$, where we used our assumption \eqref{eq:assumption-on-phi} again. In all these cases, we can bound \eqref{eq:Delta-phi-k0} by a constant times $t^{-\frac{\beta}{\alpha} (m-1)}$ since $\alpha > 1$ and $m < \alpha \ell_\star -1$. 

    For $|k| \geq 1$ we consider only $k \geq 1$, while the analysis for $k \leq -1$ is the same. We apply the mean value theorem to bound 
    \begin{align*}
        | \underline{\Delta} \left[ k^m \phi(t^\beta k^\alpha) \right] | \lesssim \left| \frac{\d^2}{\d \eta^2} \left[ \eta^m \phi(t^\beta \eta^\alpha) \right] \right|
    \end{align*}
    for some point $\eta \in [k-1, k+1]$. Computing the derivative and using the assumption \eqref{eq:assumption-on-phi}, we bound this expression by a constant multiple of 
    \begin{align*}
        &|\eta|^{m-2} \phi(t^\beta \eta^\alpha) + t^\beta |\eta|^{m + \alpha-2} |\phi'(t^\beta \eta^\alpha)| + t^{2\beta} |\eta|^{m + 2 \alpha-2} |\phi''(t^\beta \eta^\alpha)| \\
        &\qquad \lesssim \frac{m(m-1) k^{m-2}}{1+ (t^\beta k^\alpha)^{\ell_\star}} + \frac{m t^\beta k^{m + \alpha-2}}{1+ (t^\beta k^\alpha)^{\ell_\star+1}} + \frac{t^{2\beta} k^{m + 2 \alpha-2}}{1+ (t^\beta k^\alpha)^{\ell_\star+2}}.
    \end{align*}

    Using the preceding estimates, we bound \eqref{eq:P-bound-1} by a constant times 
    \[
   \frac{1}{4\sin^2 (x / 2)} \left( t^{ - \frac{\beta}{\alpha} (m-1) } + \sum_{k \neq 0} \left( \frac{m(m-1) k^{m-2}}{1+ (t^\beta k^\alpha)^{\ell_\star}} + \frac{m t^\beta k^{m + \alpha-2}}{1+ (t^\beta k^\alpha)^{\ell_\star+1}} + \frac{t^{2\beta} k^{m + 2 \alpha-2}}{1+ (t^\beta k^\alpha)^{\ell_\star+2}}\right) \right).
    \]
    Estimating the sum by the respective integral and rescaling the integration variable by $t^{\beta/\alpha}$, we get a bound of order
    \[
    \frac{t^{ - \frac{\beta}{\alpha} (m-1) }}{\sin^2 (x / 2)} \left( 1 + \int_{0}^\infty \left( \frac{m(m-1) k^{m-2}}{1+ k^{\alpha \ell_\star}} + \frac{m k^{m + \alpha-2}}{1+ k^{\alpha(\ell_\star+1)}} + \frac{k^{m + 2 \alpha-2}}{1+ k^{\alpha(\ell_\star+2)}}\right) \d k \right).
    \]
    The expression in the parentheses is finite, and we get 
    \begin{equation}\label{eq3}
    | \partial_x^m P(t, x) | \lesssim \frac{t^{ - \frac{\beta}{\alpha} (m-1) }}{\sin^2 (x / 2)}.
    \end{equation}

    Now we can go back to estimate the $L^1$-norm of $P(t, x)$ by using \eqref{eq2} and \eqref{eq3}. We get
    \begin{align*}
        \| \partial_x^m P(t, \cdot) \|_{L^1(\T)} 
        &= \int_{|x|< (t^{\beta / \alpha} \wedge \pi)} | \partial_x^m P(t, x) | \, \d x + \int_{t^{\beta / \alpha} < |x| \leq \pi} | \partial_x^m P(t, x) | \, \d x \\
        &\lesssim t^{-\frac{\beta}{\alpha} m} + t^{ - \frac{\beta}{\alpha} (m-1) } \int_{t^{\beta / \alpha} < |x| \leq \pi} \frac{\d x}{\sin^2 (x / 2)}.
    \end{align*}
    We use the inequality $\sin(\theta) \geq \frac{\theta}{2}$ for $0 \leq \theta \leq \frac{\pi}{2}$ to estimate the preceding expression by 
    \[
    t^{-\frac{\beta}{\alpha} m} + 16 t^{ - \frac{\beta}{\alpha} (m-1) } \int_{t^{\beta / \alpha} < |x| \leq \pi} \frac{\d x}{x^2},
    \]
    which is of order $t^{-\frac{\beta}{\alpha} m}$.
\end{proof}

The following lemma studies the time dependence of the fractional heat kernel, which will lead to the result of time continuity in Schauder-type estimates. 

\begin{lemma}\label{L^1-bound of P-time}
    In the setting of Lemma~\ref{L^1-bound of P}, for any $\varepsilon \in [0, \beta]$ one has
    \begin{equation}
        \| \partial_x^m  ( P_{t'} - P_t ) \|_{L^1(\T)} \leq C (t'-t)^\varepsilon t^{-\frac{\beta}{\alpha} m - \varepsilon} \label{bdd: L^1-norm of P_t - P_t'}
    \end{equation}
    uniformly in $0 < t < t'$.
\end{lemma}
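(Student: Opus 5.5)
The plan is to interpolate between two bounds: the trivial bound at $\varepsilon=0$ and a bound at $\varepsilon=\beta$. For $\varepsilon=0$, Lemma~\ref{L^1-bound of P} and the triangle inequality give
\[
\|\partial_x^m(P_{t'}-P_t)\|_{L^1}\le C\bigl(t'^{-\frac{\beta}{\alpha}m}+t^{-\frac{\beta}{\alpha}m}\bigr)\lesssim t^{-\frac{\beta}{\alpha}m},
\]
since $t<t'$. If we then show
\begin{equation*}
\|\partial_x^m(P_{t'}-P_t)\|_{L^1(\T)}\lesssim (t'-t)^{\beta}\, t^{-\frac{\beta}{\alpha}m-\beta},
\end{equation*}
the general case follows from $\min(a,b)\le a^{1-\theta}b^{\theta}$ with $\theta=\varepsilon/\beta$.

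For the $\varepsilon=\beta$ bound, write $P_{t'}-P_t$ in Fourier variables as $\phi(t'^\beta|k|^\alpha)-\phi(t^\beta|k|^\alpha)$. We split into two cases.

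\textbf{Case $t'-t\ge t$.} Then $(t'-t)^\beta t^{-\beta}\ge1$, so the $\varepsilon=0$ bound already suffices.

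\textbf{Case $t'-t<t$.} Here $t\le r\le t'\le 2t$, and we use the fundamental theorem of calculus in $r\in[t,t']$:
\[
P_{t'}-P_t=\int_t^{t'}\partial_rP_r\,\d r,\qquad \widehat{\partial_rP_r}(k)=\beta r^{\beta-1}|k|^\alpha\phi'(r^\beta|k|^\alpha).
\]
The multiplier $\tilde\phi(y)\coloneqq y\,\phi'(y)$ satisfies the bounds \eqref{eq:assumption-on-phi} with $\ell_\star$ replaced by $\ell_\star$ itself: $|\tilde\phi^{(j)}(y)|\lesssim(1+y^{\ell_\star+j})^{-1}$ for large $y$, and it is bounded near $0$. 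Positivity and monotonicity were not used in the proof of Lemma~\ref{L^1-bound of P}; only the decay bounds entered. The kernel with Fourier transform $\tilde\phi(r^\beta|k|^\alpha)$ therefore has $\|\partial_x^m\cdot\|_{L^1}\lesssim r^{-\frac{\beta}{\alpha}m}$ by the same argument. Since $\widehat{\partial_rP_r}=\beta r^{-1}\tilde\phi(r^\beta|k|^\alpha)$, this gives
\[
\|\partial_x^m\partial_rP_r\|_{L^1}\lesssim r^{-1-\frac{\beta}{\alpha}m}.
\]
Integrating over $r\in[t,t']$ yields $(t'-t)\,t^{-1-\frac{\beta}{\alpha}m}$. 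Because $(t'-t)/t<1$ and $\beta<1$, this is at most $(t'-t)^\beta t^{-\beta-\frac{\beta}{\alpha}m}$.

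The main obstacle is checking that the proof of Lemma~\ref{L^1-bound of P} really transfers to $\tilde\phi$. This needs the second discrete difference estimate, including the $k=0$ term: $\tilde\phi(0)=0$ and $\tilde\phi$ is smooth with bounded derivatives. It also needs the condition $m<\alpha\ell_\star-1$ to keep the resulting integrals finite, which should hold since the decay exponents are unchanged. I would redo that computation briefly, tracking only the decay exponents.
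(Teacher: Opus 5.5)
Your proof is correct, but it is organized differently from the paper's.

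The paper works with the difference kernel $P_{t'}-P_t$ directly. For $t<t'<2t$ it redoes both regimes of the proof of Lemma~\ref{L^1-bound of P}: the pointwise bound for $|x|<t^{\beta/\alpha}$ and the discrete-Laplacian bound for $|x|\ge t^{\beta/\alpha}$. In both it controls the multiplier $\phi((t')^\beta|k|^\alpha)-\phi(t^\beta|k|^\alpha)$ by the mean value theorem together with $(t')^\beta-t^\beta\le(t'-t)^\beta$. This is where the factor $(t'-t)^\beta$, and hence the cap $\varepsilon\le\beta$, comes from. The second-difference estimate for this difference of multipliers is only sketched there.

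You instead write $P_{t'}-P_t=\int_t^{t'}\partial_rP_r\,\d r$ and note that $r\,\partial_rP_r$ is a kernel of the same class, with multiplier $\beta\tilde\phi$, $\tilde\phi(y)=y\phi'(y)$. By Leibniz, $\tilde\phi^{(j)}=y\phi^{(j+1)}+j\phi^{(j)}$, so $\tilde\phi$ satisfies \eqref{eq:assumption-on-phi} with the same $\ell_\star$. Since the proof of Lemma~\ref{L^1-bound of P} only uses those decay bounds, you can apply it as a black box.

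What your route buys:
\begin{itemize}
\item It is more modular and needs no new second-difference computation.
\item It proves more than stated. From $(t'-t)\,t^{-1-\frac{\beta}{\alpha}m}$ and $(t'-t)/t<1$ you get \eqref{bdd: L^1-norm of P_t - P_t'} for every $\varepsilon\in[0,1]$, not only $\varepsilon\in[0,\beta]$.
\item Consequently the interpolation step can be dropped entirely.
\end{itemize}

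One routine point should be written out: that $\partial_x^m(P_{t'}-P_t)(x)=\int_t^{t'}\partial_x^m\partial_rP_r(x)\,\d r$, and that Minkowski's integral inequality then bounds the $L^1$ norm. This holds because for $r\in[t,t']$ the Fourier series of $\partial_x^m\partial_rP_r$ converges absolutely and uniformly, by the decay of $\tilde\phi$ and the condition $m<\alpha\ell_\star-1$.
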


\begin{proof}
Using Lemma~\ref{L^1-bound of P}, we have a trivial bound 
\[
\| \partial_x^m  ( P_{t'} - P_t ) \|_{L^1(\T)} \leq \| \partial_x^m P_{t'} \|_{L^1(\T)} + \| \partial_x^m P_t \|_{L^1(\T)} \lesssim (t')^{ - \frac{\beta}{\alpha} m } + t^{ - \frac{\beta}{\alpha} m } \lesssim t^{ - \frac{\beta}{\alpha} m }.
\]
If $t' \geq 2t$ then $t'-t \geq t$ and we can bound it by $(t'-t)^\varepsilon t^{ - \frac{\beta}{\alpha} m - \varepsilon}$ for any $\varepsilon \geq 0$.

Let us now consider the case $t < t' < 2t$. Let $Q_{t,t'}(x) := P(t', x) - P(t, x)$. Similarly to the proof of Lemma~\ref{L^1-bound of P}, we write 
    \begin{equation*}
        \partial_x^m Q_{t,t'}(x) = \sum_{k \in \Z} (ik)^m \widehat{ Q_{t,t'} }(k) e^{ikx} = \sum_{k \in \Z} (ik)^m ( \phi((t')^\beta |k|^\alpha) - \phi(t^\beta |k|^\alpha) ) e^{ikx},
    \end{equation*}
    and as in the preceding proof, we bound this function in different ways in different regimes. 

    For $|x|<t^{\beta / \alpha}$, we use the assumption \eqref{eq:assumption-on-phi} and get
    \begin{align*}
        | \partial_x^m Q_{t,t'}(x) | 
        &\leq \sum_{k \in \Z} |k|^m | \phi((t')^\beta |k|^\alpha) - \phi(t^\beta |k|^\alpha) | \\
          &\lesssim \sum_{k \in \Z} |k|^m \frac{ ((t')^\beta - t^\beta) |k|^\alpha }{ 1 + (t^\beta |k|^\alpha)^{\ell_\star+1} } \lesssim (t'-t)^{\beta} \sum_{k \in \Z} \frac{ |k|^{m + \alpha} }{ 1 + (t^\beta |k|^\alpha)^{\ell_\star+1} } .
    \end{align*}
    As in \eqref{eq2}, we bound this expression by a constant multiple of
    \begin{equation}
        (t'-t)^{\beta} \int_0^\infty \frac{ |k|^{m + \alpha} }{ 1 + (t^\beta |k|^\alpha)^{\ell_\star+1} } \, \d k 
        \lesssim (t'-t)^{\beta} t^{-\frac{\beta}{\alpha} (m + \alpha +1)}. \label{eq2-new}
    \end{equation}

In the case $|x| \geq t^{\beta / \alpha}$, we use the same computations as in \eqref{eq:P-discrete-Laplace} and write 
\begin{equation*}
        \partial_x^m Q_{t,t'}(x) = - \frac{1}{4\sin^2 (x / 2)} \sum_{k \in \Z} (ik)^m ( \phi((t')^\beta |k|^\alpha) - \phi(t^\beta |k|^\alpha) ) \left( \underline{\Delta} e^{ikx} \right). 
\end{equation*}
Estimating the function $\phi$ as above and repeating the derivation of \eqref{eq3}, we get 
\begin{equation*}
    | \partial_x^m Q_{t,t'}(x) | \lesssim (t'-t)^{\beta} \frac{t^{ - \frac{\beta}{\alpha} (m + \alpha-1) }}{\sin^2 (x / 2)}.
\end{equation*}
Recalling that we consider $t < t' < 2t$, we have $t'-t < t$ and hence 
\begin{equation}\label{eq3-new}
    | \partial_x^m Q_{t,t'}(x) | \lesssim (t'-t)^{\varepsilon} \frac{t^{ - \frac{\beta}{\alpha} (m-1) - \varepsilon}}{\sin^2 (x / 2)},
\end{equation}
for any $\varepsilon \in [0, \beta]$. The bound \eqref{bdd: L^1-norm of P_t - P_t'} follows from \eqref{eq2-new} and \eqref{eq3-new} in the same way as in the proof of Lemma~\ref{L^1-bound of P}.
\end{proof}

\begin{defn}\label{def:concave-down}
We call a sequence $\{ a_k \}_{k=L}^M$ ``concave down" if there exists an integer $L \leq k_0 \leq M$ such that 
\[
a_L \leq a_{L+1} \leq \cdots \leq a_{k_0}, \qquad\qquad a_{k_0} \geq a_{k_0+1} \geq \cdots \geq a_{M}.
\]
\end{defn}

We are going to use the Littlewood--Paley theory to measure the regularity of functions, and the following lemmas will be useful for this purpose. 

\begin{lemma}
    \label{L^1-bound_for_sum_of_cos/sin}
    For two natural numbers $L \leq M$, 
    suppose that $\{ b_k \}_{k=L}^M$ is an increasing positive sequence and $\{ c_k \}_{k=L}^M$ is a decreasing positive sequence. Let $\{ a_k \}_{k=L}^M$ be a $[0,1]$-valued ``concave down" sequence. Then there is a universal constant $C > 0$, such that 
    \begin{equation}\label{eq:sum-of-cosines}
        \left\| \sum_{k=L}^M a_k b_k c_k \cos(k\, \cdot) \right\|_{L^1(\T)} \leq C b_M c_L \bigl(1 + \log(M-L+1)\bigr)  
    \end{equation}
    and
    \begin{equation}\label{eq:sum-of-sines}
        \left\| \sum_{k=L}^M a_k b_k c_k \sin(k\, \cdot) \right\|_{L^1(\T)} \leq C b_M c_L \bigl(1 + \log(M-L+1)\bigr).  
    \end{equation}
\end{lemma}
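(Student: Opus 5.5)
The plan is to reduce everything to the classical $L^1$ bounds for the Dirichlet and conjugate Dirichlet kernels via Abel summation. Write $D_N(x) = \sum_{k=0}^N \cos(kx)$ and $\tilde D_N(x) = \sum_{k=1}^N \sin(kx)$. Each partial sum $\sum_{k=L}^{K}\cos(kx) = D_K - D_{L-1}$ is a Dirichlet-type sum over a block of length $K-L+1$; we bound it pointwise by $\min\{K-L+1,\, 1/|\sin(x/2)|\}$. Integrating over $\T$ then gives an $L^1$ norm of order $1+\log(K-L+1)$. The same holds for the sine sums, since $|\sum_{k=L}^K \sin(kx)| \le 1/|\sin(x/2)|$ and it is also bounded by the number of terms.

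Set $w_k = a_k b_k c_k$ and $S_K(x) = \sum_{j=L}^K \cos(jx)$. We do not try to make $w_k$ monotone. Instead we apply Abel summation three times, once for each factor. Let $T(x) = \sum_{k=L}^M a_k b_k c_k \cos(kx)$.

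First, sum by parts in the monotone factor $c_k$. This gives
\[
T = c_M \sum_{k=L}^M a_k b_k \cos(k\cdot) + \sum_{K=L}^{M-1} (c_K - c_{K+1}) \sum_{k=L}^K a_k b_k \cos(k\cdot).
\]
The coefficients $c_M$ and $c_K - c_{K+1}$ are nonnegative and sum to $c_L$. It therefore suffices to prove the following: for every $L \le K \le M$,
\[
\Bigl\|\sum_{k=L}^K a_k b_k \cos(k\cdot)\Bigr\|_{L^1} \lesssim b_K\bigl(1+\log(K-L+1)\bigr),
\]
since $b_K \le b_M$ and the logarithm is monotone in $K$.

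Second, repeat the Abel summation with the increasing sequence $b_k$, this time summing from the top. The coefficients $b_L$ and $b_{j+1} - b_j$ are nonnegative and telescope to $b_K$. This reduces the problem to bounding tail sums $\|\sum_{k=j}^{K} a_k \cos(k\cdot)\|_{L^1} \lesssim 1+\log(K-L+1)$ uniformly in $L \le j \le K$.

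Third, handle the concave-down sequence $a_k$. The restriction of $a$ to $[j,K]$ is still concave down, with peak at some $k_0$. Split the block at $k_0$. On the increasing part, Abel summation from the top again expresses the sum as a nonnegative combination of block Dirichlet sums, with total weight $a_{k_0} \le 1$. On the decreasing part, Abel summation from the bottom does the same, with total weight again at most $a_{k_0} \le 1$. Each block Dirichlet sum has $L^1$ norm $\lesssim 1+\log(K-L+1)$ by the first paragraph, and the triangle inequality for the $L^1$ norm closes this step.

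Finally, collect the nested bounds: $c_L \cdot b_M \cdot 2 \cdot C(1+\log(M-L+1))$. The sine case is identical, using the conjugate Dirichlet kernel. The only real obstacle is the bookkeeping. At each Abel step one must check that the coefficients are nonnegative and telescope to the claimed extreme value ($c_L$, $b_M$, and $a_{k_0} \le 1$ respectively), and that every block length stays at most $M-L+1$, so that the constant is universal.
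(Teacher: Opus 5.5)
Your proof is correct and follows essentially the paper's route. You apply Abel summation in the monotone factors, with nonnegative weights telescoping to $c_L$ and $b_M$, to reduce to block Dirichlet sums, and you bound their $L^1$ norms by combining the trivial bound with the $1/|\sin(x/2)|$ bound. The only differences are cosmetic: you sum by parts in $c_k$ before $b_k$, and you handle the concave-down $a_k$ with a third Abel summation on its two monotone pieces. The paper instead splits at $k_0$ and absorbs $a_k$ into $b_k$ on the increasing part and into $c_k$ on the decreasing part, then applies the $a_k \equiv 1$ case.
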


\begin{proof}
Let us first prove \eqref{eq:sum-of-cosines} in the case when the sequence $a_k$ takes only the value $1$. We use summation by parts to separate $b_k$'s out:
\begin{equation}\label{eq:sum-of-cosines-0}
\sum_{k=L}^M b_k c_k \cos(kx) = b_L \sum_{k=L}^M c_k \cos(kx) + \sum_{l=L+1}^M ( b_l - b_{l-1} ) \sum_{k=l}^M c_k \cos(kx).
\end{equation}
Since the sequence $b_k$ is increasing, we bound the $L^1$-norm of this function by 
    \begin{align}
        &b_L \left\| \sum_{k=L}^M c_k \cos(k\, \cdot) \right\|_{L^1(\T)} + \sum_{l=L+1}^M ( b_l - b_{l-1} ) \left\| \sum_{k=l}^M c_k \cos(k\, \cdot) \right\|_{L^1(\T)} \notag \\
        &\hspace{7cm} \leq b_M \sup_{L \leq l \leq M} \left\| \sum_{k=l}^M c_k \cos(k\, \cdot) \right\|_{L^1(\T)}. \label{eq:sum-of-cosines-1}
    \end{align}
    We apply summation by parts again and get 
    \begin{align}
        \left\| \sum_{k=l}^M c_k \cos(k\, \cdot) \right\|_{L^1(\T)} 
        &= \left\| \sum_{m=l}^{M-1} ( c_m - c_{m+1} ) \sum_{k=l}^m \cos(k\, \cdot) + c_M \sum_{k=l}^M \cos(k\, \cdot) \right\|_{L^1(\T)} \notag \\
        &\leq \sum_{m=l}^{M-1} ( c_m - c_{m+1} ) \left\| \sum_{k=l}^m \cos(k\, \cdot) \right\|_{L^1(\T)} + c_M \left\| \sum_{k=l}^M \cos(k\, \cdot) \right\|_{L^1(\T)} \notag \\
        &\leq c_l \sup_{l \leq m \leq M} \left\| \sum_{k=l}^m \cos(k\, \cdot) \right\|_{L^1(\T)}, \label{eq:sum-of-cosines-2}
    \end{align}
    where the last bound holds because the sequence $c_k$ is decreasing. To estimate the $L^1$ norm, we need two different bounds for the sum. The first one is a trivial bound 
    \begin{equation}
        \left| \sum_{k=l}^m \cos(kx) \right| \leq \sum_{k=l}^m \left| \cos(kx) \right| \leq m-l+1. 
        \label{sum_cos_bound_for_x_small}
    \end{equation}
    The second one uses a trigonometric identity 
    \begin{equation*}
        \sum_{k=l}^m \cos(kx) = \frac{ \sin ( ( m+ \frac{1}{2} ) x ) - \sin ( ( l- \frac{1}{2} ) x ) }{2 \sin(x / 2)}
    \end{equation*}
    for $x \notin 2 \pi \Z$. 
    Combining it with the inequality $| \sin (x/2)| \geq \frac{|x|}{4}$ for $x \in [-\pi, \pi]$, we have a second bound 
    \begin{equation}
        \left| \sum_{k=l}^m \cos(kx) \right| \leq \frac{4}{|x|}
        \label{sum_cos_bound_for_x_large}
    \end{equation}
    for $x \neq 0$. 

    Using the two bounds \eqref{sum_cos_bound_for_x_large} and \eqref{sum_cos_bound_for_x_small} on $x$ large and small, respectively, we have the following estimate: 
    \begin{align*}
        \left\| \sum_{k=l}^m \cos(k\, \cdot) \right\|_{L^1(\T)} 
        &= \int_{ |x| \leq \frac{1}{m-l+1}} \left| \sum_{k=l}^m \cos(kx) \right| \,\d x + \int_{  \frac{1}{m-l+1} \leq |x| \leq \pi } \left| \sum_{k=l}^m \cos(kx) \right| \, \d x \\
        &\leq \int_{ |x| \leq \frac{1}{m-l+1}} (m-l+1) \,\d x + 2 \int_{\frac{1}{m-l+1} }^\pi \frac{4}{x} \, \d x \\
        &= 2 + 8 \log \left( (m-l+1) \pi \right). 
    \end{align*}
    Applying this estimate to \eqref{eq:sum-of-cosines-2} and using \eqref{eq:sum-of-cosines-0}-\eqref{eq:sum-of-cosines-1}, we get 
    \begin{equation}\label{eq:sum-of-cosines-proof}
        \left\| \sum_{k=L}^M b_k c_k \cos(k\, \cdot) \right\|_{L^1(\T)} \leq C b_M c_L \bigl(1 + \log(M-L+1)\bigr).
    \end{equation}

    Now, we will prove \eqref{eq:sum-of-cosines} for a ``concave down" sequence $a_k$. Let $L \leq k_0 \leq M$ be a value as in Definition~\ref{def:concave-down}. Then 
    \[
    \left\| \sum_{k=L}^M a_k b_k c_k \cos(k\, \cdot) \right\|_{L^1(\T)} \leq \left\| \sum_{k=L}^{k_0} a_k b_k c_k \cos(k\, \cdot) \right\|_{L^1(\T)} + \left\| \sum_{k=k_0+1}^M a_k b_k c_k \cos(k\, \cdot) \right\|_{L^1(\T)}.
    \]
    For $L \leq k \leq k_0$, the sequence $a_k$ is increasing, and we can bound the first $L^1$ norm on the right-hand side by \eqref{eq:sum-of-cosines-proof}, but with the sequence $a_k b_k$ in place of $b_k$. Furthermore, for $k_0+1 \leq k \leq M$, the sequence $a_k$ is decreasing, and we bound the last $L^1$ norm by \eqref{eq:sum-of-cosines-proof} but replacing $c_k$ by $a_k c_k$. Hence, we bound the preceding expression by 
    \[
    C a_{k_0} b_{k_0} c_L \bigl(1 + \log(k_0-L+1)\bigr) + C b_M a_{k_0+1} c_{k_0+1} \bigl(1 + \log(M-k_0)\bigr).
    \]
    Recalling that the values $a_k$ are bounded by $1$, and the sequences $b_k$ and $c_k$ are monotonic, we estimate this expression by 
    \[
    C b_{M} c_L \Bigl(2 + \log(k_0-L+1) + \log(M-k_0)\Bigr) \leq 2 C b_{M} c_L \bigl(1 + \log(M-L+1)\bigr),
    \]
    which is the desired bound \eqref{eq:sum-of-cosines}.

    The bound \eqref{eq:sum-of-sines} is proved in exactly the same way, but using the identity 
    \begin{equation*}
        \sum_{k=l}^m \sin(kx) = \frac{ \cos ( ( l- \frac{1}{2} ) x ) - \cos ( ( m+ \frac{1}{2} ) x ) }{2 \sin (x/2)}
    \end{equation*}
    for $x \notin 2 \pi \Z$.
\end{proof}

Using the preceding result, we can estimate how a convolution with the function $P$ changes regularity. See Appendix~\ref{sec:besov space} for a review of the Littlewood--Paley theory. 

\begin{lemma}\label{Sch-est w/o t}
    Let $\gamma \in [0, \infty)$ and $f \in \CC^\gamma (\T)$. Then for any $m \in \N_0$ satisfying $m < \gamma + \alpha \ell_\star$, any $\widetilde{\gamma} \in (0, \infty) \setminus \N$ satisfying $0 \leq \widetilde{\gamma} - \gamma + m < \alpha \ell_\star$ and any $\kappa \in (0, \gamma - \widetilde{\gamma} + \alpha \ell_\star - m]$ one has 
    \begin{equation}\label{eq:Sch-est w/o t}
        \left\| \partial_x^m (P_t * f) \right\|_{\CC^{\widetilde{\gamma}}} \leq C t^{- \frac{\beta}{\alpha} ( \widetilde{\gamma} - \gamma + m + \kappa ) } \| f \|_{\CC^{\gamma}}
    \end{equation}
    uniformly over $t > 0$, where $C = C(\kappa, m) > 0$. 

    Furthermore, for any $\varepsilon \in [0, \beta]$ one has 
    \begin{equation}\label{eq:Sch-est w/o t for t-t'}
        \left\| \partial_x^m (P_{t'} - P_t) * f \right\|_{\CC^{\widetilde{\gamma}}} \leq C (t'-t)^{\varepsilon} t^{ - \frac{\beta}{\alpha} ( \widetilde{\gamma} - \gamma +m +\kappa) - \varepsilon} \| f \|_{\CC^\gamma},
    \end{equation}
    uniformly over $0 < t < t'$.
\end{lemma}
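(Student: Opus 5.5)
The plan is to use the Littlewood--Paley characterisation of $\CC^{\widetilde\gamma} = B^{\widetilde\gamma}_{\infty,\infty}$ for non-integer $\widetilde\gamma$ (Appendix~\ref{sec:besov space}). Write $\Delta_j$ for the dyadic blocks with symbols supported on annuli $|k|\sim 2^j$; $\Delta_{-1}$ is supported near $0$. Since convolution with $P_t$ and $\partial_x^m$ are Fourier multipliers, they commute with $\Delta_j$. Let $\widetilde\Delta_j$ be a slightly enlarged block with $\widetilde\Delta_j\Delta_j=\Delta_j$. Then
\[
\Delta_j \partial_x^m (P_t * f) = \bigl(\widetilde\Delta_j \partial_x^m P_t\bigr) * \Delta_j f,
\]
and Young's inequality gives $\|\Delta_j \partial_x^m (P_t*f)\|_{L^\infty} \le \|\widetilde\Delta_j\partial_x^m P_t\|_{L^1}\, 2^{-j\gamma}\|f\|_{\CC^\gamma}$. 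The whole proof therefore reduces to the kernel estimate
\begin{equation*}
\|\widetilde\Delta_j\partial_x^m P_t\|_{L^1(\T)} \lesssim (1+j)\,2^{jm}\bigl(1\wedge (t^{\beta}2^{j\alpha})^{-\ell_\star}\bigr).
\end{equation*}

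To prove this estimate I would expand the localised kernel as the finite trigonometric sum $\sum_{2^{j-1}\lesssim |k|\lesssim 2^{j+1}} \chi(2^{-j}k)(ik)^m\phi(t^\beta|k|^\alpha)e^{ikx}$. Splitting into $k>0$ and $k<0$ turns it into cosine and sine sums, and I would apply Lemma~\ref{L^1-bound_for_sum_of_cos/sin} with $a_k=\chi(2^{-j}k)$ and $b_k=k^m$ (increasing), $c_k=\phi(t^\beta k^\alpha)$ (decreasing and positive by Assumption~\ref{assump:function-phi}). The sequence $a_k$ is a $[0,1]$-valued bump, which is concave down if $\chi$ is chosen unimodal. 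This yields the bound $b_Mc_L(1+\log(M-L+1))\lesssim 2^{jm}\phi(t^\beta 2^{(j-1)\alpha})(1+j)$. The decay $\phi(x)\lesssim 1\wedge x^{-\ell_\star}$ then gives the displayed estimate. The block $j=-1$ is handled directly, since it contains finitely many modes, each bounded by $\phi(0)$.

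Next I would sum over scales. We need $\sup_j 2^{j\widetilde\gamma}\|\Delta_j\partial_x^m(P_t*f)\|_{L^\infty}$, which is bounded by
\[
\|f\|_{\CC^\gamma}\sup_j (1+j)\,2^{j(\widetilde\gamma-\gamma+m)}\bigl(1\wedge (t^\beta 2^{j\alpha})^{-\ell_\star}\bigr).
\]
Set $\theta=\widetilde\gamma-\gamma+m\in[0,\alpha\ell_\star)$, and absorb the logarithm as $(1+j)\lesssim_\kappa 2^{j\kappa}$. For $2^j\le t^{-\beta/\alpha}$ the term is at most $2^{j(\theta+\kappa)}\le t^{-\frac\beta\alpha(\theta+\kappa)}$. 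For larger $j$ the exponent is $\theta+\kappa-\alpha\ell_\star\le 0$, which is exactly where the condition $\kappa\le \gamma-\widetilde\gamma+\alpha\ell_\star-m$ enters. The term is then maximised at $2^j\approx t^{-\beta/\alpha}$ and gives the same power of $t$. For $t$ large the low-frequency block contributes only $O(1)$, which is fine whenever the claimed power of $t$ is nonnegative. For $t\ge1$ I may need to note separately that bounded $t$ is the relevant regime, or that $\widetilde\gamma-\gamma+m+\kappa>0$ handles it. This proves \eqref{eq:Sch-est w/o t}.

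The bound \eqref{eq:Sch-est w/o t for t-t'} follows the same route with $c_k=\phi(t^\beta k^\alpha)-\phi(t'^\beta k^\alpha)$. This sequence is positive because $\phi$ is strictly decreasing. I expect the main obstacle to be checking its monotonicity in $k$, so that Lemma~\ref{L^1-bound_for_sum_of_cos/sin} still applies. As a first attempt I would write $c_k=\int_{t^\beta}^{t'^\beta}k^\alpha(-\phi')(sk^\alpha)\,\d s$ and use that $-\phi'$ is positive and decreasing; if monotonicity fails, I would place the factor in $a_k$ after normalising, or fall back on the split from Lemma~\ref{L^1-bound of P-time}. In the regime $t'<2t$, the mean value theorem gives $c_k\lesssim (t'-t)^{\beta}k^\alpha(1\wedge(t^\beta k^\alpha)^{-\ell_\star-1})$, which costs an extra factor $2^{j\alpha}t^{-\beta}$ relative to before. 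I would interpolate this with the trivial bound (the sum of the two terms, using $t'>t$) to get a factor $(t'-t)^\varepsilon t^{-\varepsilon}$. The regime $t'\ge2t$ is immediate from \eqref{eq:Sch-est w/o t} applied at $t$ and $t'$.
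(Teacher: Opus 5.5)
Your proof of \eqref{eq:Sch-est w/o t} follows the paper's route: Littlewood--Paley blocks, Lemma~\ref{L^1-bound_for_sum_of_cos/sin} applied to the localised kernel, $1+j\lesssim 2^{j\kappa}$, and a split at $2^j\approx t^{-\beta/\alpha}$. Putting $\partial_x^m$ on the kernel via $b_k=k^m$, and using the lemma also at low frequencies instead of $\|P_t\|_{L^1}\lesssim 1$, are harmless variations. For large $t$, restricting to bounded times is the right resolution. Positivity of $\widetilde\gamma-\gamma+m+\kappa$ cannot help, because it is exactly what makes the right-hand side decay. Indeed, for $m=0$ the block $\Delta_{-1}$ carries the zero mode $\phi(0)\widehat f(0)$, so for $f\equiv 1$ the left-hand side of \eqref{eq:Sch-est w/o t} is a nonzero constant while the right-hand side tends to $0$. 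The estimate holds for $t\in(0,T]$ with $C=C(T)$, which is all that is used later. It also holds for all $t>0$ when $m\geq 1$, since then only $k=\pm1$ survive in $\Delta_{-1}$, with weight $\phi(t^\beta)\lesssim t^{-\beta\ell_\star}$. The paper's claim that its block bound covers $j=-1$ has the same slip.

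The genuine gap is in \eqref{eq:Sch-est w/o t for t-t'}, at the step you flagged: $c_k=\phi(t^\beta k^\alpha)-\phi((t')^\beta k^\alpha)$ need not be decreasing on the annulus.
\begin{itemize}
\item \emph{Why monotonicity fails.} Write $c_k=g(k^\alpha)$ with $g(x)=\phi(ax)-\phi(bx)$, $a=t^\beta<b=(t')^\beta$. Then $x g'(x)=h(bx)-h(ax)$ with $h(y)=y(-\phi'(y))$. For $\phi(y)=e^{-y}$, and by continuity for the Mittag--Leffler kernel with $\beta$ near $1$, $h$ increases on $[0,1)$. So when $t^\beta 2^{j\alpha}=1$ and $t'$ is close to $t$, $c_k$ increases for $k$ near $\tfrac{3}{4}2^j$.
\item \emph{Why your fallbacks do not repair it.} ``$-\phi'$ decreasing'' does not beat the growing factor $k^\alpha$ in your integral representation. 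The mean-value bound is pointwise, so by itself it only controls $\sum_k|c_k|$ and loses a factor $2^j$. Moving $c_k$ into $a_k$ requires $\varphi(2^{-j}k)c_k$ to be unimodal, and a product of unimodal sequences need not be. Lemma~\ref{L^1-bound of P-time} bounds only the whole kernel in $L^1$, with no derivatives allowed when $\ell_\star=1$ and $\alpha<2$. It therefore gives no decay in $j$ to absorb $2^{j(\widetilde\gamma-\gamma+m)}$.
\item \emph{The paper's argument fails here too.} It claims the intervals $[t^\beta k^\alpha,(t')^\beta k^\alpha]$ are disjoint for $t<t'<c_\star t$. That needs $(t'/t)^{\beta/\alpha}\leq 1+1/k$, which is false for $k\geq 2$ when $t'/t$ is near $c_\star$. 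Disjointness would not by itself give monotonicity anyway, since the later intervals are longer.
\end{itemize}

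The fix is to take your integral representation outside the norm, so that no monotonicity of $c_k$ is needed. Write $\Delta_j\partial_x^m(P_{t'}-P_t)=\int_{t^\beta}^{(t')^\beta}\Delta_j\partial_x^m Q_s\,\d s$ with $\widehat{Q_s}(k)=|k|^\alpha\phi'(s|k|^\alpha)$, and apply Minkowski's inequality. For each fixed $s$, apply Lemma~\ref{L^1-bound_for_sum_of_cos/sin} with $a_k$ the dyadic cutoff, $b_k=k^{m+\alpha}$, and $c_k=-\phi'(sk^\alpha)$, which is positive and decreasing by Assumption~\ref{assump:function-phi}. This gives $\|\Delta_j\partial_x^m(P_{t'}-P_t)\|_{L^1}\lesssim (t'-t)^\beta(1+j)\,2^{j(m+\alpha)}\bigl(1\wedge(t^\beta 2^{j\alpha})^{-\ell_\star-1}\bigr)$ for all $0<t<t'$. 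From there your summation over scales, the reduction to $(t'-t)^\varepsilon t^{-\varepsilon}$ for $t'<2t$, and the triangle inequality for $t'\geq 2t$ go through as planned.
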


We note that the derivative on the left-hand side is always defined in terms of distributions, and we do not have to restrict $m$. We did it only because we work with functions in our applications, i.e., with $\widetilde{\gamma} > 0$. Although we do not characterize the $\CC^n$ spaces as the Besov spaces, we can use the fact that $\CC^n \subsetneq \CB^n_{\infty, \infty}$ for $n \in \N_0$ to allow $\gamma$ to be an integer in the assumption of Lemma~\ref{Sch-est w/o t}. 

\begin{proof}
We are going to use the characterization of the spaces $\CC^\gamma$ in terms of the Littlewood--Paley theory as explained in Appendix~\ref{sec:besov space}. We have 
\[
\left\| \partial_x^m (P_t * f) \right\|_{\CC^{\widetilde{\gamma}}} \lesssim \left\| P_t * f \right\|_{\CC^{\widetilde{\gamma} + m}},
\]
and we need to estimate the Littlewood--Paley blocks $\Delta_j ( P_t * f)$, what we will do in the two cases: $t < 2^{ -j \frac{\alpha}{\beta} }$ and $t \geq 2^{ -j \frac{\alpha}{\beta} }$.

First, we deal with the case $t < 2^{ -j \frac{\alpha}{\beta} }$. We have
    \begin{equation}
        \| \Delta_j ( P_t * f) \|_{L^\infty}
        = \left\| \sum_{k \in \Z } \varphi(2^{-j} k) \widehat{P_t}(k) \widehat{f}(k) e^{ik \cdot } \right\|_{L^\infty} = \left\| P_t \ast \Delta_j f \right\|_{L^\infty} \leq \left\| P_t \right\|_{L^1} \left\| \Delta_j f \right\|_{L^\infty} \label{eq: block bound-t small},
    \end{equation}
    where the last bound holds by Young's convolution inequality. Recalling that $\left\| \Delta_j f \right\|_{L^\infty} \leq 2^{-\gamma j} \| f \|_{\CC^\gamma}$, we estimate it furthermore by  
    \begin{align*}
        2^{-\gamma j} \left\| P_t \right\|_{L^1} \| f \|_{\CC^\gamma} &\lesssim 2^{-\gamma j} \| f \|_{\CC^\gamma},
    \end{align*}
    where we used Lemma~\ref{L^1-bound of P} with the value $m=0$. Since $2^j < t^{ - \frac{\beta}{\alpha} }$, we conclude that
    \[
    2^{ j (\widetilde{\gamma} + m)} \left\| \Delta_j (P_t * f ) \right\|_{L^\infty} \lesssim 2^{(\widetilde{\gamma} + m-\gamma) j} \| f \|_{\CC^\gamma} \lesssim t^{ - \frac{\beta}{\alpha} ( \widetilde{\gamma} - \gamma + m ) } \| f \|_{\CC^\gamma}
    \]
    for $\widetilde{\gamma} + m \geq \gamma$.

     In the case $t \geq 2^{ -j \frac{\alpha}{\beta} }$, we bound the Littlewood--Paley block in the following way: 
    \begin{align}
        \| \Delta_j (P_t * f) \|_{L^\infty}
        &= \left\| \sum_{k \in \Z } \varphi(2^{-j} k) \widehat{P_t}(k) \widehat{f}(k) e^{ikx} \right\|_{L^\infty} \notag \\
          &= \left\| \sum_{k \in \Z } \varphi(2^{-j} k) \widetilde{\varphi}(2^{-j} k) \widehat{P_t}(k) \widehat{f}(k) e^{ikx} \right\|_{L^\infty} \notag \\
        &= \| \Delta_j P_t \ast \widetilde{\Delta}_j f \|_{L^\infty} \leq \| \Delta_j P_t \|_{L^1} \| \widetilde{\Delta}_j f \|_{L^\infty}, \label{eq: block bound-t big}
    \end{align}
    where we again used Young's convolution inequality. Here, $\widetilde{\Delta}_j f$ denotes the Littlewood--Paley block associated with a different dyadic partition of unity, determined by $\widetilde{\varphi}$ and $\widetilde{\chi}$, chosen such that $\widetilde{\varphi}$ and $\widetilde{\chi}$ are identically equal to $1$ on the supports of $\varphi$ and $\chi$, respectively. One can see the explanation in Remark~\ref{rem: dyadic partition of unity}.  

To proceed with the proof, we need the following estimate
    \begin{equation}
        \left\| \Delta_j P_t \right\|_{L^1(\T)} \leq C \frac{1 + \log 2^j}{1+ (t^{\beta} 2^{j \alpha})^{\ell_\star}}
        \label{eq:L^1-norm of LP-block  of derivative of P}
    \end{equation}
    for any $j \geq -1$ and with a constant $C > 0$. Let us now prove it.
    
    For $j=-1$, we can bound it directly by a constant. For $j \geq 0$, we use the definition of the Littlewood--Paley block \eqref{eq:PL-block} and \eqref{eq:P-Fourier} to write
    \begin{align}\label{eq:P-block-proof}
        \Delta_j P_t (x) 
        &= \sum_{k \in \mathbb{Z}} \varphi(2^{-j} k) \widehat{P} ( t , k) e^{ikx} 
        = 2 \displaystyle\sum_{ \frac{3}{4} 2^j \leq k \leq \frac{8}{3} 2^{j} } \varphi(2^{-j} k) \phi( t^\beta k^\alpha) \cos(kx),
    \end{align}
where we used the fact that $\varphi$ is supported in the annulus $\{ \xi \in \R : \frac{3}{4} \leq |\xi| \leq \frac{8}{3} \}$. Moreover, $\varphi$ may be chosen to be increasing on $[0,\xi_0]$ and decreasing on $[\xi_0, \infty)$ for some $\xi_0 \in \R$. For example, the cutoff function constructed in Appendix~\ref{sec:besov space} has these properties. In addition, $\phi$ is decreasing by the assumption made at the beginning of this section. Thus we apply Lemma~\ref{L^1-bound_for_sum_of_cos/sin} with $a_k = \varphi(2^{-j} k)$, $b_k = 1$, and $c_k = \phi( t^\beta k^\alpha)$ to get 
    \begin{align*}
        \left\| \Delta_j P_t  \right\|_{L^1(\T)} 
        &\lesssim \phi( t^\beta ( \tfrac{3}{4} 2^{j} )^\alpha ) \big( 1 + \log 2^{j} \big).
    \end{align*} 
    Using \eqref{eq:assumption-on-phi}, we estimate it by a constant times $\frac{1 + \log 2^{j}}{1+ (t^{\beta} 2^{j \alpha})^{\ell_\star}}$ and get the bound \eqref{eq:L^1-norm of LP-block  of derivative of P}.
    
     Now we go back to the proof of the lemma. Applying \eqref{eq:L^1-norm of LP-block  of derivative of P} to \eqref{eq: block bound-t big} yields 
    \begin{align*}
        2^{ j (\widetilde{\gamma} + m)} \left\| \Delta_j ( P_t * f) \right\|_{L^\infty} &\lesssim 2^{ j (\widetilde{\gamma} + m - \gamma)} \frac{1 + \log 2^j}{1+ (t^{\beta} 2^{j \alpha})^{\ell_\star}}  \| f \|_{\CC^\gamma} \\
        &\lesssim 2^{j(\widetilde{\gamma} - \gamma - \ell_\star \alpha + m)} t^{ -\ell_\star \beta} \big( 1 + \log 2^{j}) \| f \|_{\CC^\gamma},
    \end{align*}
    where in the last bound we used our assumption $t^{\beta} 2^{j \alpha} \geq 1$. Bounding $1 + \log 2^j \lesssim 2^{j \kappa}$ for $\kappa > 0$, we estimate the preceding expression by a constant times 
    \begin{align*}
        2^{j(\widetilde{\gamma} - \gamma - \ell_\star \alpha + m + \kappa)} t^{ -\ell_\star \beta} \| f \|_{\CC^\gamma} \lesssim t^{ - \frac{\beta}{\alpha} ( \widetilde{\gamma} - \gamma + m + \kappa )} \| f \|_{\CC^\gamma},
    \end{align*}
where we used our assumption $2^{j} \geq t^{-\frac{\beta}{\alpha}}$ with $\kappa \in (0, \gamma - \widetilde{\gamma} + \alpha \ell_\star - m]$.

    Combining the preceding bounds on the Littlewood--Paley blocks of $P_{t} * f$, we get claim \eqref{eq:Sch-est w/o t}.

    To prove \eqref{eq:Sch-est w/o t for t-t'}, we set $c_\star = (\frac{7}{4})^{\alpha/\beta} > 1$ and consider the two cases $t' \geq c_\star t$ and $t' > c_\star t$. This choice of the constant will be clear from our analysis below and will allow us to use Lemma~\ref{L^1-bound_for_sum_of_cos/sin}.
    
    For $t' \geq c_\star t$, we use \eqref{eq:Sch-est w/o t} and get the trivial bound 
    \begin{align*}
    &\left\| \partial_x^m (P_{t'} - P_t) * f \right\|_{\CC^{\widetilde{\gamma}}} \leq \left\| \partial_x^m (P_{t'} * f)\right\|_{\CC^{\widetilde{\gamma}}} + \left\| \partial_x^m (P_t * f) \right\|_{\CC^{\widetilde{\gamma}}} \\
    &\qquad \lesssim \left((t')^{- \frac{\beta}{\alpha} ( \widetilde{\gamma} - \gamma + m + \kappa ) } + t^{- \frac{\beta}{\alpha} ( \widetilde{\gamma} - \gamma + m + \kappa ) }\right) \| f \|_{\CC^{\gamma}} \lesssim t^{- \frac{\beta}{\alpha} ( \widetilde{\gamma} - \gamma + m + \kappa ) } \| f \|_{\CC^{\gamma}}.
    \end{align*}
    We use $t \leq \frac{t'-t}{c_\star-1}$ to estimate it by a constant times $(t'-t)^\varepsilon t^{- \frac{\beta}{\alpha} ( \widetilde{\gamma} - \gamma + m + \kappa ) - \varepsilon} \| f \|_{\CC^{\gamma}}$ for any $\varepsilon \geq 0$. 

    Now, we consider the case $t < t' < c_\star t$, in which we bound 
\[
\left\| \partial_x^m (P_{t'} - P_t) * f \right\|_{\CC^{\widetilde{\gamma}}} \lesssim \left\| (P_{t'} - P_t) * f \right\|_{\CC^{\widetilde{\gamma} + m}},
\]
and we need to estimate the Littlewood--Paley blocks of the function inside the norm. As above, we are going to bound the blocks in the two regimes: $t < 2^{ -j \frac{\alpha}{\beta} }$ and $t \geq  2^{ -j \frac{\alpha}{\beta} }$. In the case $t < 2^{ -j \frac{\alpha}{\beta} }$ we proceed as in \eqref{eq: block bound-t small} and using Lemma~\ref{L^1-bound of P-time} we get
\begin{align*}
        2^{ j (\widetilde{\gamma} + m)} \| \Delta_j ((P_{t'} - P_t) * f) \|_{L^\infty} &\leq 2^{ j (\widetilde{\gamma} + m)} \| (P_{t'} - P_t) \|_{L^1} \left\| \Delta_j f \right\|_{L^\infty} \\
        &\lesssim (t'-t)^\varepsilon 2^{ j (\widetilde{\gamma} - \gamma + m)} t^{ - \varepsilon} \| f \|_{\CC^\gamma}.
\end{align*}
For $\widetilde{\gamma} - \gamma + m \geq 0$ we recall that $t < 2^{ -j \frac{\alpha}{\beta} }$ and estimate it by 
\begin{align*}
        2^{ j (\widetilde{\gamma} + m)} \| \Delta_j ((P_{t'} - P_t) * f) \|_{L^\infty} 
        \lesssim (t'-t)^\varepsilon t^{ - \frac{\beta}{\alpha} ( \widetilde{\gamma} - \gamma + m ) - \varepsilon} \| f \|_{\CC^\gamma}.
\end{align*}

In the case $t \geq 2^{ -j \frac{\alpha}{\beta} }$, we can derive a bound similar to \eqref{eq:L^1-norm of LP-block  of derivative of P}. As in \eqref{eq:P-block-proof}, we get 
\begin{align*}
        \Delta_j (P_{t'} - P_t) (x)  = 
            - 2 \displaystyle\sum_{ \frac{3}{4} 2^j \leq k \leq \frac{8}{3} 2^{j} } \varphi(2^{-j} k) \bigl(\phi( t^\beta k^\alpha) - \phi( (t')^\beta k^\alpha)\bigr) \cos(kx).
    \end{align*}
The only difference from the proof of \eqref{eq:L^1-norm of LP-block  of derivative of P} is that now we are working with $\phi( t^\beta k^\alpha) - \phi( (t')^\beta k^\alpha)$ in place of $\phi( t^\beta k^\alpha)$. To use Lemma~\ref{L^1-bound_for_sum_of_cos/sin}, as we did above, we need the sequence  $c_k = \phi( t^\beta k^\alpha) - \phi( (t')^\beta k^\alpha)$ to be positive and decreasing. To see this, we write 
\[
c_k = - \int_{t^\beta k^\alpha}^{(t')^\beta k^\alpha} \phi'(\lambda)\, \d \lambda.
\]
By Assumption~\ref{assump:function-phi}, the function $-\phi'(\lambda)$ is positive and hence the sequence $c_k$ is positive. Moreover, our choice of $c_\star$ yields $(t')^\beta k^\alpha \leq t^\beta (k+1)^\alpha$ for any $\frac{3}{4} 2^j \leq k \leq \frac{8}{3} 2^{j}$, and the integration intervals for different $c_k$ do not intersect. Since $-\phi'(\lambda)$ is decreasing, we get $c_k \geq c_{k+1}$. Hence, Lemma~\ref{L^1-bound_for_sum_of_cos/sin} yields 
\begin{align*}
        \left\| \Delta_j (P_{t'} - P_t) \right\|_{L^1(\T)} 
        &\lesssim \bigl(\phi( t^\beta ( \tfrac{3}{4} 2^{j} )^\alpha ) - \phi( (t')^\beta ( \tfrac{3}{4} 2^{j} )^\alpha )\bigr) \big( 1 + \log 2^{j} \big) \\
        &\lesssim (t'-t)^\beta 2^{j\alpha} \frac{1 + \log 2^{j}}{1+ (t^{\beta} 2^{j \alpha})^{\ell_\star+1}},
    \end{align*} 
where we used \eqref{eq:assumption-on-phi}. Using $1 + \log 2^{j} \lesssim 2^{j\kappa}$ for $\kappa > 0$, we bound it furthermore by a constant multiple of 
\[
(t'-t)^\beta 2^{-j \alpha \ell_\star + j\kappa} t^{- \beta(\ell_\star+1)}.
\]
Recall that we have $t < t' < c_\star t$, which means that $t'-t \leq  (c_\star -1)t$ and this expression is bounded by a constant times
\[
(t'-t)^\varepsilon 2^{-j \alpha \ell_\star + j\kappa} t^{- \beta\ell_\star - \varepsilon}
\]
for any $\varepsilon \in [0,\beta]$. Hence, we get 
    \begin{align*}
        2^{ j (\widetilde{\gamma} + m)} \left\| \Delta_j (P_{t'} - P_t) * f \right\|_{L^\infty} 
        &\lesssim (t'-t)^\varepsilon 2^{j(\widetilde{\gamma} - \gamma - \ell_\star \alpha + m + \kappa)} t^{- \beta\ell_\star - \varepsilon} \| f \|_{\CC^\gamma}.
    \end{align*}
Using assumption $t \geq c 2^{ -j \frac{\alpha}{\beta} }$, we bound it by a constant multiple of 
\[
(t'-t)^\varepsilon t^{ - \frac{\beta}{\alpha} ( \widetilde{\gamma} - \gamma + m + \kappa ) - \varepsilon} \| f \|_{\CC^\gamma},
\]
for any $\kappa \in (0, \gamma - \widetilde{\gamma} + \alpha \ell_\star - m]$.

The estimates on the Littlewood--Paley blocks give the desired bound.
\end{proof}

\subsection{Schauder-type estimates for fractional heat kernel}

We are going to consider convolutions of the fractional kernel with functions given by the special form $\partial^{1-\beta}_s f(s,\cdot)$. That is why we need to prove a version of Schauder-type estimates for such functions. We will do it only for the fractional heat kernel $P_{\beta, \alpha}$ defined in \eqref{eq:heat-kernel} rather than a general kernel considered above. Such functions arise when we apply the Duhamel principle to solve the fractional equation
\begin{equation}\label{eq:fractional-equation}
    \partial_t^\beta X(t,x) = - (-\Delta)^{\frac{\alpha}{2}} X(t,x) + f(t,x)
\end{equation}
with the initial data $X(0,\cdot) \equiv 0$. The solution can be written as 
\begin{equation*}
    X(t,x) = \int_0^t \left( P_{\beta, \alpha}(t-s) \ast \partial^{1-\beta}_s f(s) \right)(x) \, \d s,
\end{equation*}
using the fractional heat kernel. 
This formula is well-defined for a function $f$ which is $\CC^1$ in the time variable (see the Definition~\ref{def: Caputo derivative}), but for functions of lower temporal regularity the expression on the right-hand side is undefined. We can make sense of it by using the fractional integration by parts \eqref{eq:fractional-IBP}. Namely, for a temporarily $\CC^1$ function $f$ we can write
\begin{equation}\label{eq:heat-solution}
    X(t,x) = \int_0^t \left(\RL_{t-, s}^{1-\beta} P_{\beta, \alpha}(t-s) \ast f(s) \right)(x) \, \d s + \left[\bigl(I_{t-, s}^{\beta} P_{\beta, \alpha}(t-s) \ast f(s) \bigr)(x) \right]_{s=0}^t.
\end{equation}
The expression $\RL_{t-, s}^{1-\beta} P_{\beta, \alpha}(t-s)$ means that the Riemann--Liouville derivative $\RL_{t-}^{1-\beta}$ is applied to the variable $s$ in the kernel. In the case when the temporal regularity of $f$ is lower than $\CC^1$, we use \eqref{eq:heat-solution} as the definition of the solution to the fractional equation \eqref{eq:fractional-equation} whenever it is defined. We show in Appendix~\ref{sec:Duhamel} that the kernels on the right-hand side are well defined and the expression makes sense if the kernels can be convolved with the function $f$.

The following result studies the regularity of this function. 

\begin{prop} \label{sch est for fractional heat eq}
    Let the values $\gamma, \gamma_{\ini} \in [0, \infty)$, $\widetilde{\gamma} \in (0, \infty) \setminus \N$ and $m \in \N_0$ satisfy $\gamma -m < \widetilde{\gamma} < \min( \gamma, \gamma_{\ini} ) + \alpha -m$ and let $0 < \kappa < \min( \gamma, \gamma_{\ini} ) + \alpha -m - \widetilde{\gamma}$. For a fixed $T > 0$, let $f$ be a continuous function on $[0,T] \times \T$ with $f(0, \cdot) \in \CC^{\gamma_{\ini}}( \T )$ and $f(s, \cdot) \in \CC^\gamma( \T )$ for all $s \in (0, T]$, and satisfying 
    \[
    \|f\|_{ \CC^{\gamma, \lambda}_T} < \infty
    \]
    for some $\lambda \in [0,1)$, where we use the norm \eqref{def: C gamma t-lambda}. Then there is a constant $C = C(\kappa, m)$ such that 
    \begin{equation}\label{eq:sch est for fractional heat eq}
        \| \partial_x^m X(t) \|_{\CC^{\widetilde{\gamma}} } \leq C \left(t^{ \beta - \frac{\beta}{\alpha}(\widetilde{\gamma} - \gamma + m + \kappa) - \lambda } \|f\|_{ \CC^{\gamma, \lambda}_t} + t^{ \beta - \frac{\beta}{\alpha} ( \widetilde{\gamma} - \gamma_{\ini} + m +\kappa ) } \left\| f(0) \right\|_{\CC^{\gamma_{\ini}}}\right)
    \end{equation}
    holds uniformly over $t \in (0, T]$ and 
    \begin{align}\label{eq:sch est for fractional heat eq for t - t'}
        & \| \partial_x^m X(t) - \partial_x^m X(t') \|_{\CC^{\widetilde{\gamma}} } \\
          &\qquad \leq C (t-t')^{\varepsilon} \left((t')^{ \beta - \frac{\beta}{\alpha}(\widetilde{\gamma} - \gamma + m + \kappa) - \lambda - \varepsilon } \|f\|_{ \CC^{\gamma, \lambda}_t } + (t')^{\beta - \frac{\beta}{\alpha} ( \widetilde{\gamma} - \gamma_{\ini} +m +\kappa ) - \varepsilon } \left\| f(0) \right\|_{\CC^{\gamma_{\ini}}} \right) \notag
    \end{align}
    holds uniformly over $t' < t \in (0, T]$ for any $\varepsilon > 0$ small enough.

    Furthermore, we have $\partial_x^m X \in \CC([0,T]; \CC^{ \widetilde{\gamma}}(\T))$ if 
    \[
    \beta - \frac{\beta}{\alpha}(\widetilde{\gamma} - \gamma + m + \kappa) - \lambda > 0, \qquad \beta - \frac{\beta}{\alpha} ( \widetilde{\gamma} - \gamma_{\ini} +m +\kappa ) > 0,
    \]
    and $\partial_x^m X \in \CC((0,T]; \CC^{ \widetilde{\gamma}}(\T))$ otherwise.
\end{prop}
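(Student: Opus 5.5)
We start from the representation \eqref{eq:heat-solution} and identify its kernels explicitly in Fourier space. By Lemma~\ref{lem:frac_D_of_ML}, the Riemann--Liouville derivative of the kernel has Fourier coefficients
\[
\frac{1}{\sqrt{2\pi}}\,\beta (t-s)^{\beta-1} E'_\beta\bigl(-(t-s)^\beta |k|^\alpha\bigr).
\]
Thus $\RL^{1-\beta}_{t-,s}P_{\beta,\alpha}(t-s)=\beta(t-s)^{\beta-1}\widetilde P_{t-s}$, where $\widetilde P$ has the form \eqref{eq:P-Fourier} with $\phi=-E'_\beta$. By Lemma~\ref{lem:ML-bound} and the complete monotonicity of $E_\beta(-\cdot)$, this $\phi$ satisfies Assumption~\ref{assump:function-phi} with $\ell_\star=2$ (up to the sign convention and a constant multiple). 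For the boundary term, Lemma~\ref{lem:LM-integral} gives $I^\beta_{t-,s}P_{\beta,\alpha}(t-s)$ with Fourier coefficients $\frac{1}{\sqrt{2\pi}}(1-E_\beta(-(t-s)^\beta|k|^\alpha))/|k|^\alpha$. At $s=t$ this vanishes. At $s=0$ it equals $t^\beta\Phi_t$, where $\widehat\Phi_t(k)=\frac{1}{\sqrt{2\pi}}\phi_1(t^\beta|k|^\alpha)$ and $\phi_1(x)=(1-E_\beta(-x))/x$. This $\phi_1$ is positive and decreasing, $-\phi_1'$ is positive and decreasing, and $\phi_1$ decays like $1/x$, so it fits Assumption~\ref{assump:function-phi} with $\ell_\star=1$. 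The $k=0$ mode is handled by \eqref{eq:LM-integral-2}. Hence
\[
X(t)=\beta\int_0^t (t-s)^{\beta-1}\,\widetilde P_{t-s}*f(s)\,\d s \;-\; t^\beta\,\Phi_t* f(0).
\]

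For the bound \eqref{eq:sch est for fractional heat eq}, we apply Lemma~\ref{Sch-est w/o t} to each term. The initial term is bounded by
\[
t^\beta\cdot t^{-\frac\beta\alpha(\widetilde\gamma-\gamma_{\ini}+m+\kappa)}\|f(0)\|_{\CC^{\gamma_{\ini}}},
\]
which is admissible since $\ell_\star=1$ and $\widetilde\gamma-\gamma_{\ini}+m+\kappa<\alpha$. For the integral term, Lemma~\ref{Sch-est w/o t} with $\ell_\star=2$ gives the integrand bound
\[
(t-s)^{\beta-1-\frac\beta\alpha(\widetilde\gamma-\gamma+m+\kappa)}\,s^{-\lambda}\,\|f\|_{\CC^{\gamma,\lambda}_t}.
\]
Since $\widetilde\gamma-\gamma+m+\kappa<\alpha$, the exponent of $t-s$ exceeds $-1$. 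Together with $\lambda<1$, a Beta integral yields $t^{\beta-\frac\beta\alpha(\cdots)-\lambda}$. This also explains why the hypothesis is stated with $\min(\gamma,\gamma_{\ini})$ and the single exponent $\alpha$, even though $\ell_\star=2$ would allow more in the integral term.

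For the time increment \eqref{eq:sch est for fractional heat eq for t - t'}, take $t'<t$ and split $X(t)-X(t')$ into three pieces.
\begin{enumerate}[(a)]
\item The integral over $[t',t]$ of $(t-s)^{\beta-1}\widetilde P_{t-s}*f(s)$. Bound it as above, but restricted to $[t',t]$. This produces a factor $(t-t')^{\beta-\frac\beta\alpha(\cdot)}$ times $(t')^{-\lambda}$, and the surplus is converted to $(t-t')^\varepsilon(t')^{\cdots-\varepsilon}$ using $t-t'\le t$ when the exponent is positive. When the exponent is negative, split into the cases $t\ge 2t'$ and $t<2t'$.
\item The integral over $[0,t']$ of $[(t-s)^{\beta-1}\widetilde P_{t-s}-(t'-s)^{\beta-1}\widetilde P_{t'-s}]*f(s)$. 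Write this as $[(t-s)^{\beta-1}-(t'-s)^{\beta-1}]\widetilde P_{t-s}$ plus $(t'-s)^{\beta-1}(\widetilde P_{t-s}-\widetilde P_{t'-s})$. Treat the first part with the elementary bound $|(t-s)^{\beta-1}-(t'-s)^{\beta-1}|\lesssim(t-t')^\varepsilon(t'-s)^{\beta-1-\varepsilon}$, and the second with \eqref{eq:Sch-est w/o t for t-t'}. The remaining $s$-integrals are again Beta integrals, and they converge for $\varepsilon$ small.
\item The difference $t^\beta\Phi_t-(t')^\beta\Phi_{t'}$ applied to $f(0)$. Handle it by $(t^\beta-(t')^\beta)\Phi_t$ together with \eqref{eq:Sch-est w/o t for t-t'} for $\Phi$.
\end{enumerate}

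Continuity then follows. On $(0,T]$, \eqref{eq:sch est for fractional heat eq for t - t'} gives local Hölder continuity in time. Under the positivity conditions, \eqref{eq:sch est for fractional heat eq} gives $\|\partial^m_xX(t)\|_{\CC^{\widetilde\gamma}}\to0$ as $t\to0$, with $X(0)=0$, so continuity extends to $[0,T]$.

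The main obstacle is the kernel identification, in particular verifying Assumption~\ref{assump:function-phi} for $-E_\beta'$ and for $\phi_1$, notably the monotonicity of $-\phi'$. This relies on complete monotonicity of $E_\beta(-x)$, which makes $E_\beta(-x)=\int_0^\infty e^{-xr}\,\mu(\d r)$ for a positive measure $\mu$, and then on checking that $(1-E_\beta(-x))/x=\int_0^1 E'_\beta$-type averages inherit complete monotonicity. A secondary technical point is the bookkeeping of exponents in the increment estimate near $s=t'$, where $\varepsilon$ must be small.
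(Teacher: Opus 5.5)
Your argument is the paper's proof, step by step. The pieces are the same:
\begin{itemize}
\item $\RL^{1-\beta}_{t-,s}P_{\beta,\alpha}(t-s)$ is identified as $(t-s)^{\beta-1}$ times a kernel with $\phi\propto E_\beta'(-\cdot)$ and $\ell_\star=2$;
\item the $s=t$ boundary term vanishes, and the $s=0$ term uses the kernel $\phi_1(x)=(1-E_\beta(-x))/x$ with $\ell_\star=1$;
\item Lemma~\ref{Sch-est w/o t} is combined with Beta integrals;
\item the increment is split into the same three pieces.
\end{itemize}
Your complete-monotonicity argument, writing $\phi_1(x)=\int_0^1E_\beta'(-xu)\,\d u$, justifies Assumption~\ref{assump:function-phi}, which the paper only asserts.

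The one step that does not go through is in (a), and likewise for the factor $t^\beta-(t')^\beta$ in (c). There you convert to $(t')^{e}$, with $e=\beta-\frac{\beta}{\alpha}(\widetilde\gamma-\gamma+m+\kappa)-\lambda-\varepsilon$, using $t-t'\le t$. That inequality produces $t^{e}$, not $(t')^{e}$.

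No argument can produce $(t')^{e}$ when $e>0$, because the increment bound as stated is then false. Take $m=0$, $\lambda=0$, $\gamma=\gamma_{\ini}=0$, $\widetilde\gamma=\tfrac12$ and $f(s,x)=s$.
\begin{itemize}
\item By Lemma~\ref{lem:frac_D_of_ML} and $E_\beta'(0)=1/\Gamma(1+\beta)$, the kernel $\RL^{1-\beta}_{t-,s}P_{\beta,\alpha}(t-s)$ has total integral $(t-s)^{\beta-1}/\Gamma(\beta)$.
\item Both boundary terms vanish, since $f(0)=0$.
\item Hence $X(t)=\frac{1}{\Gamma(\beta)}\int_0^t(t-s)^{\beta-1}s\,\d s=t^{1+\beta}/\Gamma(2+\beta)$.
\end{itemize}
Letting $t'\to0$, the left-hand side tends to $t^{1+\beta}/\Gamma(2+\beta)$. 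The right-hand side, $C(t-t')^\varepsilon(t')^{e}\,t$, tends to $0$, since $e>0$ for small $\kappa,\varepsilon$. The paper's own proof in fact ends with $t^{e}$ rather than $(t')^{e}$ for the $X_1$-increment.

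What is true, and all the continuity claim needs, is the bound with $\max\bigl(t^{e},(t')^{e}\bigr)$ in place of $(t')^{e}$. To get it, you need the split $t\ge2t'$ versus $t<2t'$ in the $[t',t]$ piece regardless of the sign of $e$:
\begin{itemize}
\item For $t\ge 2t'$, bound that piece by the integral over all of $[0,t]$. This gives $t^{e+\varepsilon}\|f\|_{\CC^{\gamma,\lambda}_t}\lesssim(t-t')^\varepsilon t^{e}\|f\|_{\CC^{\gamma,\lambda}_t}$. Your bound $(t')^{-\lambda}(t-t')^{\beta-\frac{\beta}{\alpha}(\widetilde\gamma-\gamma+m+\kappa)}$ cannot be used here, because it blows up as $t'\to0$ when $\lambda>0$.
\item For $t<2t'$, your estimate does give $(t-t')^\varepsilon(t')^{e}$.
\end{itemize}
The same split handles $t^\beta-(t')^\beta$ in (c).
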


\begin{proof}
We need to show that $P_{\beta, \alpha}$ satisfies the assumption of Section~\ref{sec:schauder}. From \eqref{eq:heat-kernel} we have the Fourier transform \eqref{eq:P-Fourier} with the function $\phi(x) = \frac{1}{\sqrt{2\pi}} E_{\beta}(- x)$. All the properties from Assumption~\ref{assump:function-phi} follow readily from the properties of the Mittag--Leffler function $E_{\beta}$. In particular \eqref{lem:ML-bound} yields \eqref{eq:assumption-on-phi} with $\ell_\star=1$.

Recalling our definition of the function \eqref{eq:heat-solution}, we need to bound the three terms on the right-hand side. Let us denote them by $X_1(t)$, $X_2(t)$ and $X_3(t)$ respectively, so that $X(t) = X_1(t) + X_2(t) - X_3(t)$.

Using the Fourier series, we write $X_1$ as
\[
X_1(t,x) = \frac{1}{ \sqrt{2 \pi} } \int_0^t \sum_{k \in \Z} \widehat{\RL_{t-, s}^{1-\beta} P_{\beta, \alpha}(t-s)}(k) \, \widehat{f}(s,k) e^{ikx}\, \d s.
\]
Since the Fourier transform commutes with the fractional derivative, we use \eqref{eq:heat-kernel} to write it further as 
\[
\frac{1}{2 \pi} \int_0^t \sum_{k \in \Z} \RL_{t-, s}^{1-\beta} E_{\beta}(- (t-s)^\beta |k|^\alpha) \, \widehat{f}(s,k) e^{ikx}\, \d s.
\]
Using Lemma~\ref{lem:frac_D_of_ML} we compute the fractional derivative and get
\begin{align}
&\frac{\beta}{2 \pi} \int_0^t \sum_{k \in \Z} (t-s)^{\beta - 1} E'_{\beta}(-(t-s)^\beta |k|^\alpha) \, \widehat{f}(s,k) e^{ikx}\, \d s \notag \\
&\qquad = \frac{\beta}{2 \pi} \int_0^t (t-s)^{\beta - 1} \left(P_{1}(t-s) \ast f(s) \right)(x)\, \d s \label{eq: expression of X_1},
\end{align}
with a kernel $P_1$ defined by its Fourier transform as 
\[
\widehat{P_1}(s,k) = \frac{\beta}{\sqrt{2\pi}} E'_{\beta}(-s^\beta |k|^\alpha).
\]
We note that Lemma~\ref{lem:ML-bound} implies that this kernel has the properties \eqref{eq:P-Fourier} and \eqref{eq:assumption-on-phi} with $\ell_\star=2$. Applying then Lemma \ref{Sch-est w/o t} with the kernel $P_1$ in place of $P$ (we can interchange $\partial_x^m$ and $\int_0^t$ because $\partial_x^m (P_1 \ast f)(x)$ is continuous) we get
    \begin{align*}
        \| \partial_x^m X_1(t) \|_{\CC^{\widetilde{\gamma}}} 
        &\lesssim \int_0^t (t-s)^{\beta-1 - \frac{\beta}{\alpha}(\widetilde{\gamma} - \gamma + m + \kappa)} \| f(s, \cdot) \|_{\CC^\gamma} \, \d s \\
          &\lesssim \int_0^t (t-s)^{\beta-1 - \frac{\beta}{\alpha}(\widetilde{\gamma} - \gamma + m + \kappa)} s^{- \lambda} \|f\|_{ \CC^{\gamma, \lambda}_t} \, \d s = C t^{ \beta - \frac{\beta}{\alpha}(\widetilde{\gamma} - \gamma + m + \kappa) - \lambda } \|f\|_{ \CC^{\gamma, \lambda}_t}, 
    \end{align*}
    where the integral converges if $\widetilde{\gamma} - \gamma < \alpha - m - \kappa$ and $\lambda < 1 $ and the constant $C$ is given in terms of the Beta function by 
    \begin{equation*}
        C = B\left( \beta - \frac{\beta}{\alpha}(\widetilde{\gamma} - \gamma + m + \kappa) , 1-\lambda  \right). 
    \end{equation*}
    Now, we are going to bound the function $X_2$. We have 
    \begin{align*}
    X_2(t,x) &= \lim_{s \nearrow t} \bigl(I_{t-, s}^{\beta} P_{\beta, \alpha}(t-s) \ast f(s) \bigr)(x) = \lim_{s \nearrow t} \frac{1}{ \sqrt{2 \pi} } \sum_{k \in \Z} \widehat{I_{t-, s}^{\beta} P_{\beta, \alpha}(t-s)}(k) \, \widehat{f}(s,k) e^{ikx} \\
    &= \lim_{s \nearrow t} \frac{1}{2 \pi} \sum_{k \in \Z} I_{t-,s}^{\beta} E_{\beta}( - (t-s)^\beta |k|^\alpha) \, \widehat{f}(s,k) e^{ikx},
    \end{align*}
    where we used \eqref{eq:heat-kernel} again. Swapping the limit and the sum (because the summation converges uniformly near $t$) and using Lemma~\ref{lem:LM-integral}, which says that the fractional integral of the Mittag--Leffler function vanishes in the limit, we conclude that $X_2(t,x) \equiv 0$. 

    Finally, we will analyze $X_3$. Proceeding as before, we get
    \begin{align*}
    X_3(t,x) &= \lim_{s \searrow 0} \bigl(I_{t-, s}^{\beta} P_{\beta, \alpha}(t-s) \ast f(s) \bigr)(x) = \lim_{s \searrow 0} \frac{1}{ \sqrt{2 \pi} } \sum_{k \in \Z} \widehat{I_{t-, s}^{\beta} P_{\beta, \alpha}(t-s)}(k) \, \widehat{f}(s,k) e^{ikx} \\
        &= \lim_{s \searrow 0} \frac{1}{2 \pi} \sum_{k \in \Z} I_{t-,s}^{\beta} E_{\beta}( - (t-s)^\beta |k|^\alpha) \, \widehat{f}(s,k) e^{ikx}.
    \end{align*}
    Using Lemma~\ref{lem:LM-integral} we write it furthermore as 
    \begin{align*}
    \lim_{s \searrow 0} \frac{1}{2 \pi} \sum_{k \in \Z \setminus \{0\}} \frac{1}{|k|^\alpha} \bigl( E_{\beta}( - (t-s)^\beta |k|^\alpha) -1 \bigr) \widehat{f}(s,k) e^{ikx} - \lim_{s \searrow 0} \frac{1}{2 \pi} \frac{(t-s)^\beta}{\Gamma(1+\beta)}\widehat{f}(s,0).
    \end{align*}
    Swapping the sum and the limit, we get
    \begin{align}
        X_3(t,x) &= \frac{1}{2 \pi} \sum_{k \in \Z \setminus \{0\}} \frac{1}{|k|^\alpha} \bigl( E_{\beta}( - t^\beta |k|^\alpha) -1 \bigr) \widehat{f}(0,k) e^{ikx} - \frac{1}{2 \pi} \frac{t^\beta}{\Gamma(1+\beta)}\widehat{f}(0,0) \notag \\
        &= t^\beta \left(P_{3}(t) \ast f(0) \right)(x), \label{eq: expression of X_3}
    \end{align}
    where the kernel $P_{3}$ is defined via the Fourier transform as 
    \[
    \widehat{P_3}(t,k) = \frac{1}{\sqrt{2\pi}} \frac{1}{ t^{\beta} |k|^\alpha } \bigl( E_{\beta}( - t^\beta |k|^\alpha) -1 \bigr)
    \]
    for $k \neq 0$ and $\widehat{P_3}(t,0) = - \frac{1}{\sqrt{2\pi} \Gamma(1+\beta)}$. In order to apply Lemma~\ref{Sch-est w/o t} to this kernel, we need to show that it has the required properties \eqref{eq:P-Fourier} and \eqref{eq:assumption-on-phi}. For this, we write 
    \begin{equation*}
    \widehat{P_3}(t, k) = - \phi(t^\beta |k|^\alpha) 
    \end{equation*}
    with the function 
    \begin{equation*} 
    \phi(x) = 
    \begin{cases}
    \frac{1}{\sqrt{2\pi}} \frac{1}{x} \bigl(1- E_{\beta}( - x) \bigr) & \text{for} ~ x \neq 0, \\
    \frac{1}{\sqrt{2\pi} \Gamma(1+\beta)} & \text{for} ~ x = 0.
    \end{cases}
    \end{equation*}
    As follows from the properties of the Mittag--Leffler functions, this function is smooth on $\R_+$, positive, and strictly decreasing (we can see it from differentiating the function twice). The bound \eqref{eq:assumption-on-phi} with $\ell_\star = 1$ follows from \eqref{eq:ML-bound}. Hence, Lemma~\ref{Sch-est w/o t} yields 
    \[
    \| \partial_x^m X_3(t) \|_{\CC^{\widetilde{\gamma}} } \lesssim t^{ \beta - \frac{\beta}{\alpha} ( \widetilde{\gamma} - \gamma_{\ini} + m +\kappa ) } \left\| f(0) \right\|_{\CC^{\gamma_{\ini}}}.
    \]
    
    Combining the proved bounds on the processes $X_1$, $X_2$ and $X_3$, we get the desired bound \eqref{eq:sch est for fractional heat eq}.
    
    Next, we will prove the time dependence of the Schauder-type estimate. For this, we will bound the time increments of $X_1$ and $X_3$ (recall that $X_2(t)$ vanishes). From \eqref{eq: expression of X_1}, we get 
    \begin{align}\nonumber
        \partial_x^m \left( X_1(t) - X_1(t') \right)
        &= \frac{\beta}{2 \pi} \partial_x^m \int_0^{t'} \big[ (t-s)^{\beta - 1} - (t'-s)^{\beta - 1} \big] \left( P_{1}(t-s) \ast f(s) \right)(x)\, \d s \nonumber \\
          &\quad + \frac{\beta}{2 \pi} \partial_x^m \int_0^{t'} (t'-s)^{\beta - 1} \left( \big[ P_{1}(t-s) - P_{1}(t'-s) \big] \ast f(s) \right)(x)\, \d s \nonumber \\
        &\quad + \frac{\beta}{2 \pi} \partial_x^m \int_{t'}^{t} (t-s)^{\beta - 1} \left( P_{1}(t-s) \ast f(s) \right)(x)\, \d s. \label{eq: X_1(t) - X_1(t')}
    \end{align}
    For $t' < t$, we have 
    $\big| (t'-s)^{\beta - 1} - (t-s)^{\beta - 1} \big| 
      \lesssim (t-t')^{ \varepsilon} (t'-s)^{ \beta-1 - \varepsilon}$
    for any $\varepsilon \in [0,1-\beta]$. Then by \eqref{eq:Sch-est w/o t}, the $\CC^{\widetilde{\gamma}}$-norm of the first term in \eqref{eq: X_1(t) - X_1(t')} can be bounded by a constant multiple of
    \begin{align}
         \int_0^{t'} (t-t')^{ \varepsilon} &(t'-s)^{ \beta-1 - \varepsilon} (t-s)^{ - \frac{\beta}{\alpha}(\widetilde{\gamma} - \gamma + m + \kappa)} s^{- \lambda} \|f\|_{ \CC^{\gamma, \lambda}_t} \, \d s \notag \\
        &\qquad \lesssim (t-t')^{ \varepsilon} (t')^{ \beta - \frac{\beta}{\alpha}(\widetilde{\gamma} - \gamma + m + \kappa) - \lambda - \varepsilon} \|f\|_{ \CC^{\gamma, \lambda}_t}, \label{eq: bound of X_1(t) - X_1(t')-1} 
    \end{align}
    where $C = B\left( \beta - \frac{\beta}{\alpha}(\widetilde{\gamma} - \gamma + m + \kappa) - \varepsilon, 1-\lambda  \right)$. Using \eqref{eq:Sch-est w/o t for t-t'}, we can bound the $\CC^{ \widetilde{\gamma} }$ of the second term in \eqref{eq: X_1(t) - X_1(t')} by a constant times 
    \begin{align}
        \int_0^{t'} (t'-s)^{\beta - 1}  &(t-t')^{\varepsilon} (t'-s)^{ - \frac{\beta}{\alpha} ( \widetilde{\gamma} - \gamma +m +\kappa ) - \varepsilon } s^{- \lambda} \|f\|_{ \CC^{\gamma, \lambda}_t} \, \d s \notag \\
          &\qquad = C (t-t')^{\varepsilon} {t'}^{ \beta - \frac{\beta}{\alpha}(\widetilde{\gamma} - \gamma + m + \kappa) - \lambda - \varepsilon } \|f\|_{ \CC^{\gamma, \lambda}_t }, \label{eq: bound of X_1(t) - X_1(t')-2}
    \end{align}
    where $C = B\left( \beta - \frac{\beta}{\alpha}(\widetilde{\gamma} - \gamma + m + \kappa) - \varepsilon , 1-\lambda  \right)$. We can bound the $\CC^{ \widetilde{\gamma} }$-norm of the third term in \eqref{eq: X_1(t) - X_1(t')} by a constant times 
    \begin{align}
        \int_{t'}^{t} (t-s)^{ \beta - 1 - \frac{\beta}{\alpha}(\widetilde{\gamma} - \gamma + m + \kappa) } s^{- \lambda} \|f\|_{ \CC^{\gamma, \lambda}_t } \, \d s
         \leq (t-t')^{\varepsilon} t^{ \beta - \frac{\beta}{\alpha}(\widetilde{\gamma} - \gamma + m + \kappa) - \lambda - \varepsilon } \|f\|_{ \CC^{\gamma, \lambda}_t }, \label{eq: bound of X_1(t) - X_1(t')-3} 
    \end{align}
    where $C = B\left( \beta - \frac{\beta}{\alpha}(\widetilde{\gamma} - \gamma + m + \kappa) - \varepsilon , 1-\lambda  \right)$ and $\varepsilon \geq 0$. Therefore combining \eqref{eq: bound of X_1(t) - X_1(t')-1}, \eqref{eq: bound of X_1(t) - X_1(t')-2}, and \eqref{eq: bound of X_1(t) - X_1(t')-3} together, we then get 
    \begin{equation*}
        \| \partial_x^m \left( X_1(t) - X_1(t') \right) \|_{\CC^{ \widetilde{\gamma} }} \lesssim (t-t')^{\varepsilon} t^{ \beta - \frac{\beta}{\alpha}(\widetilde{\gamma} - \gamma + m + \kappa) - \lambda - \varepsilon } \|f\|_{ \CC^{\gamma, \lambda}_t }
    \end{equation*}
    for any $\varepsilon \in [0, 1-\beta]$ (recall that $\beta \in (\frac{1}{2}, 1]$). 
    
    We use \eqref{eq: expression of X_3} to write the time increment of $X_3$ as 
    \begin{align*} 
        & \partial_x^m \left( X_3(t) - X_3(t') \right) \notag \\
          &\qquad = (t^\beta - {t'}^\beta)\partial_x^m   \left(P_{3}(t) \ast f(0) \right) + {t'}^\beta \partial_x^m \left( (P_{3}(t) - P_{3}(t')) \ast f(0) \right). 
    \end{align*}
    Using the similar argument as above, we bound the $\CC^{ \widetilde{\gamma} }$-norms of these two terms by a constant multiple of $(t-t')^{\varepsilon} (t')^{ \beta - \frac{\beta}{\alpha} ( \widetilde{\gamma} - \gamma_{\ini} +m +\kappa ) - \tilde \varepsilon } \left\| f(0) \right\|_{\CC^{\gamma_{\ini}}}$ for any $\varepsilon \in [0, 1 - \beta]$.

Combining \eqref{eq:sch est for fractional heat eq} and \eqref{eq:sch est for fractional heat eq for t - t'}, we get the last statement of this proposition.
\end{proof}

\section{Definition of solutions and well-posedness} \label{sec: definition of sol and main proof}

In this section, we define a mild solution of equation \eqref{eq:FSBL} and prove its existence and uniqueness. 

Recall that the stationary solution $\psi$ is not sufficiently smooth to allow the nonlinear term $G(u) \partial_x u$ in \eqref{eq:FSBL} to be defined classically. Indeed, the solution $u$, provided that it exists in a reasonable sense, is expected to have a regularity no better than that of $\psi$, and $G(\psi) \partial_x \psi$ is not defined. 

Therefore, in order to define the solution $u$, we use the Da Prato--Debussche ansatz \cite{MR2016604} and write 
\begin{equation}\label{eq:DPD-ansatz}
u = \psi + \tilde v,
\end{equation}
where $\tilde v$ has a higher regularity than $\psi$. The idea of the method, which was first used in \cite{MR2016604} and then was applied to a stochastic Burgers-type equation in \cite{HairerRoughSPDEs}, is to formally define an equation for $\tilde v$ and show that it can be solved rigorously. Then the solution $u$ to the equation \eqref{eq:FSBL} is defined as \eqref{eq:DPD-ansatz}.

Subtracting the two equations \eqref{eq:FSBL} and \eqref{eq:FSH}, we formally get 
\begin{equation}\label{eq:equation-for-tilde-v}
    \partial_t^{\beta} \tilde v = - (-\Delta)^{\alpha/2} \tilde v + F(\psi + \tilde v) + G(\psi + \tilde v) \partial_x (\psi + \tilde v) - (-\Delta)^{\delta/2} I_t^{1-\beta} \bar \Pi_0 \xi
\end{equation}
with the initial condition $\tilde v_0 = u_0 - \psi_0$, 
where the operator $\bar \Pi_0 = \Id - \Pi_0$ leaves only the zero-Fourier mode of the function/distribution it is applied to. We note that the last term is independent of the spatial variable and hence it vanishes if $\delta > 0$. In the case $\delta = 0$, if we apply Duhamel's principle to this equation, the last term will be convolved in space and time with a fractional heat kernel. Since the heat kernel integrates to $1$, we will get
\[
- \int_0^t (\bar \Pi_0 \xi)(s)\, \d s,
\]
which is an $n$-dimension standard Brownian motion. Indeed, it is a Gaussian process with independent increments, and we can use It{\^o} isometry to compute its covariance 
\begin{align*}
    &\E \left[ \int_0^t (\bar \Pi_0 \xi)(s)\, \d s \otimes \int_0^t (\bar \Pi_0 \xi)(s)\, \d s \right] \\
      &\qquad\qquad\qquad = \frac{1}{2 \pi} \E \left[ \int_0^t \int_{\T} \xi(s,x)\, \d s\, \d x \otimes \int_0^t \int_{\T} \xi(s',x')\, \d s'\, \d x' \right]  \\
    &\qquad\qquad\qquad = \frac{1}{2 \pi} \int_0^t \int_{\T} \int_0^t \int_{\T} \E [ \xi(s,x) \otimes \xi(s,x) ] \, \d s\, \d x\, \d s'\, \d x' = t \, \Id. 
\end{align*}
So, it is an $n$-dimensional Brownian motion. 

Hence, it will be convenient to subtract this term from equation \eqref{eq:equation-for-tilde-v} by setting
\[
v(t,x) = \tilde v(t,x) - B_t, \qquad B_t = - \1{\delta = 0} \int_0^t (\bar \Pi_0 \xi)(s)\, \d s,
\]
which satisfies 
\begin{equation}\label{eq:equation-for-v}
    \partial_t^{\beta} v = - (-\Delta)^{\alpha/2} v + F(\psi + v + B) + G(\psi + v + B) \partial_x (\psi + v + B) \xi
\end{equation}
with the initial condition $v_0 = u_0 - \psi_0$. 
As we will see below, this equation can be solved if we make sense of the product $G(\psi + v) \partial_x \psi$, because the solution will be $v \in \CC^1$ and the product $G(\psi + v) \partial_x v$ is defined in the classical sense. We use the lift $\Psi = (\psi, \uppsi)$ of $\psi$ to a $\gamma$-rough path in $\CD^{\gamma} ( [-\pi,\pi], \R^n)$, defined in Lemma~\ref{lem:lift}, and the rough path integral to define the function 
\begin{equation}\label{def: CZ(t,x)}
    \CZ_v(t,x) \coloneqq \int_{-\pi}^x G(\psi + v + B)(t,z)\, \d_z \Psi(t,z),
\end{equation}
where the integration is with respect to the variable $z$. Having the expression on the right-hand side defined, we need to have $\psi_t \in \CC^\gamma$ with $\frac{1}{3} < \gamma \leq \frac{1}{2}$, $v_t \in \CC^1$ and $(G (\psi_t + v_t + B_t), DG(\psi_t + v_t + B_t))$ to be a rough path controlled by $\Psi_t$ (see Proposition~\ref{prop:controlled-function} and the comment above it). Then we postulate 
\begin{align*}
        \bigl(G(\psi + v + B) \partial_x \psi\bigr)(t,x) \coloneqq \partial_x \CZ_v(t,x),  
\end{align*}
where the derivative is in the sense of distributions. We note that if we convolve this distribution with a heat kernel, we can use integration by parts to move the spatial derivative onto the kernel. This motivates our definition of a solution. 

\begin{defn}\label{def: sol of FSBL}
    A continuous stochastic process $u$ is a mild solution to \eqref{eq:FSBL} on the time interval $[0, T]$ if for any $t \in (0, T]$ it can be written as $u_t = \psi_t + v_t + B_t$, where the process $v$ satisfies  
    \begin{equation}\label{contraction_M}
    \begin{aligned}
        v(t, x) &= \partial_x \int_0^t \int_{-\pi}^\pi P_{\beta, \alpha}(t-s, x-y) \; \partial_s^{1-\beta} \CZ_v(s,y) \, \d y\, \d s \\
          &\qquad+ \int_0^t \int_{-\pi}^\pi P_{\beta, \alpha}(t-s, x-y) \partial_s^{1-\beta} \big[ G ( \psi + v + B) \partial_y v \big] (s,y) \, \d y\, \d s \\
        &\qquad+ \int_0^t \int_{-\pi}^\pi P_{\beta, \alpha}(t-s, x-y) \partial_s^{1-\beta} F (\psi(s,y) + v(s,y) + B_s) \, \d y\, \d s \\
          &\qquad+ \int_{-\pi}^\pi P_{\beta, \alpha}(t, x-y) ( u_0(y) - \psi_0(y)) \, \d y
    \end{aligned}
    \end{equation}
    almost surely. 
\end{defn}

\begin{rem} 
    The forcing terms considered in \eqref{contraction_M} are not, in general, $\CC^1$ in time, and hence the Caputo derivative cannot be applied to them directly in the classical sense. We therefore define all three integrals above via the integration-by-parts formula \eqref{eq:heat-solution}. See the discussion surrounding \eqref{eq:heat-solution} for further details.
\end{rem}

With the notion of a solution in place, we now derive several auxiliary results which will be used to prove Theorem~\ref{thm:main}, the main result of this article. 

For a sufficiently regular function $v$, let $\CM(v)$ denote the expression on the right-hand side of \eqref{contraction_M}. Our goal is to apply the Banach fixed-point theorem to the map $\CM$ on a suitable space, and for this we need to show that $\CM$ is a contraction map. Because we consider two types of initial conditions in Assumption~\ref{assump:initial}, we will study the map $\CM$ on different Banach spaces.

To specify the spaces, we fix two values 
\begin{equation} \label{def: gamma_*}
     \gamma_* \in (\tfrac{1}{3}, \gamma_0), \qquad \kappa_0 \in (0, \alpha + \gamma_* - 2),
\end{equation}
where the former will be used as the spatial regularity of $\psi$. We fix furthermore a value 
\[
\widetilde{\gamma} \in (1, \gamma_* + \alpha -1),
\]
where the interval is non-empty due to Assumption~\ref{assump:parameters}. Then we define, for any $T > 0$, the Banach spaces 
\begin{align*}
    \CB_T 
    &\coloneqq \left\{ v \in \CC ( [0,T] ; \CC^{1}(\T) ) \cap \CC ( (0,T] ; \CC^{\widetilde{\gamma}}(\T) ) : \| v \|_{\CB_T} < \infty \right\}
\end{align*}
and 
\begin{align*}
      \widetilde \CB_T 
      &\coloneqq \left\{ v \in \CC ( [0,T] ; \CC^{\gamma}(\T) ) \cap \CC ( (0,T] ; \CC^{\widetilde{\gamma}}(\T) ) : \| v \|_{\widetilde \CB_T} < \infty \right\}
\end{align*}
with the respective norms
\begin{align}
    \| v \|_{\CB_T} 
    &\coloneqq \| v \|_{\CC^1_T} + \| v \|_{\CC^{ \widetilde{\gamma}, \frac{\beta}{\alpha} ( \widetilde{\gamma} - 1 + \kappa_0 )}_T} \label{def: CB_T norm}
\end{align}
and 
\begin{align}
      \| v \|_{\widetilde \CB_T} 
      &\coloneqq \| v \|_{\CC^{\gamma_*}_T} + \sum_{ \eta = 2 \gamma_*, 1, \widetilde{\gamma} }  \| v \|_{\CC^{\eta, \frac{\beta}{\alpha} ( \eta - \gamma_* + \kappa_0 )}_T}, \label{def: tilde CB_T norm}
\end{align}
where we use the weighted norms \eqref{def: C gamma t-lambda}. Basically, it means that the higher the regularity we look at, the faster the blow-up rate $v_t$ has as $t \to 0$. The spaces $\CB_T$ and $\widetilde \CB_T$ will be used to prove a Banach space fixed-point argument under the Assumption~\ref{assump:initial}\eqref{assump: initial 1st kind} and \eqref{assump: initial 2nd kind}, respectively. 

In order to perform the Picard iterations for the map $\CM$, we need to have a priori bounds on the involved processes. For this, we use a constant $K > 0$ and introduce the following stopping time:
\begin{align}\label{stop time sigma_K}
    \sigma_K 
    &\coloneqq \inf \{ t \geq 0 : \vvvert \Psi \vvvert_{\gamma_*, t} \geq K \} \wedge \inf \{ t \geq 0 : |B_t| \geq K \}.
\end{align}
Then, for any time $t \geq 0$, we define its restriction 
\begin{equation}
    t_K \coloneqq t \wedge \sigma_K \label{stop time t_K}
\end{equation}
which is random. The advantage of using this time is that we have the bounds 
\begin{equation}\label{eq:noise-bounded}
  \vvvert \Psi \vvvert_{\gamma_*, t} \leq K, \qquad |B_t| \leq K
\end{equation}
on the interval $t \in [0, T_K]$. In particular, for $\| v\|_{\CB_{T_K}} \leq L$ or $\| v\|_{\widetilde\CB_{T_K}} \leq L$, we will often use the bounds 
\begin{equation}\label{eq:bound-on-G}
\| G^{(m)}(\psi_t + v_t + B_t) \|_{L^\infty} \leq \sup_{|z| \leq 2K + L} |G^{(m)}(z)|,
\end{equation}
for $0 \leq m \leq 2$ and $t \in [0, T_K]$, on the function appearing in \eqref{eq:equation-for-v}.

We are going to prove that if the initial condition satisfies Assumption~\ref{assump:initial}\eqref{assump: initial 1st kind}, then $\CM$ is a contraction map on a ball in $\CB_{T_K}$ with a suitable $T > 0$; namely, we will prove 
\begin{equation*}
    \left\| \CM(v) \right\|_{\CB_{T_K}} \leq L,
    \qquad \qquad \| \CM(v) - \CM(\overline{v}) \|_{\CB_{T_K}} \leq \frac{1}{2} \| v - \overline{v} \|_{\CB_{T_K}}
\end{equation*}
provided $\| v \|_{\CB_{T_K}} \leq L$ and $\| \overline{v} \|_{\CB_{T_K}} \leq L$ for some $L > 0$ and some $T > 0$ depending on $K$ and $L$. We will also establish analogous estimates in the space $\widetilde{\CB}_{T_K}$ in the case when the initial condition satisfies Assumption~\ref{assump:initial}\eqref{assump: initial 2nd kind}. Then, applying the Banach fixed point theorem, we conclude that $\CM$ has a unique fixed point on the time interval $[0, T_K]$ in both $\CB_{T_K}$ and $\widetilde{\CB}_{T_K}$, given the respective initial conditions.

Let us write 
\begin{equation}\label{def: M_i}
    \CM(v) = \sum_{i=1}^4 \CM_i(v),
\end{equation}
where $\CM_i(v)$ is the $i$-th term on the right-hand side of \eqref{contraction_M}. In the following sections, we will bound these terms in the space $\CB_{T_K}$ one by one. To bound the map $\CM$ in the $\widetilde\CB_{T_K}$ norm, we will rewrite it in the form \eqref{def: M_i-tilde} by combining $\CM_1$ and $\CM_2$ together. A precise explanation for this can be found around~\eqref{def: M_i-tilde}.

\subsection{Estimates on $\CZ$}
\label{sec:Z-bounds}

We have assumed that the functions $F$ and $G$ are sufficiently regular, but we have not assumed their derivatives to be uniformly bounded. Since they are locally bounded, we introduce the following local norms:
\begin{align}\label{def: M_F and M_G}
   M^{(F)}_x 
    \coloneqq \| F \|_{\CC^1 ( [-x, x]^n ) }, \qquad\qquad 
    M^{(G)}_x 
    \coloneqq \| G \|_{\CC^3 ( [-x, x]^n ) },
\end{align}
for any $x > 0$.

We start by computing the regularity of the process $\CZ_v$ defined in \eqref{def: CZ(t,x)}.

\begin{lemma} \label{Holder_norm_of_mathcal(Z)}
    For any $t \geq 0$ we have 
    \begin{equation}\label{eq:Z-bound-1}
        \| \CZ_v(t_K) \|_{\CC^{\gamma_*}} 
        \leq C K \bigl( 1 + K^2 + L \bigr) M^{(G)}_{2 K+L},
    \end{equation}
    provided $\| v \|_{\CB_{t_K}} \leq L$, and for any $t > 0$ we have 
    \begin{equation}\label{eq:Z-bound-2}
        \| \CZ_v (t_K) \|_{\CC^{\gamma_*}} 
        \leq C K \bigl( 1 + K^2 + ( 1+ t^{ - \frac{\beta}{\alpha} ( \gamma_* + \kappa_0 ) } ) L \bigr) M^{(G)}_{2 K+L}, 
    \end{equation}
    provided $\| v \|_{\widetilde \CB_{t_K}} \leq L$. Here, we use the values \eqref{def: gamma_*}. 
\end{lemma}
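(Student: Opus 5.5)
The plan is to fix $t$, write $s = t_K$, and treat $\CZ_v(s,\cdot)$ as the rough integral $x \mapsto \int_{-\pi}^x Y_z \,\d \Psi_s(z)$ with integrand $Y = G(\psi_s + v_s + B_s)$. I will view $v_s + B_s$ as the smooth perturbation $v$ in Lemma~\ref{Gubinelli deriv of G(v+x)}, with $X = \psi_s$ and $g = G$ applied column by column. Since $\CZ_v(s,-\pi) = 0$, the sup norm is controlled by $(2\pi)^{\gamma_*}$ times the Hölder seminorm, so it suffices to bound $\|\CZ_v(s)\|_{\gamma_*}$.

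The first step is to establish that $(Y, DG(\psi_s+v_s+B_s))$ is controlled by $\Psi_s$, with explicit norms.

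\begin{itemize}
\item Because $B_s$ is constant in space, $\|v_s + B_s\|_{\gamma} = \|v_s\|_{\gamma}$.
\item On the torus, $\|v_s\|_{2\gamma_*} \lesssim \|v_s\|_{\CC^1}$, since $2\gamma_* < 1$.
\item Only $G$ and its first two derivatives on the ball $\{|z| \le 2K+L\}$ enter, so $\|G\|_{\CC^2}$ can be replaced by $M^{(G)}_{2K+L}$ via \eqref{eq:bound-on-G}. This requires $\|\psi_s\|_{L^\infty} \le K$, which holds because $\psi$ has zero mean and $\|\psi_s\|_{\gamma_*} \le K$ up to a constant.
\end{itemize}

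Lemma~\ref{Gubinelli deriv of G(v+x)} then gives
\[
\|R^G\|_{2\gamma_*} + \|Y\|_{\gamma_*} + \|Y'\|_{\gamma_*} \lesssim M^{(G)}_{2K+L}\bigl(K + K^2 + \|v_s\|_{\gamma_*} + \|v_s\|_{2\gamma_*}\bigr).
\]

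The second step bounds the rough integral itself. Proposition~\ref{prop:controlled-function} is stated only for $G(X)$ with no $v$, so I will use the general sewing bound for controlled integrands (\cite[Theorem~4.10]{MR4174393}):
\[
\Bigl|\int_x^{x'} Y\,\d\Psi - Y_x\,\delta\psi_{x,x'} - Y'_x\,\uppsi_{x,x'}\Bigr| \lesssim \bigl(\|\psi\|_{\gamma_*}\|R^G\|_{2\gamma_*} + \|\uppsi\|_{2\gamma_*}\|Y'\|_{\gamma_*}\bigr)\,|x'-x|^{3\gamma_*}.
\]
On the right-hand side, $|Y_x\,\delta\psi| \le M K |x'-x|^{\gamma_*}$ and $|Y'_x\,\uppsi| \le M K^2 |x'-x|^{2\gamma_*}$. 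Each term therefore carries at least one factor $\vvvert\Psi\vvvert \le K$, and combining with the first step yields
\[
\|\CZ_v(s)\|_{\CC^{\gamma_*}} \lesssim K\,(1 + K^2 + \|v_s\|_{\gamma_*} + \|v_s\|_{2\gamma_*})\,M^{(G)}_{2K+L}.
\]
Here the $K^2$ terms absorb the $K$ terms, and the $K\cdot K^2$ products come from $\|\uppsi\|\,\|Y'\|$. I expect the main obstacle to be bookkeeping: tracking the powers of $K$ exactly in the form $K(1+K^2+\cdot)$, and in particular the cross terms such as $\|\uppsi\|\,\|Y'\|$, where $\|Y'\|_{\gamma_*}$ involves $DG$ and hence the third derivative through $R$. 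This is why $M^{(G)}$ is built from $\CC^3$. Those terms should be bounded crudely using $K \le 1 + K^2$.

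Finally, I will insert the norm of $v$. In $\CB_{t_K}$ both $\|v_s\|_{\gamma_*}$ and $\|v_s\|_{2\gamma_*}$ are at most $\|v\|_{\CC^1_{t_K}} \le L$, which gives \eqref{eq:Z-bound-1}. In $\widetilde\CB_{t_K}$, $\|v_s\|_{\gamma_*} \le L$ and, by the weighted norm \eqref{def: tilde CB_T norm} with $\eta = 2\gamma_*$,
\[
\|v_s\|_{2\gamma_*} \le s^{-\frac{\beta}{\alpha}(\gamma_*+\kappa_0)} L \le t^{-\frac{\beta}{\alpha}(\gamma_*+\kappa_0)} L.
\]
The last inequality fails when $s = \sigma_K < t$, so there I will either keep $t_K$ or note that the weight exponent makes the bound monotone and use $t_K \le t$ with the convention of the statement. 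This produces \eqref{eq:Z-bound-2}.
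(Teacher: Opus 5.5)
Your route is the same as the paper's: you use Lemma~\ref{Gubinelli deriv of G(v+x)} with $v_s+B_s$ as the regular perturbation, then the sewing bound of \cite[Theorem~4.10]{MR4174393}, the triangle inequality on increments, and $\CZ_v(s,-\pi)=0$ for the sup norm. You then insert the $\CB$/$\widetilde{\CB}$ weights with $t$ read as $t_K$. That reading is the paper's convention. Your other option, passing from $t_K$ to $t$, fails, since $t_K^{-\frac{\beta}{\alpha}(\gamma_*+\kappa_0)}\ge t^{-\frac{\beta}{\alpha}(\gamma_*+\kappa_0)}$.

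The step that does not work as you describe is the sewing term $\|\uppsi_s\|_{2\gamma_*}\|Y'\|_{\gamma_*}$. From $\vvvert\Psi\vvvert_{\gamma_*,t}\le K$ you only get $\|\uppsi_s\|_{2\gamma_*}\le K^2$. Also $\|Y'\|_{\gamma_*}=\|DG(\psi_s+v_s+B_s)\|_{\gamma_*}\le M^{(G)}_{2K+L}(K+\|v_s\|_{\gamma_*})$. So the best bound for this term is $M^{(G)}_{2K+L}(K^3+K^2\|v_s\|_{\gamma_*})$, which contains $K^2L$, not only $K\cdot K^2$. A $K^2L$ term is not dominated by $CK(1+K^2+L)$ uniformly: take $L=K^2\to\infty$. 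Using $K\le 1+K^2$ only turns it into $K(1+K^2)L$. So your intermediate bound $\|\CZ_v(s)\|_{\CC^{\gamma_*}}\lesssim K(1+K^2+\|v_s\|_{\gamma_*}+\|v_s\|_{2\gamma_*})M^{(G)}_{2K+L}$ does not follow from the estimates you list.

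The paper's proof has the same slip. It records this contribution as $KM^{(G)}_{2K+L}(K+L)(b-a)^{3\gamma_*}$, i.e.\ it bounds $\|\uppsi_t\|_{2\gamma_*}$ by $K$. Two lines earlier it correctly uses $K^2$ for $|\widetilde{DG}_v(t,a)||\uppsi_t(a,b)|$. What the argument actually gives is something like $\|\CZ_v(t_K)\|_{\CC^{\gamma_*}}\le CK(1+K)(1+K^2+L)M^{(G)}_{2K+L}$, and analogously with $L$ replaced by $(1+t_K^{-\frac{\beta}{\alpha}(\gamma_*+\kappa_0)})L$. This is harmless later: in the fixed-point argument the bound only enters a constant $C(K,L)$, which stays linear in $L$ when $F$ and $G$ have bounded derivatives.

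Two smaller points. First, $R^G$ and $\|Y'\|_{\gamma_*}$ involve only $D^2G$, so the $\CC^3$ assumption is not used in this lemma; it is needed for the difference estimates. Second, zero mean plus the H\"older bound gives $\|\psi_s\|_{L^\infty}\le(2\pi)^{\gamma_*}K$, not $K$. Strictly, the ball in \eqref{eq:bound-on-G} then has radius $(1+(2\pi)^{\gamma_*})K+L$; the paper simply takes \eqref{eq:bound-on-G} for granted.
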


\begin{proof}
    We will restrict all the time variables to the interval $[0, \tau_K]$ and write $t$ throughout the proof. Let us denote $\widetilde{G}_v(t,x) = G(\psi(t,x) + v(t,x) + B_t)$ and $\widetilde{DG}_v(t,x) = DG(\psi(t,x) + v(t,x) + B_t)$. From Lemma~\ref{Gubinelli deriv of G(v+x)}, we know that the process $( \widetilde{G}_v, \widetilde{DG}_v)(t,x)$ is controlled by the rough path $\Psi_t(x)$ as a process in the spatial variable $x$. By the rough path theory \cite[Theorem~4.10]{MR4174393}, for any $a, b \in [-\pi, \pi]$ we have
    \begin{align*}
        &\biggl| \int_a^b \widetilde{G}_v(t,x)\, \d_x \Psi_t(x) - \widetilde{G}_v(t,a) \big( \psi_t(b) - \psi_t(a) \big) - \widetilde{DG}_v(t, a) \uppsi_t(a, b) \biggr| \\
          &\qquad\qquad\qquad \lesssim \left( \| \psi_t \|_{\gamma_*} \| R^G_t \|_{2 \gamma_*} + \| \uppsi_t \|_{2 \gamma_*} \| DG_v(t) \|_{\gamma_*} \right) | b-a |^{3 \gamma_*},
    \end{align*}
    where 
    \begin{equation*}
        \| R^G_t \|_{2 \gamma_*} \coloneqq \sup_{a < b \in [-\pi , \pi]} \frac{ | \widetilde{G}_v(t,b) - \widetilde{G}_v(t,a) - \widetilde{DG}_v(t,a) ( \psi_t(b) - \psi_t(a)) |}{(b-a)^{2 \gamma_*}}.
    \end{equation*}
    Therefore, using the estimate above with  and the triangle inequality, we get
    \begin{align}
        | \CZ_v(t,b) - \CZ_v(t,a) | 
        &= \left| \int_a^b \widetilde{G}_v(t,x)\, \d_x \Psi_t(x) \right| \notag \\
          &\lesssim | \widetilde{G}_v(t,a) | | \psi_t(b) - \psi_t(a) | + | \widetilde{DG}_v(t,a)| | \uppsi_t(a, b) | \notag \\
        &\qquad + \left( \| \psi_t \|_{\gamma_*} \| R^G_t \|_{2 \gamma_*} + \| \uppsi_t \|_{2 \gamma_*} \| DG_v(t) \|_{\gamma_*} \right) | b-a |^{3 \gamma_*}. 
        \label{eq8}
    \end{align}
    Using \eqref{bdd: remainder R of G}, \eqref{eq:bound-on-G}, the function $M^{(G)}$, and the condition $\vvvert \Psi_t \vvvert_{\gamma_*} \leq K$, we bound the previous expression by
    \begin{align*}
        M^{(G)}_{2 K+L} \bigl( K (b-a)^{\gamma_*} + K^2 (b-a)^{2\gamma_*} \bigr) &+ \frac{1}{2} K M^{(G)}_{2 K+L} \left( K^2 + 2L \right) (b-a)^{3 \gamma_*} \\
        &\qquad + K M^{(G)}_{2 K+L} ( K + L ) (b-a)^{3 \gamma_*}.
    \end{align*}
Since $\CZ_v(t,-\pi) = 0$, the last estimate yields a bound on the supremum of $\CZ_v$, and we get
    \begin{align*}
        \left\| \CZ_v(t) \right\|_{\CC^{\gamma_*}} 
        &= \sup_{b \in [-\pi, \pi]} |\CZ_v(t,b)| + \sup_{a \neq b \, \in [-\pi , \pi] } \frac{ | \CZ_v(t,b) - \CZ_v(t,a) | }{ |b-a|^{\gamma_*} } \lesssim M^{(G)}_{2 K+L} K \bigl( 1 + K^2 + L \bigr). 
    \end{align*} 
    which yields the desired bound \eqref{eq:Z-bound-1}. 

    Now we will prove the second bound \eqref{eq:Z-bound-2}. Using the same estimates as above, we bound expression \eqref{eq8} by
    \begin{align*}
        2 M^{(G)}_{2 K+L} K \bigl((b-a)^{\gamma_*} + (b-a)^{2\gamma_*}\bigr) &+ \frac{1}{2} K M^{(G)}_{2 K+L} \left( K^2 + 2 t^{ - \frac{\beta}{\alpha} ( \gamma_* + \kappa_0 ) } L \right) (b-a)^{3 \gamma_*} \\
        &\qquad + K M^{(G)}_{2 K+L} ( K + L ) (b-a)^{3 \gamma_*}, 
    \end{align*}
    where $t^{ - \frac{\beta}{\alpha} ( \gamma_* + \kappa_0 ) }$ comes from the norm $\|v_t\|_{\CC^{2 \gamma_*}}$ in the definition of the norm $\|v\|_{\widetilde \CB_{t}}$ in \eqref{def: tilde CB_T norm}. Using again $\CZ_v(t,-\pi) = 0$, we get
    \begin{align*}
        \| \CZ_v(t) \|_{\CC^{\gamma_*}} &\lesssim M^{(G)}_{2 K+L} K \bigl( 1 + K^2 + ( 1+ t^{ - \frac{\beta}{\alpha} ( \gamma_* + \kappa_0 ) } ) L \bigr). 
    \end{align*}
    which is the desired bound \eqref{eq:Z-bound-2}. 
\end{proof}

For a function $f_v$, depending on $v$, we introduce the notation 
\begin{equation}
    \delta_{v, \bar v} f := f_v - f_{\bar v}, \label{def: delta v, v bar}
\end{equation}
for different $v$ and $\bar v$. Then we can study how $\CZ_v$ depends on $v$.

\begin{lemma} 
    For any $t \geq 0$ we have
    \begin{equation}\label{eq:delta-Z-bound-1}
        \| \delta_{v, \bar v} \CZ(t_K) \|_{\CC^{\gamma_*}} 
        \leq C K (1+ K^2 + L) M^{(G)}_{2K+L} \| v - \bar v \|_{\CB_{t_K}}, 
    \end{equation}
    provided $\| v \|_{\CB_{t_K}} \leq L$ and $\| \bar{v} \|_{\CB_{t_K}} \leq L$, and for any $t > 0$ we have
    \begin{equation}\label{eq:delta-Z-bound-2}
        \| \delta_{v, \bar v} \CZ (t_K) \|_{\CC^{\gamma_*}} 
        \leq C K \left( 1 + K^2 + ( 1+ t^{ - \frac{\beta}{\alpha} ( \gamma_* + \kappa_0 ) } ) L \right) M^{(G)}_{2K+L} \| v-\overline{v} \|_{ \widetilde \CB_{t_K} }, 
    \end{equation}
    provided $\| v \|_{ \widetilde \CB_{t_K}} \leq L$ and $\| \bar{v} \|_{ \widetilde \CB_{t_K}} \leq L$. Here, we use the values \eqref{def: gamma_*}.
\end{lemma}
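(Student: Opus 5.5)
The key observation is that the rough path integral is linear in the controlled integrand. Hence
\[
\delta_{v,\bar v}\CZ(t,x) = \int_{-\pi}^x \bigl(\widetilde G_v - \widetilde G_{\bar v}\bigr)(t,z)\,\d_z\Psi_t(z),
\]
where the difference $(\widetilde G_v - \widetilde G_{\bar v}, \widetilde{DG}_v - \widetilde{DG}_{\bar v})$ is itself a path controlled by $\Psi_t$. So I would repeat the proof of Lemma~\ref{Holder_norm_of_mathcal(Z)} verbatim, using the local expansion \cite[Theorem~4.10]{MR4174393} and inequality \eqref{eq8}, but with this difference in place of $(\widetilde G_v,\widetilde{DG}_v)$. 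Since $\delta_{v,\bar v}\CZ(t,-\pi)=0$, it suffices to bound the increments. Everything then reduces to estimating four quantities by a multiple of $\|v-\bar v\|$:
\begin{itemize}
\item[(a)] $\|\widetilde G_v - \widetilde G_{\bar v}\|_{L^\infty}$,
\item[(b)] $\|\widetilde{DG}_v - \widetilde{DG}_{\bar v}\|_{L^\infty}$,
\item[(c)] $\|\widetilde{DG}_v - \widetilde{DG}_{\bar v}\|_{\gamma_*}$,
\item[(d)] the remainder $\|R^{G}_v - R^{G}_{\bar v}\|_{2\gamma_*}$.
\end{itemize}
Each appears multiplied by $K$ or $K^2$ from \eqref{eq:noise-bounded}.

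To estimate these quantities, I write $w=v-\bar v$ and use the integral form
\[
G(a+w)-G(a)=\int_0^1 DG(a+\theta w)\,w\,\d\theta, \qquad a=\psi+\bar v+B.
\]
This gives (a) $\lesssim M^{(G)}_{2K+L}\|w\|_{L^\infty}$ and likewise (b). For (c), I expand
\[
\bigl[DG(a+\theta w)\,w\bigr](y)-\bigl[DG(a+\theta w)\,w\bigr](x).
\]
Splitting off the increment of $w$ and the increment of $DG(\cdot)$, and using $D^2G$ bounded by $M^{(G)}_{2K+L}$ together with $\|\psi\|_{\gamma_*}\le K$, this yields a bound $\lesssim M^{(G)}(\|w\|_{\gamma_*}+(K+L)\|w\|_{L^\infty})$. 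In the $\CB$ case, $\|w\|_{\gamma_*}\lesssim\|w\|_{\CC^1}\le\|w\|_{\CB_{t_K}}$. In the $\widetilde\CB$ case, $\|w\|_{\gamma_*}\le\|w\|_{\widetilde\CB_{t_K}}$ directly.

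The main obstacle is (d), the difference of remainders. I would write $R^G_v(x,y)$ as a second-order Taylor remainder in the $\psi$ direction plus the $v$-increment term, exactly as in the proof of Lemma~\ref{Gubinelli deriv of G(v+x)}:
\[
R^G_v(x,y)=\int_0^1(1-s)\,D^2G(\cdots)[\delta\psi,\delta\psi]\,\d s+\int_0^1 DG(\cdots)\,\delta v_{x,y}\,\d s.
\]
Differencing in $v$ then produces three kinds of terms:
\begin{itemize}
\item[(i)] $D^3G$ evaluated at intermediate points, times $\|w\|_{L^\infty}|\delta\psi|^2$, bounded by $M^{(G)}K^2\|w\|_{L^\infty}|y-x|^{2\gamma_*}$; this is where the $\CC^3$ assumption on $G$ is used;
\item[(ii)] $D^2G$ times $\|w\|_{L^\infty}\,|\delta v|$, bounded by $M^{(G)}\|w\|\,L\,|y-x|^{2\gamma_*}$ in the $\CB$ case, since there $\|v\|_{2\gamma_*}\le\|v\|_{\CC^1}\le L$;
\item[(iii)] $DG$ times $|\delta w|$, bounded by $M^{(G)}\|w\|_{2\gamma_*}|y-x|^{2\gamma_*}$.
\end{itemize}
Collecting (a)--(d) and multiplying by the factors $K$, $K^2$ from $\|\psi_t\|_{\gamma_*}$ and $\|\uppsi_t\|_{2\gamma_*}$ gives \eqref{eq:delta-Z-bound-1}, with constant $CK(1+K^2+L)M^{(G)}_{2K+L}$.

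For \eqref{eq:delta-Z-bound-2}, the only change is that $2\gamma_*$-H\"older norms of $v$, $\bar v$ and $w$ are no longer bounded uniformly in time. The $\widetilde\CB$ norm \eqref{def: tilde CB_T norm} gives instead
\[
\|v_t\|_{2\gamma_*}\le t^{-\frac\beta\alpha(\gamma_*+\kappa_0)}L, \qquad \|w_t\|_{2\gamma_*}\le t^{-\frac\beta\alpha(\gamma_*+\kappa_0)}\|w\|_{\widetilde\CB_{t_K}}.
\]
Term (iii) and the $v$-dependent part of (ii) therefore pick up the factor $t^{-\frac\beta\alpha(\gamma_*+\kappa_0)}$. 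Term (iii) in fact yields $M^{(G)}t^{-\frac\beta\alpha(\gamma_*+\kappa_0)}\|w\|_{\widetilde\CB_{t_K}}$ without an $L$. I would absorb it into the stated constant $1+K^2+(1+t^{-\frac\beta\alpha(\gamma_*+\kappa_0)})L$ by assuming $L\ge1$, or else by keeping the extra term explicitly; this bookkeeping is the point I expect to need care. The remaining sup-norm terms are controlled by $\|w\|_{\CC^{\gamma_*}_{t}}\le\|w\|_{\widetilde\CB_{t_K}}$.
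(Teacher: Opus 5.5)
Your proposal is correct and follows the paper's proof essentially step for step. The shared ingredients are linearity of the rough integral in the controlled pair $(\delta_{v,\bar v}\widetilde G,\delta_{v,\bar v}\widetilde{DG})$, the local expansion \eqref{eq8} with $\delta_{v,\bar v}\CZ(t,-\pi)=0$, the mean value theorem for the sup-norm terms, the fundamental theorem of calculus for $\|\delta_{v,\bar v}\widetilde{DG}\|_{\CC^{\gamma_*}}$, and a Taylor expansion of the remainder; your splitting of $R^G_v$ into a $\psi$-Taylor part and a $v$-increment part produces the same three kinds of terms as the paper's derivation of \eqref{eq12}.

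Some bookkeeping remarks:
\begin{enumerate}
\item In (c) the integrand is $D^2G(a+\theta w)\,w$, not $DG(a+\theta w)\,w$. Its increment is therefore controlled through $D^3G$, which is still covered by $M^{(G)}_{2K+L}$.
\item Your worry about the $L$-free term $M^{(G)}_{2K+L}\,t^{-\frac{\beta}{\alpha}(\gamma_*+\kappa_0)}\|w\|_{\widetilde\CB_{t_K}}$ is justified: the paper's \eqref{eq12-new} silently drops it.
\item Similarly, multiplying (c) by $\|\uppsi_t\|_{2\gamma_*}\le K^2$ really yields a $K^2L$ term, which is not dominated by $K(1+K^2+L)$.
\end{enumerate}
Neither discrepancy matters downstream, because one may take $L\ge 1$ and the fixed-point argument only needs some finite constant depending on $K$ and $L$.
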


\begin{proof}
    As in the proof of the preceding lemma, we restrict the time variable $t$ to $[0, \tau_K]$. Moreover, it will be convenient not to write it to make the formulas below look lighter. Using the notation from that proof, the process $( \delta_{v, \bar v} \widetilde{G}, \delta_{v, \bar v} \widetilde{DG})$ is controlled by the rough path $\Psi$. Then \eqref{eq8} yields 
    \begin{align}
        \left| \delta_{v, \bar v} \CZ(b) - \delta_{v, \bar v} \CZ(a) \right| 
        &\leq | \delta_{v, \bar v} \widetilde{G}(a) | | \psi(b) - \psi(a) | + | \delta_{v, \bar v} \widetilde{DG} (a) | | \uppsi(a,b) | \label{eq10} \\
          &\quad + C \left( \| \psi \|_{\gamma_*} \| R^{\delta_{v, \bar v} \widetilde{G}} \|_{2 \gamma_*} + \| \uppsi \|_{2 \gamma_*} \| \delta_{v, \bar v} \widetilde{DG} \|_{\CC^{\gamma_*}} \right) | b-a |^{3 \gamma_*}. \notag
    \end{align}
    By the mean value theorem and \eqref{eq:bound-on-G}, we get $| \delta_{v, \bar v} \widetilde{G} (a) | \leq M^{(G)}_{2K+L} \| v - \bar v \|_{L^\infty}$ and $| \delta_{v, \bar v} \widetilde{DG} (a) | \leq M^{(G)}_{2K+L} \| v-\overline{v} \|_{L^\infty}$. 
    
    Next, we bound the term $\| \delta_{v, \bar v} \widetilde{DG} \|_{\CC^{\gamma_*}}$. Let us denote for brevity $w := \overline{v} - v$. Using the fundamental theorem of calculus, we can write
    \begin{align*}
        \delta_{v, \bar v} \widetilde{DG}(b) - \delta_{v, \bar v} \widetilde{DG}(a)  
        &= \int_0^1 D^2 G( \psi + v + \lambda w + B )(b) w (b) \, \d \lambda \\
          &\qquad - \int_0^1 D^2 G( \psi + v + \lambda w + B )(a) w (a) \, \d \lambda, 
    \end{align*}
    where we have the matrix-vector products inside the integrals. Thus, by the triangle inequality, 
    \begin{align*}
        &\left| \delta_{v, \bar v} \widetilde{DG}(b) - \delta_{v, \bar v} \widetilde{DG}(a) \right| \\
          &\quad \leq | w (b) - w (a) | \int_0^1 \left| D^2 G( \psi + v + \lambda w +B )(b) \right| \d \lambda \\
        &\qquad \qquad + | w (a) | \int_0^1 \left| D^2 G( \psi + v + \lambda w + B )(b) - D^2 G( \psi + v + \lambda w +B )(a) \right| \d \lambda \\
        &\quad \leq \| w \|_{\CC^{\gamma_*}} |b-a|^{\gamma_*} M^{(G)}_{2K+L} + \| w \|_{L^\infty} |b-a|^{\gamma_*} \, M^{(G)}_{2K+L} \big( K + L + \| w \|_{\CC^{\gamma_*}} \big). 
    \end{align*}
    We use $D^3 G$ to bound the integrand in the last inequality, and this is the place where we need to assume that $G$ is $\CC^3$. In the same way, we can bound 
    \begin{align*}
        \left| \delta_{v, \bar v} \widetilde{DG}(a) \right| &\leq |w (a) | \int_0^1 \left| D^2 G( \psi + v + \lambda w +B )(a) \right| \d \lambda \\
        &\leq \| w \|_{L^\infty} \, M^{(G)}_{2K+L} \big(K + L + \| w \|_{L^\infty} \big).
    \end{align*}
    Then, using $\| w \|_{\CC^{\gamma_*}} \leq \|\overline{v}\|_{\CB_{t_K}} + \|v\|_{\CB_{t_K}} \leq 2L$ and $\| w \|_{L^\infty} \leq \| w \|_{\CC^{\gamma_*}}$, we get 
    \begin{equation}
        \| \delta_{v, \bar v} \widetilde{DG} \|_{\CC^{\gamma_*}} \lesssim M^{(G)}_{2K+L} (1+K+L) \| v-\overline{v} \|_{\CC^{\gamma_*}}. \label{eq11}
    \end{equation}

    Similarly, we can write the remainder $R^{\delta_{v, \bar v} \widetilde{G}}$ as 
    \begin{align*}
        &R^{\delta_{v, \bar v} \widetilde{G}} 
        = \delta_{v, \bar v} \widetilde{G}(b) - \delta_{v, \bar v} \widetilde{G}(a) - \delta_{v, \bar v} \widetilde{DG}(a) ( \psi(b) - \psi(a) ) \\
          &\qquad= \int_0^1 DG( \psi + v + \lambda w +B )(b) w(b) \, \d \lambda - \int_0^1 DG( \psi + v + \lambda w +B )(a) w(a) \, \d \lambda \\
          &\qquad\qquad\qquad - \left( \int_0^1 D^2 G( \psi + v + \lambda w )(a) w(a) \, \d \lambda \right) ( \psi(b) - \psi(a)). 
    \end{align*}
    Then by adding and subtracting $\int_0^1 DG( \psi + v + \lambda w +B )(b) w (a) \, \d \lambda$ and using the triangle inequality together with \eqref{eq:bound-on-G}, we get
    \begin{align*}
        &| R^{\delta_{v, \bar v} \widetilde{G}} | 
        \leq \| w \|_{\CC^{2 \gamma_*} } |b-a|^{2 \gamma_*} M^{(G)}_{2K+L} \\
          &\qquad + \| w \|_{L^\infty} \int_0^1 \Big| DG( \psi + v + \lambda w +B )(b) - DG( \psi + v + \lambda w +B )(a) \Big. \\
        &\hspace{7cm} - D^2 G( \psi + v + \lambda w +B )(a) ( \psi(b) - \psi(a) ) \Big| \, \d \lambda \\
          &\leq \| w \|_{\CC^{2 \gamma_*} } |b-a|^{2 \gamma_*} M^{(G)}_{2K+L} \\
        &\qquad + \| w \|_{L^\infty} \int_0^1 \Bigl( | D^2 G( \psi(b) + v(a) + \lambda w(a) +B ) | \big( | \overline{v}(b) - \overline{v}(a) | + \lambda | w (b) - w (a) | \big) \\
        &\hspace{8cm}  + \frac{1}{2} M^{(G)}_{2K+L} \big( \psi(b) - \psi(a) \big)^2\Bigr) \, \d \lambda. 
    \end{align*}
    We use the Taylor expansion inside the integral in the last inequality, and this is the place we need to assume $G$ is $\CC^3$. Using the function $M^{(G)}$, we can bound it by
    \begin{align*}
        \| w \|_{\CC^{2 \gamma_*} } |b-a|^{2 \gamma_*} M^{(G)}_{2K+L} & + \| w \|_{L^\infty} |b-a|^{2 \gamma_*} M^{(G)}_{2K+L} \big( \| \overline{v} \|_{\CC^{2 \gamma_*} } + \| w \|_{\CC^{2 \gamma_*} } \big) \\
        &\quad + \frac{1}{2} \| w \|_{L^\infty} |b-a|^{2 \gamma_*} M^{(G)}_{2K+L} \| \psi \|_{\CC^{\gamma_*}}^2. 
    \end{align*}
    With this, we can get the estimate
    \begin{align}
        \| R^{\delta_{v, \bar v} \widetilde{G}} \|_{2 \gamma_*} 
        &\lesssim M^{(G)}_{2K+L} (1 + K^2 + L) \| w \|_{\CC^{2 \gamma_*} }. \label{eq12} 
    \end{align}

    Using \eqref{eq11} and \eqref{eq12} in \eqref{eq10}, the H\"{o}lder norm of $\delta_v \CZ(s, \cdot)$ can be bounded by
    \begin{align*}
        \left\| \delta_{v, \bar v} \CZ - \delta_{v, \bar v} \CZ \right\|_{\CC^{\gamma_*}} 
        &\lesssim M^{(G)}_{2K+L} K \| w \|_{\CC^{\gamma_*}} + M^{(G)}_{2K+L} K (1 + K + L) \| w \|_{\CC^{2 \gamma_*} } \\
          &\qquad + M^{(G)}_{2K+L} K (1 + K^2 + L) \| w \|_{\CC^{2 \gamma_*} } \\
        &\lesssim  M^{(G)}_{2K+L} K (1 + K^2 + L) \| w \|_{\CC^{2 \gamma_*} }, 
    \end{align*}
    which is exactly \eqref{eq:delta-Z-bound-1}. 

    Now, we will prove the bound \eqref{eq:delta-Z-bound-2}. From the definition of the $\widetilde{\CB}_{t_K}$-norm in \eqref{def: tilde CB_T norm}, we have 
    \begin{equation*}
        \| w \|_{ \CC^{2 \gamma_*} } \leq t_K^{ - \frac{\beta}{\alpha} ( \gamma_* + \kappa_0 ) } \| w \|_{ \widetilde{\CB}_{t_K} }. 
    \end{equation*}
    Using this bound and following the calculation above, we can derive, similarly to \eqref{eq11} and \eqref{eq12},
    \begin{align}
        \| \delta_{v, \bar v} \widetilde{DG} \|_{\CC^{\gamma_*}} 
        &\lesssim M^{(G)}_{2K+L} (1+K+L) \| w \|_{\CC^{\gamma_*}} \label{eq11-new} \\
          \| R^{\delta_{v, \bar v} \widetilde{G}} \|_{2 \gamma_*} 
          &\lesssim M^{(G)}_{2K+L} \Bigl( K^2 + t_K^{ - \frac{\beta}{\alpha} ( \gamma_* + \kappa_0 ) } L \Bigr) \| w \|_{ \widetilde \CB_{t_K} }, \label{eq12-new} 
    \end{align}
    Applying \eqref{eq11-new} and \eqref{eq12-new} to \eqref{eq10}, the H{\"o}lder norm of $\delta_v \CZ(s, \cdot)$ can be bounded by
    \begin{align*}
        \left\| \delta_{v, \bar v} \CZ - \delta_{v, \bar v} \CZ \right\|_{\CC^{\gamma_*}} 
        &\lesssim M^{(G)}_{2K+L} K \| w \|_{ \widetilde \CB_{t_K} } + M^{(G)}_{2K+L} K (1+K+L) \| w \|_{ \widetilde \CB_{t_K} } \\
          &\qquad + M^{(G)}_{2K+L} K \Bigl( K^2 + t_K^{ - \frac{\beta}{\alpha} ( \gamma_* + \kappa_0 ) } L \Bigr) \| w \|_{ \widetilde \CB_{t_K} } \\
        &\lesssim M^{(G)}_{2K+L} K \Bigl( 1 + K^2 + ( 1+ t^{ - \frac{\beta}{\alpha} ( \gamma_* + \kappa_0 ) } ) L \Bigr) \| w \|_{ \widetilde \CB_{t_K} }, 
    \end{align*}
    which is the desired bound \eqref{eq:delta-Z-bound-2}. 
\end{proof}

\subsection{Estimates on $\CM_1$ and $\CM_2$}

Using the estimates on $\CZ_v$ from the previous section, we can bound the function $\CM_1$ defined in \eqref{def: M_i}. 

\begin{lemma} \label{norm of CM_1}
    Let $\widetilde{\gamma} \in (0, \infty) \setminus \N$ be such that $0 < \widetilde{\gamma} - \gamma_* < \alpha - 1$ and let $0 < \kappa < \gamma_* - \widetilde{\gamma} + \alpha$, where $\gamma_*$ is defined in \eqref{def: gamma_*}. Then, for any $t \geq 0$, we have
    \begin{align}\label{eq:M1-bound-1}
        \| \CM_1(v) \|_{\CB_{t_K}} 
        &\leq C t_K^{ \beta - \frac{\beta}{\alpha} ( 2 - \gamma_* + \kappa ) } K \bigl( 1 + K^2 + L \bigr) M^{(G)}_{2 K+L}, \\
        \| \CM_1(v) - \CM_1(\overline{v}) \|_{\CB_{t_K}} \label{eq:M1-bound-2}
        &\leq C t_K^{ \beta - \frac{\beta}{\alpha} ( 2 - \gamma_* + \kappa) } K (1+ K^2 + L) M^{(G)}_{2K+L} \| v - \bar v \|_{\CB_{t_K}},
    \end{align}
    provided $\| v \|_{\CB_{t_K}} \leq L$ and $\| \bar v \|_{\CB_{t_K}} \leq L$.
\end{lemma}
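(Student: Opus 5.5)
The plan is to view $\CM_1(v)$ as $\partial_x X$, where $X$ solves the fractional equation \eqref{eq:fractional-equation} with forcing $f = \CZ_v$ and zero initial data. Via the integration-by-parts definition \eqref{eq:heat-solution}, this reads
\[
\CM_1(v)(t) = \partial_x \int_0^t \frac{\beta}{2\pi}(t-s)^{\beta-1} P_1(t-s) * \CZ_v(s)\,\d s - \partial_x\, t^\beta P_3(t) * \CZ_v(0).
\]
We then apply Proposition~\ref{sch est for fractional heat eq} with $m=1$ and $\gamma = \gamma_{\ini} = \gamma_*$, taking $\lambda = 0$ since in the $\CB$ setting \eqref{eq:Z-bound-1} gives a bound on $\CZ_v$ that is uniform in time. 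Both norms entering the $\CB_{t_K}$-norm \eqref{def: CB_T norm} are then handled by this proposition, as follows.

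For the $\CC^1$ part, Proposition~\ref{sch est for fractional heat eq} is stated only for noninteger target regularity. So we estimate $\|\partial_x X(t)\|_{\CC^{1}}$ by $\|\partial_x X(t)\|_{\CC^{1+\epsilon}}$ for a tiny $\epsilon$, or equivalently work with $\widetilde\gamma = 1+\epsilon$ and absorb $\epsilon$ into $\kappa$. This yields the bound $t^{\beta - \frac{\beta}{\alpha}(2 - \gamma_* + \kappa)}\sup_{s\le t}\|\CZ_v(s)\|_{\CC^{\gamma_*}}$, where we take $m=1$ and the effective $\widetilde\gamma$ equal to $1$. For the weighted $\CC^{\widetilde\gamma}$ part, we use \eqref{eq:sch est for fractional heat eq} with the given $\widetilde\gamma$, which produces $t^{\beta - \frac{\beta}{\alpha}(\widetilde\gamma +1 - \gamma_* + \kappa)}$. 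Multiplying by the weight $t^{\frac{\beta}{\alpha}(\widetilde\gamma - 1 + \kappa_0)}$ gives $t^{\beta - \frac{\beta}{\alpha}(2-\gamma_*+\kappa - \kappa_0)}$, which is at most a constant times the claimed power for $t\le T$ once $\kappa$ is adjusted ($\kappa_0$ only helps). The hypotheses $0<\widetilde\gamma-\gamma_*<\alpha-1$ and the range of $\kappa$ are precisely the conditions $\gamma - m < \widetilde\gamma < \gamma + \alpha - m$ of the proposition. Continuity in time, $\CM_1(v)\in \CC([0,T];\CC^1)\cap\CC((0,T];\CC^{\widetilde\gamma})$, follows from the last statement of the proposition, because the exponent $\beta - \frac{\beta}{\alpha}(2-\gamma_*+\kappa)$ is positive. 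Positivity holds since $\alpha > 5/3$, $\gamma_*>1/3$, and $\alpha + \gamma_* > 2$, the latter guaranteed by the choice of $\kappa_0$ in \eqref{def: gamma_*}.

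Inserting \eqref{eq:Z-bound-1} gives \eqref{eq:M1-bound-1}. For \eqref{eq:M1-bound-2}, linearity of $f\mapsto X$ means $\CM_1(v)-\CM_1(\bar v)$ is the same construction applied to $f=\delta_{v,\bar v}\CZ$. Repeating the argument with \eqref{eq:delta-Z-bound-1} in place of \eqref{eq:Z-bound-1} gives the result.

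The main obstacle I expect is bookkeeping the random stopped time $t_K$. The bounds on $\CZ_v$ hold only up to $\sigma_K$, whereas the convolution integrates over $[0,t]$. I would handle this by working with $t\le \sigma_K$ throughout, so that all $s\in[0,t_K]$ satisfy \eqref{eq:noise-bounded}, and by checking that the constant from Proposition~\ref{sch est for fractional heat eq} is deterministic. A second delicate point is the initial term $X_3$: it requires $\CZ_v(0)\in\CC^{\gamma_*}$, which follows from \eqref{eq:Z-bound-1} at $t=0$, using $v_0=u_0-\psi_0\in\CC^1$.
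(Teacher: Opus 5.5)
Your proposal is correct and follows the paper's proof. Like the paper, you apply Proposition~\ref{sch est for fractional heat eq} with $m=1$, $\gamma=\gamma_{\ini}=\gamma_*$ and $\lambda=0$ for the two regularities $1$ and $\widetilde\gamma$, insert \eqref{eq:Z-bound-1} (respectively \eqref{eq:delta-Z-bound-1}, using linearity), and get continuity in time from the last part of the proposition. Your treatment of the integer exponent $1$ via $\CC^{1+\epsilon}$ and of the weight $t^{\frac{\beta}{\alpha}(\widetilde\gamma-1+\kappa_0)}$ fills in details the paper leaves implicit. One correction: the lemma's stated upper bound for $\kappa$ is $1$ larger than what the proposition allows, so these are not ``precisely'' the proposition's conditions. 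Your ``adjusting $\kappa$'' step (prove the bound for small $\kappa$, then weaken it using $t\le T$) is therefore genuinely needed.
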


\begin{proof}
We restrict the time variables $t'<t$ to the interval $[0, \tau_K]$ and use them throughout the proof.  For $\zeta=\widetilde{\gamma}$ or $\zeta=1$, under the assumption $0 < \zeta - \gamma_* < \alpha -1 -\kappa$, we apply Proposition~\ref{sch est for fractional heat eq} with $m=1$ and $\gamma_{\ini} = \gamma_*$ and get
    \begin{align*}
        \| \CM_1(v) (t) \|_{\CC^{\zeta}}
        \lesssim t^{ \beta - \frac{\beta}{\alpha} (\zeta - \gamma_* + 1 + \kappa ) } \| \CZ_v \|_{\CC^{\gamma_*}_{t} } + t^{ \beta - \frac{\beta}{\alpha} (\zeta - \gamma_* + 1 + \kappa) } \left\| \CZ_v(0) \right\|_{\CC^{\gamma_*}}. 
    \end{align*}
    Using Lemma~\ref{Holder_norm_of_mathcal(Z)}, we bound both terms by a constant times
    \begin{align*}
        &t^{ \beta - \frac{\beta}{\alpha}(\zeta - \gamma_* + 1 + \kappa ) } K \bigl( 1 + K^2 + L \bigr) M^{(G)}_{2 K+L}.
    \end{align*}
Thus, using the last part of Proposition~\ref{sch est for fractional heat eq}, we obtain the desired bound \eqref{eq:M1-bound-1}. 

    To prove \eqref{eq:M1-bound-2}, we use the map \eqref{def: delta v, v bar} and write
    \begin{align*}
        \delta_{v, \bar v}\CM_1(t,x)
        &= \partial_x \int_0^t P_{\beta, \alpha}(t-s) \ast \partial_s^{1-\beta} \delta_{v, \bar v} \CZ(s, x) \, \d s.
    \end{align*}
   As above, we apply Proposition~\ref{sch est for fractional heat eq} and obtain  
    \begin{align*}
        \| \delta_{v, \bar v}\CM_1(t) \|_{ \CC^{\zeta} } 
        &\lesssim t^{ \beta - \frac{\beta}{\alpha} ( \zeta - \gamma_* + 1 + \kappa)} \| \delta_{v, \bar v} \CZ \|_{ \CC^{\gamma_*}_{t}} + t^{ \beta - \frac{\beta}{\alpha} ( \zeta - \gamma_* + 1 + \kappa)} \left\| \delta_{v, \bar v} \CZ(0) \right\|_{ \CC^{\gamma_*} },
    \end{align*}
    which is bounded, using \eqref{eq:delta-Z-bound-1}, by a constant multiple of 
    \begin{align*}
        t^{ \beta - \frac{\beta}{\alpha} (\zeta - \gamma_* + 1 + \kappa)} M^{(G)}_{2K+L} K (1+ K^2 + L^2) \| v - \bar v \|_{\CB_{t}},
    \end{align*}
    Combining these bounds with the last part of Proposition~\ref{sch est for fractional heat eq}, we get \eqref{eq:M1-bound-2}.
\end{proof}

Now, we will estimate the term $\CM_2(v)$ defined in \eqref{def: M_i}. Defining 
\begin{equation} \label{def: CV(t,x)}
    \CV_v(s,y) \coloneqq G(\psi + v +B)(s,y) \partial_y v(s,y), 
\end{equation}
we write 
\begin{equation*}
    \CM_2(v) = \int_0^t \left( P_{\beta, \alpha}(t-s) \ast \partial_s^{1-\beta} \CV_v(s) \right) (x) \, \d s,
\end{equation*}
which, as before, we interpret using integration by parts. 

As follows from the Schauder estimates, $v(s, \cdot )$ is $\CC^{\widetilde{\gamma}}$ for $s > 0$, where $\widetilde{\gamma} < \gamma_* + \alpha - 1$. Hence, $\CV_v$ is well-defined and with the H{\"o}lder regularity $\min( \gamma_* , \gamma_* + \alpha -2 )$. However, $\CV(0, \cdot)$ does not have this spatial regularity because of our assumption on the initial condition $v_0$. This is why we bound only the sup-norm of $\CV_v(s, \cdot )$. 

\begin{lemma} 
    For any $t \geq 0$ we have 
    \begin{align}
        \| \CV_v(t_K) \|_{L^\infty}  
        &\leq L M^{(G)}_{2 K+L},
        \label{Holder_norm_of_mathcal(V)} \\
          \| \delta_{v, \bar v} \CV(t_K) \|_{L^\infty} 
          &\leq (1+L) M^{(G)}_{2 K+L} \| v - \overline{v} \|_{\CB_{t_K}},
          \label{Holder_norm_of_delta mathcal(V)} 
    \end{align}
    provided $\|v\|_{\CB_{t_K}} \leq L$ and $\|\bar v\|_{\CB_{t_K}} \leq L$. Furthermore, for any $t > 0$ we have 
    \begin{align}
        \| \CV_v (t_K) \|_{L^\infty}  
        &\leq t_K^{ - \frac{\beta}{\alpha} ( 1 - \gamma_* + \kappa_0 ) } L M^{(G)}_{2 K+L},
        \label{Holder_norm_of_mathcal(V)-new} \\
          \| \delta_{v, \bar v} \CV (t_K)\|_{L^\infty} 
          &\leq t_K^{ - \frac{\beta}{\alpha} ( 1 - \gamma_* + \kappa_0 ) } (1+L) M^{(G)}_{2 K+L} \| v - \overline{v} \|_{\widetilde \CB_{t_K}},
          \label{Holder_norm_of_delta mathcal(V)-new} 
    \end{align}
    provided $\|v\|_{\widetilde\CB_{t_K}} \leq L$ and $\|\bar v\|_{\widetilde\CB_{t_K}} \leq L$.
\end{lemma}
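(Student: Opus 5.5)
The bounds are pointwise in $(t,x)$: we estimate the product $\CV_v = G(\psi+v+B)\,\partial_y v$ factor by factor. As in the preceding proofs, all times are restricted to $[0,\tau_K]$, and we write $t$ for $t_K$. On this interval the bound \eqref{eq:noise-bounded} holds. Moreover, $\|v_t\|_{L^\infty}\le L$ because the $\CC^1$ norm (in the $\CB$ case) or the $\CC^{\gamma_*}$ norm (in the $\widetilde\CB$ case) appears in the norm without weight. Hence the argument $\psi+v+B$ stays in the box of size $2K+L$, and \eqref{eq:bound-on-G} gives $\|G(\psi_t+v_t+B_t)\|_{L^\infty}\le M^{(G)}_{2K+L}$.

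For \eqref{Holder_norm_of_mathcal(V)} we bound $\|\partial_y v_t\|_{L^\infty}\le \|v\|_{\CC^1_t}\le \|v\|_{\CB_t}\le L$ and multiply the two bounds. For \eqref{Holder_norm_of_mathcal(V)-new} we use the weighted term with $\eta=1$ in \eqref{def: tilde CB_T norm}. It gives $\|v_t\|_{\CC^1}\le t^{-\frac{\beta}{\alpha}(1-\gamma_*+\kappa_0)}\|v\|_{\widetilde\CB_t}$, and hence the stated factor $t^{-\frac{\beta}{\alpha}(1-\gamma_*+\kappa_0)}L M^{(G)}_{2K+L}$.

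For the differences we write
\[
\delta_{v,\bar v}\CV = \bigl(G(\psi+v+B)-G(\psi+\bar v+B)\bigr)\partial_y v + G(\psi+\bar v+B)\,\partial_y(v-\bar v).
\]
By the mean value theorem, the first bracket is bounded by $M^{(G)}_{2K+L}\|v-\bar v\|_{L^\infty}$, since the segment between the two arguments stays in the same box. It multiplies $\|\partial_y v\|_{L^\infty}$, which is at most $L$ in the $\CB$ case and at most $t^{-\frac{\beta}{\alpha}(1-\gamma_*+\kappa_0)}L$ in the $\widetilde\CB$ case. The second term is bounded by $M^{(G)}_{2K+L}\|\partial_y(v-\bar v)\|_{L^\infty}$. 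This is at most $M^{(G)}_{2K+L}\|v-\bar v\|_{\CB_t}$, respectively at most $M^{(G)}_{2K+L}t^{-\frac{\beta}{\alpha}(1-\gamma_*+\kappa_0)}\|v-\bar v\|_{\widetilde\CB_t}$. In both cases $\|v-\bar v\|_{L^\infty}$ is controlled by the corresponding unweighted norm, so adding the two terms gives the factor $(1+L)$.

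The only point that needs care is the time weights in the $\widetilde\CB$ case. The two terms must carry the same singular factor $t^{-\frac{\beta}{\alpha}(1-\gamma_*+\kappa_0)}$, and they do: in each term exactly one factor contains a derivative, and that factor is estimated through the $\eta=1$ weight. This step involves no real obstacle, as the proof is elementary.
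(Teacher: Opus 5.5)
Your proposal is correct and follows essentially the same route as the paper. Like the paper, you bound $G(\psi+v+B)$ by $M^{(G)}_{2K+L}$ via \eqref{eq:bound-on-G}, split $\delta_{v,\bar v}\CV$ with the triangle inequality and the mean value theorem on the $G$-difference, and in the $\widetilde\CB$ case replace $\|v_t\|_{\CC^1}$ by $t^{-\frac{\beta}{\alpha}(1-\gamma_*+\kappa_0)}\|v\|_{\widetilde\CB_t}$, which is exactly why the paper can say these bounds follow ``similarly''.
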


\begin{proof}
As in the previous proofs, we restrict $t \in [0, \tau_K]$ and write $t$ everywhere. Recalling the definitions of time $t_K$ in \eqref{stop time t_K}, bound \eqref{eq:bound-on-G} and norm \eqref{def: CB_T norm}, we get
    \begin{equation*}
        \| \CV_v(t) \|_{L^\infty} 
        \leq M^{(G)}_{2 K+L} \| \partial_y v_t \|_{L^\infty} 
        \leq M^{(G)}_{2 K+L} \| v_t \|_{\CC^1} 
        \leq M^{(G)}_{2 K+L} L. 
    \end{equation*}
    For $\delta_{v, \bar v} \CV$, we get by the triangle inequality
    \begin{align*}
        &\| \delta_{v, \bar v} \CV(t) \|_{L^\infty}
        \leq \| G(\psi_t + v_t +B_t) - G(\psi_t + \bar v_t +B_t) \|_{L^\infty} \| \partial_y v_t \|_{L^\infty} \\
        &\hspace{4cm} + \| G(\psi_t + \bar v_t +B_t) \|_{L^\infty} \| \partial_y v_t -\partial_y \bar v_t \|_{L^\infty} \\
        &\qquad \leq M^{(G)}_{2 K+L} \| v_t - \overline{v}_t \|_{L^\infty} \| v_t \|_{\CC^1} + M^{(G)}_{2 K+L} \| v_t - \overline{v}_t \|_{\CC^1} \leq M^{(G)}_{2 K+L} (1+L) \| v_t - \overline{v}_t \|_{\CC^1}. 
    \end{align*}
    We prove the bounds concerning the $\widetilde \CB_{t}$-norm similarly, except that this time we use the estimate $\|v_t\|_{\CC^1} \leq t^{ - \frac{\beta}{\alpha} ( 1 - \gamma_* + \kappa_0 ) } \|v\|_{\widetilde \CB_{t}}$. 
\end{proof}

Using the preceding result, we can now estimate $\CM_2(v)$. 

\begin{lemma}\label{norm of CM_2}
    Let $\widetilde{\gamma} \in (0, \infty) \setminus \N$ be such that $0 < \widetilde{\gamma} < \alpha$ and let $0 < \kappa < \alpha - \widetilde{\gamma}$. Then for any $t > 0$ the function $\CM_2(v)$, defined in \eqref{def: M_i}, can be bounded by
    \begin{align}\label{eq:M2-bound-1}
        \| \CM_2(v) \|_{\CB_{t_K}} 
        &\leq C t_K^{ \beta - \frac{\beta}{\alpha} ( 1 + \kappa)} L M^{(G)}_{2 K+L}, \\
        \| \CM_2(v) - \CM_2(\overline{v}) \|_{\CB_{t_K}} 
        &\leq C t_K^{ \beta - \frac{\beta}{\alpha} ( 1 + \kappa)} ( 1 + L ) M^{(G)}_{2 K+L} \| v - \overline{v} \|_{\CB_{t_K}}, \label{eq:M2-bound-2}
    \end{align}
    provided $\| v \|_{\CB_{t_K}} \leq L$ and $\| \bar v \|_{\CB_{t_K}} \leq L$.
    \end{lemma}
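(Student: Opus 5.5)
The plan mirrors the proof of Lemma~\ref{norm of CM_1}, with the rough path input $\CZ_v$ replaced by the classical product $\CV_v$ and with no spatial derivative outside the convolution, i.e.\ $m=0$. Work on $[0,t_K]$ and write $t$ for $t_K$. The norm $\|\cdot\|_{\CB_t}$ has two parts: $\|\CM_2(v)\|_{\CC^1_t}$ and $\sup_s s^{\frac{\beta}{\alpha}(\widetilde\gamma-1+\kappa_0)}\|\CM_2(v)(s)\|_{\CC^{\widetilde\gamma}}$. I will estimate both with Proposition~\ref{sch est for fractional heat eq}, applied to $f=\CV_v$ with
\[
\gamma=\gamma_{\ini}=0, \qquad \lambda=0, \qquad m=0 .
\]
The sup-norm bounds \eqref{Holder_norm_of_mathcal(V)} on $\CV_v(s)$, including at $s=0$, control $\|f\|_{\CC^{0,0}_t}$ and $\|f(0)\|_{\CC^0}$. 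The remark after Lemma~\ref{Sch-est w/o t}, that $\CC^0\subset\CB^0_{\infty,\infty}$, makes the choice $\gamma=0$ legitimate. The convolution with the $P_1$ and $P_3$ kernels is then interpreted through \eqref{eq:heat-solution}, as in the definition.

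\textbf{Main estimate.} For $\zeta\in\{1^+,\widetilde\gamma\}$ with $0<\zeta<\alpha$ and $0<\kappa<\alpha-\zeta$, estimate \eqref{eq:sch est for fractional heat eq} gives
\[
\|\CM_2(v)(s)\|_{\CC^\zeta}\lesssim s^{\beta-\frac{\beta}{\alpha}(\zeta+\kappa)}\bigl(\|\CV_v\|_{L^\infty_s}+\|\CV_v(0)\|_{L^\infty}\bigr)\lesssim s^{\beta-\frac{\beta}{\alpha}(\zeta+\kappa)} L M^{(G)}_{2K+L}.
\]
\begin{itemize}
\item For the $\CC^1$ part: since $1\in\N$ is excluded in Proposition~\ref{sch est for fractional heat eq}, bound $\|\cdot\|_{\CC^1}$ by $\|\cdot\|_{\CC^{1+\epsilon}}$ for a small $\epsilon$ and absorb $\epsilon$ into $\kappa$. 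This yields the factor $s^{\beta-\frac{\beta}{\alpha}(1+\kappa)}\le t^{\beta-\frac{\beta}{\alpha}(1+\kappa)}$. The exponent is positive because $\alpha>1$ and $\kappa$ is small, so the last part of Proposition~\ref{sch est for fractional heat eq} also gives continuity in time into $\CC^1$, i.e.\ $\CM_2(v)\in\CC([0,t];\CC^1)$.
\item For the weighted $\CC^{\widetilde\gamma}$ part: multiply by $s^{\frac{\beta}{\alpha}(\widetilde\gamma-1+\kappa_0)}$. This gives $s^{\beta-\frac{\beta}{\alpha}(1+\kappa-\kappa_0)}$, which is at most a constant times $t^{\beta-\frac{\beta}{\alpha}(1+\kappa)}$ on bounded time intervals.
\end{itemize}
Together these give \eqref{eq:M2-bound-1}. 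For \eqref{eq:M2-bound-2}, $\CM_2$ is linear in the forcing, so the same computation applied to $\delta_{v,\bar v}\CV$ with \eqref{Holder_norm_of_delta mathcal(V)} replacing \eqref{Holder_norm_of_mathcal(V)} gives the factor $(1+L)M^{(G)}_{2K+L}\|v-\bar v\|_{\CB_t}$.

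\textbf{Expected obstacle.} The delicate point is bookkeeping. First, the parameters must be checked: $\gamma-m<\widetilde\gamma$ holds trivially, and $\widetilde\gamma<\alpha$ with $\kappa<\alpha-\widetilde\gamma$ is exactly the hypothesis of the lemma. Second, the integer exponent $1$ needs the $\epsilon$-shift described above. Third, one must check that $\CV_v$ is continuous on $[0,t]\times\T$, which Proposition~\ref{sch est for fractional heat eq} needs. This follows from $v\in\CC([0,t];\CC^1)$ together with continuity of $\psi$ and $B$, so $G(\psi+v+B)\partial_yv$ is continuous.
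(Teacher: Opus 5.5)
Your proposal is correct and follows the paper's proof: you apply Proposition~\ref{sch est for fractional heat eq} with $m=0$ and $\gamma=\gamma_{\ini}=0$ to the forcing $\CV_v$ (respectively $\delta_{v,\bar v}\CV$), insert the sup-norm bounds \eqref{Holder_norm_of_mathcal(V)} and \eqref{Holder_norm_of_delta mathcal(V)}, take $\zeta=1$ and $\zeta=\widetilde\gamma$, and use the last part of the proposition for time continuity. Your $\epsilon$-shift, which avoids the integer exponent $\zeta=1$ excluded by the proposition, and your explicit treatment of the weight $s^{\frac{\beta}{\alpha}(\widetilde\gamma-1+\kappa_0)}$ are small points of care that the paper omits.
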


\begin{proof} As before, we restrict $t \in [0, \tau_K]$ and write $t$ throughout the proof. Under assumption $0 < \zeta < \alpha -\kappa$, we apply Proposition~\ref{sch est for fractional heat eq} with $m=0$ and $\gamma = \gamma_{\ini} = 0$ and obtain 
    \begin{align*}
        \| \CM_2(v) (t) \|_{ \CC^{\zeta} } 
        &\leq \int_0^{t} \left\| P_{\beta, \alpha}(t-s) \ast \partial^{1-\beta}_s \CV_v(s) \right\|_{ \CC^{\zeta}} \d s \\
          &\lesssim t^{ \beta - \frac{\beta}{\alpha} (\zeta + \kappa)} \| \CV_v \|_{\CC_{t}} + t^{ \beta - \frac{\beta}{\alpha} (\zeta + \kappa)} \left\| \CV_v(0) \right\|_{ L^\infty }.  
    \end{align*}
    Combining this estimate with \eqref{Holder_norm_of_mathcal(V)}, we get 
    \begin{equation*}
        \| \CM_2(v) (t) \|_{ \CC^{\zeta} } 
        \lesssim t^{ \beta - \frac{\beta}{\alpha} (\zeta + \kappa)} M^{(G)}_{2 K+L} L.
    \end{equation*}
    Taking $\zeta=\widetilde{\gamma}$ and $\zeta=1$ and using the last part of Proposition~\ref{sch est for fractional heat eq}, we get \eqref{eq:M2-bound-1}.

    To prove the statement about the difference of $\CM_2$, we write 
    \begin{equation*}
        \delta_{v, \bar v} \CM_2(t) = \int_0^{t} \left( P_{\beta, \alpha}(t-s) \ast \partial^{1-\beta}_s \delta_{v, \bar v} \CV(s) \right)(x)\, \d s. 
    \end{equation*}
    As above, we apply Proposition~\ref{sch est for fractional heat eq} and get
    \begin{align*}
        \| \delta_{v, \bar v} \CM_2 (t) \|_{ \CC^{\zeta} } 
        &\lesssim t^{ \beta - \frac{\beta}{\alpha} (\zeta + \kappa)} \| \delta_{v, \bar v} \CV(t) \|_{L^\infty} + t^{ \beta - \frac{\beta}{\alpha} (\zeta + \kappa)} \left\| \delta_{v, \bar v} \CV(0) \right\|_{ L^\infty }, 
    \end{align*}
    and the estimate \eqref{Holder_norm_of_delta mathcal(V)} and the last part of Proposition~\ref{sch est for fractional heat eq} yield \eqref{eq:M2-bound-2}. 
\end{proof}

Next, we will consider Assumption~\ref{assump:initial}~\eqref{assump: initial 2nd kind} on the initial condition. The key difference is that now $\CV_v(0)$ is not well-defined because $v_0 = u_0 - \psi_0$ is no longer differentiable. However, under the Assumption~\ref{assump:initial}~\eqref{assump: initial 2nd kind}, $u_0 \in \CC^{\zeta_0}$ with $\zeta_0 > \frac{1}{2}$, so we have a chance to define the nonlinear term through Young's integral \cite{MR1555421}. 

Before starting the estimation, let us specify the functions that will be considered. Recalling the map defined in \eqref{def: M_i}, the functions $\CM_3(v)$ and $\CM_4(v)$ remain unchanged under the two kinds of initial condition assumptions. However, it will be convenient to combine the two terms $\CM_1(v)$ and $\CM_2(v)$ and define 
\begin{equation} \label{def: tilde CM_12}
    \widetilde{\CM}_{1,2}(v)(t,x) 
    \coloneqq \partial_x \int_0^t \left(\RL_{t-, s}^{1-\beta} P_{\beta, \alpha}(t-s) \ast \CU_v(s) \right)(x) \, \d s + \bigl(I_{t-, 0}^{\beta} P_{\beta, \alpha}(t) \ast \CU_v(0) \bigr)(x)
\end{equation}
instead, where  
\begin{equation*} 
    \CU_v(t,x) \coloneqq
    \begin{cases}
        \displaystyle\CZ_v(t,x) + \int_{-\pi}^x \CV_v(t,z)\, \d z & \text{for} ~ t > 0, \\[0.3cm]
        \displaystyle\int_{-\pi}^x G(u_0)\, \d u_0 & \text{for} ~ t = 0. 
\end{cases}
\end{equation*}
Notice that our assumption $u_0 \in \CC^{\zeta_0}$ with $\zeta_0 > \frac{1}{2}$ allows us to use Young's integral \cite{MR1555421} to define $\CU_v(0)$ as a function in $\CC^{\zeta_0}$. Recall that $\CZ_v$ and $\CV_v$ are defined in \eqref{def: CZ(t,x)} and $\eqref{def: CV(t,x)}$ respectively, and we write $\CV_v(s,x) = \partial_x \int_{-\pi}^x \CV_v(s,z) \d z$ and then pull $\partial_x$ outside the integral over time in \eqref{def: tilde CM_12} for convenience. For this initial condition, we are going to work with the expansion 
\begin{equation}\label{def: M_i-tilde}
    \CM(v) = \widetilde{\CM}_{1,2} + \CM_3(v) + \CM_4(v).
\end{equation}

After we specify the function we are going to estimate, we can bound $\widetilde{\CM}_{1,2}$. 

\begin{lemma} \label{norm of CM_12}
    Let $\widetilde{\gamma} \in (0, \infty) \setminus \N$ be such that $0 < \widetilde{\gamma} - \gamma_* < \alpha - 1$ and let $0 < \kappa < \alpha + \gamma_* - 2 - \kappa_0$, where we use the values \eqref{eq:gamma-0}. Then for any $t > 0$ we have 
    \begin{align}\label{eq:M12-bound-1}
        \| \widetilde \CM_{1,2} (v) \|_{ \widetilde \CB_{t_K} } 
        &\leq C t_K^{ \beta - \frac{\beta}{\alpha} ( 2 - \gamma_* + \kappa + \kappa_0 ) } ( 1 + K ) \bigl( 1 + K^2 + L \bigr) M^{(G)}_{2 K+L} \\
        &\hspace{5cm} + t_K^{ \beta - \frac{\beta}{\alpha} ( 1 - \zeta - \gamma_* +\kappa ) } \| u_0 \|_{\CC^{\zeta_0}}^2, \notag \\
        \| \widetilde \CM_{1,2}(v) - \widetilde \CM_{1,2}(\overline{v}) \|_{ \widetilde \CB_{t_K} } 
        &\leq C t_K^{ \beta - \frac{\beta}{\alpha} ( 2 - \gamma_* + \kappa + \kappa_0 ) } ( 1 + K ) \bigl( 1 + K^2 + L \bigr) \| v-\overline{v} \|_{ \widetilde \CB_{t_K} }, \label{eq:M12-bound-2}
    \end{align}
    provided $\| v \|_{ \widetilde \CB_{t_K}} \leq L$ and $\| \bar v \|_{ \widetilde \CB_{t_K}} \leq L$. 
\end{lemma}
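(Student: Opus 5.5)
The plan is to view $\widetilde\CM_{1,2}(v)$ as $\partial_x X$, where $X$ solves the fractional equation \eqref{eq:fractional-equation} with forcing $\CU_v$ in the sense of \eqref{eq:heat-solution}. The last term in \eqref{def: tilde CM_12} is exactly $-X_3$ from the proof of Proposition~\ref{sch est for fractional heat eq}, evaluated with $f(0) = \CU_v(0)$, and $X_2$ vanishes. We then apply Proposition~\ref{sch est for fractional heat eq} with $m=1$ to every component of the $\widetilde\CB_{t_K}$-norm \eqref{def: tilde CB_T norm}, that is, for $\zeta \in \{\gamma_*, 2\gamma_*, 1, \widetilde\gamma\}$. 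We take $\gamma = \gamma_*$ for $t>0$ and $\gamma_{\ini} = \zeta_0$ for the initial datum. The proof splits into three steps: (a) bounding $\CU_v(t)$ for $t>0$ in a weighted $\CC^{\gamma_*}$ norm, (b) bounding $\CU_v(0)$, and (c) collecting the time powers.

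For step (a), Lemma~\ref{Holder_norm_of_mathcal(Z)}, bound \eqref{eq:Z-bound-2}, gives $\|\CZ_v(t)\|_{\CC^{\gamma_*}} \lesssim K(1+K^2+(1+t^{-\frac{\beta}{\alpha}(\gamma_*+\kappa_0)})L)M^{(G)}_{2K+L}$. For the primitive $\int_{-\pi}^x \CV_v$, its $\CC^1$ norm, and hence its $\CC^{\gamma_*}$ norm, is controlled by $\|\CV_v(t)\|_{L^\infty}$. By \eqref{Holder_norm_of_mathcal(V)-new} this is at most $t^{-\frac{\beta}{\alpha}(1-\gamma_*+\kappa_0)}LM^{(G)}_{2K+L}$. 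The worse of the two blow-up rates is $\lambda = \frac{\beta}{\alpha}(1-\gamma_*+\kappa_0)$, and since $\gamma_*>1/3$ and $\beta<1<\alpha$ we have $\lambda<1$. This gives $\|\CU_v\|_{\CC^{\gamma_*,\lambda}_t} \lesssim (1+K)(1+K^2+L)M^{(G)}_{2K+L}$, where the $(1+K)$ factor absorbs the $K$ from the $\CZ$ term.

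Next we substitute into \eqref{eq:sch est for fractional heat eq}. This yields $\|\partial_x X(t)\|_{\CC^\zeta} \lesssim t^{\beta-\frac{\beta}{\alpha}(\zeta-\gamma_*+1+\kappa)-\lambda}\|\CU_v\|_{\CC^{\gamma_*,\lambda}_t}$. We multiply by the weight $t^{\frac{\beta}{\alpha}(\zeta-\gamma_*+\kappa_0)}$ from \eqref{def: tilde CB_T norm}. The exponent then becomes $\beta - \frac{\beta}{\alpha}(1+\kappa-\kappa_0) - \lambda$. This is slightly off the claimed $\beta-\frac{\beta}{\alpha}(2-\gamma_*+\kappa+\kappa_0)$; the discrepancy only comes from bookkeeping of $\kappa_0$ and should be fixed by adjusting $\kappa$. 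It is positive by the assumed range $\kappa < \alpha+\gamma_*-2-\kappa_0$.

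For step (b), and for the difference estimate, we proceed as follows. For $\CU_v(0) = \int G(u_0)\,\d u_0$, the Young integral estimate gives $\|\CU_v(0)\|_{\CC^{\zeta_0}} \lesssim \|G(u_0)\|_{\CC^{\zeta_0}}\|u_0\|_{\CC^{\zeta_0}}$, which is quadratic in $\|u_0\|_{\CC^{\zeta_0}}$ (the constant depending on $G$ is absorbed). The $X_3$ part of Proposition~\ref{sch est for fractional heat eq} then yields the second term of \eqref{eq:M12-bound-1} after weighting. The difference bound \eqref{eq:M12-bound-2} follows the same route using \eqref{eq:delta-Z-bound-2} and \eqref{Holder_norm_of_delta mathcal(V)-new}; there the initial term cancels because $\CU_v(0)$ does not depend on $v$. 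Time continuity comes from the last part of Proposition~\ref{sch est for fractional heat eq}.

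I expect the main obstacle to be the $\CC^{\gamma_*}$ and $\CC^{2\gamma_*}$ components at small $t$. For $\zeta=\gamma_*$ the condition $\gamma-m<\zeta$ becomes $\gamma_*-1<\gamma_*$, which holds. However, the initial-datum term requires $\zeta_0 > \frac12$ to beat the derivative loss in $t^{\beta-\frac{\beta}{\alpha}(\zeta-\zeta_0+1+\kappa)}$ so that it stays bounded for $\zeta = \gamma_*$, where no weight is available. This will need the choice $\gamma_* < \zeta_0$ together with $\alpha>5/3$.
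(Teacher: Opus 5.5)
Your proposal follows the paper's proof essentially step for step: Proposition~\ref{sch est for fractional heat eq} with $m=1$, $\gamma=\gamma_*$, $\gamma_{\ini}=\zeta_0$ and $\lambda=\frac{\beta}{\alpha}(1-\gamma_*+\kappa_0)$ is applied to $\CU_v$, using \eqref{eq:Z-bound-2}, \eqref{Holder_norm_of_mathcal(V)-new}, the Young bound on $\CU_v(0)$, and the cancellation of the initial term in the difference estimate. The ``discrepancy'' you flag is only bookkeeping and needs no adjustment of $\kappa$: the $\CC^{\gamma_*}$ component of \eqref{def: tilde CB_T norm} carries no weight, and for $\zeta=\gamma_*$ the proposition gives exactly $t^{\beta-\frac{\beta}{\alpha}(2-\gamma_*+\kappa+\kappa_0)}$, while your larger exponent for the weighted components implies the stated bound since $t_K\le T$ (also, $\zeta_0>\frac12$ is needed only to define $\CU_v(0)$ as a Young integral, not to control the time exponent).
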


\begin{proof}
    First, we need to estimate $\CU_v$. At time $0$, Young's integral \cite{MR1555421} has the bound
    \begin{equation*}
        \| \CU_v(0) \|_{\CC^{\zeta_0}} \lesssim \| u_0 \|_{\CC^{\zeta_0}}^2.
    \end{equation*}
As usual, we consider only $t \in [0, \tau_K]$. Then for $t > 0$ we have 
    \begin{align*}
        \| \CU_v(t) \|_{\CC^{\gamma_*}} 
        &\leq \| \CZ_v(t) \|_{\CC^{\gamma_*}} + \left\| \int_{-\pi}^{\cdot} \CV_v(t,z)\, \d z \right\|_{\CC^{\gamma_*}}
        \leq \| \CZ_v(t) \|_{\CC^{\gamma_*}} + \left\| \CV_v \right\|_{L^\infty_t}\\
          &\lesssim \big( 1 + t^{ - \frac{\beta}{\alpha} ( 1 - \gamma_* + \kappa_0 ) } \big) ( 1 + K ) \bigl( 1 + K^2 + L \bigr) M^{(G)}_{2 K+L}, 
    \end{align*}
    where we used \eqref{eq:Z-bound-2} and \eqref{Holder_norm_of_mathcal(V)-new}.

    Under the assumption $0 < \zeta - \gamma_* < \alpha -1 -\kappa$ and $\zeta_0 > \frac{1}{2}$ (see the Assumption~\ref{assump:initial}~\eqref{assump: initial 2nd kind}), we can applying Proposition~\ref{sch est for fractional heat eq} with $m=1$, $\gamma_{\ini} = \zeta_0$, and $\lambda = \frac{\beta}{\alpha} ( 1 - \gamma_* + \kappa_0 )$, and obtain 
    \begin{align*}
        &\| \widetilde \CM_{1,2} (v) (t) \|_{ \CC^{\zeta} } 
        \lesssim t^{ \beta - \frac{\beta}{\alpha} ( \zeta - 2\gamma_* + 2 + \kappa + \kappa_0 ) } \| \CU_v \|_{\CC^{\gamma_*}_t} + t^{ \beta - \frac{\beta}{\alpha} ( \zeta - \zeta_0 + 1 +\kappa ) } \left\| \CU_v(0) \right\|_{\CC^{\zeta}} \\
        &\qquad \lesssim t^{ \beta - \frac{\beta}{\alpha} ( \zeta - 2\gamma_* + 2 + \kappa + \kappa_0 ) } M^{(G)}_{2 K+L} ( 1 + K ) \bigl( 1 + K^2 + L \bigr) + t^{ \beta - \frac{\beta}{\alpha} ( \zeta - \zeta_0 + 1 +\kappa ) } \| u_0 \|_{\CC^{\zeta_0}}^2.
    \end{align*}
    Using this bound with $\zeta \in \{\widetilde{\gamma}, 1, 2 \gamma_*, \gamma_*\}$ and using the last part of Proposition~\ref{sch est for fractional heat eq}, we get \eqref{eq:M12-bound-1}.

    Next, we will estimate the difference of $\widetilde \CM_{1,2}$. Using the notation \eqref{def: delta v, v bar}, we write 
    \begin{align*}
        \delta_{v, \bar v}\widetilde \CM_{1,2}(t,x)
        &= \partial_x \int_0^t \left(\RL_{t-, s}^{1-\beta} P_{\beta, \alpha}(t-s) \ast \delta_{v, \bar v}\, \CU(s) \right)(x) \, \d s \\
          &\hspace{5cm} + \bigl(I_{t-, 0}^{\beta} P_{\beta, \alpha}(t) \ast \delta_{v, \bar v}\, \CU(0) \bigr)(x) \\
        &= \partial_x \int_0^t \left(\RL_{t-, s}^{1-\beta} P_{\beta, \alpha}(t-s) \ast \delta_{v, \bar v}\, \CU(s) \right)(x) \, \d s, 
    \end{align*}
    where the boundary term $\delta_{v, \bar v}\, \CU(0)$ vanishes because its value at time $0$ is independent of $v$. For $t > 0$ we have 
    \begin{align*}
        \| \delta_{v, \bar v}\, \CU(t) \|_{\CC^{\gamma_*}} 
        &\leq \| \delta_{v, \bar v} \CZ(t) \|_{\CC^{\gamma_*}} + \left\| \int_{-\pi}^{\cdot} \delta_{v, \bar v} \CV(t,z)\, \d z \right\|_{\CC^{\gamma_*}}
        \leq \| \delta_{v, \bar v} \CZ(t) \|_{\CC^{\gamma_*}} + \left\| \delta_{v, \bar v} \CV \right\|_{L^\infty_t}\\
          &\lesssim \big( 1 + t^{ - \frac{\beta}{\alpha} ( 1 - \gamma_* + \kappa_0 ) } \big) M^{(G)}_{2K+L} ( 1 + K ) \bigl( 1 + K^2 + L \bigr) \| v-\overline{v} \|_{ \widetilde \CB_{t} },
    \end{align*}
    where we used \eqref{eq:delta-Z-bound-2} and \eqref{Holder_norm_of_delta mathcal(V)-new}. Using $0 < \zeta - \gamma_* < \alpha -1 -\kappa$, we apply Proposition~\ref{sch est for fractional heat eq} and obtain 
    \begin{align*}
        \| \delta_{v, \bar v}\widetilde \CM_{1,2} (t) \|_{ \CC^{\zeta} } \lesssim t^{ \beta - \frac{\beta}{\alpha} (\zeta - 2\gamma_* + 2 + \kappa + \kappa_0 ) } M^{(G)}_{2K+L} ( 1 + K ) \bigl( 1 + K^2 + L \bigr) \| v-\overline{v} \|_{ \widetilde \CB_{t} }. 
    \end{align*}
    Taking the value of $\zeta$ as above and using the last part of Proposition~\ref{sch est for fractional heat eq}, we get \eqref{eq:M12-bound-2}. 
\end{proof}

\subsection{Estimates on $\CM_3$}

Now, we turn to the term $\CM_3(v)$ defined in \eqref{def: M_i}, which is very similar to $\CM_1$. It will be convenient to write  
\begin{align*}
    \CM_3(v) (s,y) 
    &= \int_0^t \left( P_{\beta, \alpha}(t-s) \ast \partial_s^{1-\beta} \CW_v(s) \right) (x) \; \d s,
\end{align*}
where $\CW_v (s,y) \coloneqq F\big( \psi(s,y) + v(s,y) + B_s \big)$.

\begin{lemma} \label{norm of CM_3}
    Let $\widetilde{\gamma} \in (0, \infty) \setminus \N$ be such that $0 < \widetilde{\gamma} < \alpha$ and  let $0 < \kappa < \alpha - \widetilde{\gamma}$. Then for any $t > 0$ we have
    \begin{align}\label{eq:M3-bound-1}
        \| \CM_3(v) \|_{\CB_{t_K}} 
        &\leq C t_K^{ \beta - \frac{\beta}{\alpha} ( 1 + \kappa)} M^{(F)}_{2 K+L}, \\
        \| \CM_3(v) - \CM_3(\overline{v}) \|_{\CB_{t_K}} 
        &\leq C t_K^{ \beta - \frac{\beta}{\alpha} ( 1 + \kappa)} M^{(F)}_{2 K+L} \| v - \overline{v} \|_{\CB_{t_K}}, \label{eq:M3-bound-2}
    \end{align}
    provided $\| v \|_{\CB_{t_K}} \leq L$ and $\| \bar v \|_{\CB_{t_K}} \leq L$, and 
    \begin{align}\label{eq:M3-bound-3}
        \| \CM_3(v) \|_{\widetilde \CB_{t_K}} 
        &\leq C t_K^{ \beta - \frac{\beta}{\alpha} ( \gamma_* + \kappa)} M^{(F)}_{2 K+L}, \\
        \| \CM_3(v) - \CM_3(\overline{v}) \|_{ \widetilde \CB_{t_K} } 
        &\leq C t_K^{ \beta - \frac{\beta}{\alpha} ( \gamma_* + \kappa)} M^{(F)}_{2 K+L} \| v - \overline{v} \|_{\widetilde \CB_{t_K}}, \notag 
    \end{align}
    provided $\| v \|_{ \widetilde \CB_{t_K} } \leq L$ and $\| \bar v \|_{ \widetilde \CB_{t_K} } \leq L$.
    \end{lemma}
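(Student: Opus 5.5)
The plan mirrors the proofs of Lemmas~\ref{norm of CM_1} and~\ref{norm of CM_2}. We restrict time to $[0,\tau_K]$ as before and use Proposition~\ref{sch est for fractional heat eq} with $m=0$ applied to $f=\CW_v$. The input is an $L^\infty$ bound on $\CW_v$ and on $\delta_{v,\bar v}\CW$. On $[0,t_K]$ we have $\|\psi_t\|_{L^\infty}\le K$ and $|B_t|\le K$. In the $\CB$ case $\|v_t\|_{L^\infty}\le L$; in the $\widetilde\CB$ case $\|v_t\|_{L^\infty}\le \|v\|_{\CC^{\gamma_*}_t}\le L$. Hence the argument $\psi+v+B$ stays in $[-(2K+L),2K+L]^n$, and by \eqref{def: M_F and M_G}
\[
\|\CW_v(s)\|_{L^\infty}\le M^{(F)}_{2K+L},\qquad \|\delta_{v,\bar v}\CW(s)\|_{L^\infty}\le M^{(F)}_{2K+L}\|v_s-\bar v_s\|_{L^\infty},
\]
the latter by the mean value theorem (the segment between $v$ and $\bar v$ stays in the same box). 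No blow-up at $s=0$ occurs, since only the sup norm of $v$ enters. This is why $\CM_3$ does not need the rough path machinery.

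Next apply Proposition~\ref{sch est for fractional heat eq} with $\gamma=\gamma_{\ini}=0$ (using $L^\infty\subset \CB^0_{\infty,\infty}$, as remarked after Lemma~\ref{Sch-est w/o t}), $\lambda=0$, $m=0$ and target regularity $\zeta\in(0,\alpha-\kappa)\setminus\N$. This gives
\[
\|\CM_3(v)(t)\|_{\CC^\zeta}\lesssim t^{\beta-\frac{\beta}{\alpha}(\zeta+\kappa)}M^{(F)}_{2K+L},
\]
and similarly for the difference with the extra factor $\|v-\bar v\|_{\CB_t}$ (or $\|v-\bar v\|_{\widetilde\CB_t}$, since the $L^\infty$ norm is dominated by either). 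For the $\CB_{t_K}$ norm we need $\zeta=1$ and $\zeta=\widetilde\gamma$. Since $1\in\N$ is excluded in the proposition, we take $\zeta=1+\epsilon'$ with $\epsilon'$ absorbed into $\kappa$ and use $\CC^{1+\epsilon'}\subset\CC^1$. The $\CC^1$ part gives the factor $t^{\beta-\frac{\beta}{\alpha}(1+\kappa)}$. For the weighted part we multiply by $t^{\frac{\beta}{\alpha}(\widetilde\gamma-1+\kappa_0)}$, which leaves $t^{\beta-\frac{\beta}{\alpha}(1+\kappa-\kappa_0)}\lesssim t^{\beta-\frac{\beta}{\alpha}(1+\kappa)}$ for $t\le T$. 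Continuity in time with values in $\CC^1$ on $[0,T]$, and in $\CC^{\widetilde\gamma}$ on $(0,T]$, follows from the last part of the proposition, because $\beta-\frac{\beta}{\alpha}(1+\kappa)>0$ when $\alpha>1+\kappa$.

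For $\widetilde\CB_{t_K}$ we take $\zeta\in\{\gamma_*,2\gamma_*,1+,\widetilde\gamma\}$. The weight $t^{\frac{\beta}{\alpha}(\eta-\gamma_*+\kappa_0)}$ attached to $\eta$ turns each term into $t^{\beta-\frac{\beta}{\alpha}(\gamma_*+\kappa-\kappa_0)}$, and the unweighted $\CC^{\gamma_*}$ term gives exactly $t^{\beta-\frac{\beta}{\alpha}(\gamma_*+\kappa)}$. This yields \eqref{eq:M3-bound-3} and its difference analogue.

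The main obstacle I expect is formal rather than analytic: Proposition~\ref{sch est for fractional heat eq} assumes $f$ continuous with $f(0)\in\CC^{\gamma_{\ini}}$. At $t=0$ in the $\widetilde\CB$ case, $\CW_v(0)=F(u_0)$ is only $\CC^{\zeta_0}$, which is still continuous and so acceptable with $\gamma_{\ini}=0$. One also has to check that the integer-exponent issue and the $\kappa$-ranges ($\kappa<\alpha-\widetilde\gamma$) match the hypotheses when $\zeta=1^+$ is used. Both points are handled by shrinking $\kappa$ slightly and absorbing the resulting small powers of $t\le T$ into the constant.
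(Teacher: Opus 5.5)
Your proposal is correct and follows essentially the same route as the paper: apply Proposition~\ref{sch est for fractional heat eq} with $m=0$, $\gamma=\gamma_{\ini}=0$ and no blow-up weight, using the sup-norm bound $\|\CW_v\|_{L^\infty}\le M^{(F)}_{2K+L}$ and the mean-value bound $\|\delta_{v,\bar v}\CW\|_{L^\infty}\le M^{(F)}_{2K+L}\|v-\bar v\|_{L^\infty}$. Your extra bookkeeping fills in details the paper leaves implicit but does not change the argument: you replace the excluded integer exponent $\zeta=1$ by $1+\epsilon'$ and track the time weights in the $\CB$ and $\widetilde\CB$ norms.
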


\begin{proof}
As always we restrict $t \in [0, \tau_K]$. For any $0 < \zeta < \alpha -\kappa$, we apply Proposition~\ref{sch est for fractional heat eq} with $m=0$ and $\gamma = \gamma_{\ini} = 0$ and obtain 
    \begin{align*}
        \| \CM_3(v) (t) \|_{ \CC^{\zeta} } 
        \lesssim t^{ \beta - \frac{\beta}{\alpha} ( \zeta + \kappa)} \| \CW_v \|_{L^\infty_t} + t^{ \beta - \frac{\beta}{\alpha} ( \zeta + \kappa)} \left\| \CW_v(0) \right\|_{ L^\infty }.  
    \end{align*}
    Using the maximum function $M^{(F)}_x$ defined in \eqref{def: M_F and M_G}, we bound it by a constant multiple of $t^{ \beta - \frac{\beta}{\alpha} (\zeta + \kappa)} M^{(F)}_{2 K+L}$, which gives \eqref{eq:M3-bound-1}. Here, we used the last part of Proposition~\ref{sch est for fractional heat eq}.

    Furthermore, we write 
    \begin{equation*}
        \delta_{v, \bar v} \CM_3(t, x) = \int_0^{t} \left( P_{\beta, \alpha}(t-s) \ast \partial^{1-\beta}_s \delta_{v, \bar v} \CW(s) \right)(x)\, \d s. 
    \end{equation*}
    As above, we use Proposition~\ref{sch est for fractional heat eq} and get
    \begin{align*}
        \| \delta_{v, \bar v} \CM_3 (t) \|_{ \CC^{\zeta} } 
        &\lesssim t^{ \beta - \frac{\beta}{\alpha} ( \delta_{v, \bar v} \CM_3 + \kappa)} \| \delta_{v, \bar v} \CW \|_{L^\infty_{t}} + t^{ \beta - \frac{\beta}{\alpha} ( \delta_{v, \bar v} \CM_3 + \kappa)} \left\| \delta_{v, \bar v} \CW(0) \right\|_{ L^\infty }, 
    \end{align*}
    Using the estimate $\| \delta_{v, \bar v} \CW \|_{L^\infty_{t}} \leq M^{(F)}_{2 K+L} \| v - \overline{v} \|_{\CC_{t}} \leq M^{(F)}_{2 K+L} \| v - \overline{v} \|_{\CB_{t}}$, we get 
    \begin{align*}
        \| \delta_{v, \bar v} \CM_3 (t) \|_{ \CC^{\zeta} } 
        &\lesssim t^{ \beta - \frac{\beta}{\alpha} ( \zeta + \kappa)} M^{(F)}_{2 K+L} \| v - \overline{v} \|_{\CB_{t}}
    \end{align*}
    and hence \eqref{eq:M3-bound-2}, where we used the last part of Proposition~\ref{sch est for fractional heat eq}.
    
    The bounds \eqref{eq:M3-bound-3} can be proved in the same way.
\end{proof}

\subsection{Estimates on $\CM_4$}

Recall that the term $\CM_4(v)$, defined in \eqref{def: M_i}, is 
\begin{equation*}
    \CM_4(v) (t,x) = \int_{-\pi}^\pi P_{\beta, \alpha}(t, x-y) (u_0 - \psi_0)(y) \, \d y,
\end{equation*}
and we have the following estimates for it. 

\begin{lemma} \label{norm of CM_4}
Let $\widetilde{\gamma} \in (0, \infty) \setminus \N$ such that $0 < \widetilde{\gamma} - \gamma_* < \alpha$. 
    For any $t > 0$ and $0 < \kappa < \kappa_0$, where $\kappa_0$ is defined in \eqref{def: CB_T norm}, we have 
    \begin{align}\label{eq:M4-bound-1}
        \| \CM_4(v) \|_{\CB_t} &\leq C \bigl(1 + t^{ \frac{\beta}{\alpha} ( \kappa_0 - \kappa ) } \bigr) \| u_0 - \psi_0 \|_{\CC^1}, \\
        \| \CM_4(v) \|_{ \widetilde \CB_t } &\leq C \bigl(1 + t^{ \frac{\beta}{\alpha} ( \kappa_0 - \kappa ) }\bigr) \bigl(\| u_0 \|_{\CC^{\gamma_*}} + \| \psi_0 \|_{\CC^{\gamma_*}}\bigr). 
        \label{eq:M4-bound-2}
    \end{align}
\end{lemma}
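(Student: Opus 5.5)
The plan is to treat $\CM_4(v)(t) = P_{\beta,\alpha}(t) * w_0$ with $w_0 \coloneqq u_0 - \psi_0$ and to estimate each component of the norms \eqref{def: CB_T norm} and \eqref{def: tilde CB_T norm} separately, using only the kernel estimates of Section~\ref{sec:schauder} for the kernel $P = P_{\beta,\alpha}$. As in the proof of Proposition~\ref{sch est for fractional heat eq}, this kernel satisfies Assumption~\ref{assump:function-phi} with $\phi(x) = \frac{1}{\sqrt{2\pi}} E_\beta(-x)$ and $\ell_\star = 1$. Note that $\CM_4(v)$ does not depend on $v$ at all, so no difference estimates are needed.

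\emph{Sup-in-time parts of the norms.} For the term $\|\CM_4(v)\|_{\CC^1_t}$ in $\CB_t$ I will use Young's inequality: $\|P_s * w_0\|_{L^\infty} \le \|P_s\|_{L^1}\|w_0\|_{L^\infty}$. Since $\partial_x$ commutes with the convolution, the derivative is handled the same way, giving $\|\partial_x(P_s * w_0)\|_{L^\infty} \le \|P_s\|_{L^1}\|\partial_x w_0\|_{L^\infty}$. Lemma~\ref{L^1-bound of P} with $m=0$ gives $\|P_s\|_{L^1} \lesssim 1$ uniformly in $s$, so $\|\CM_4(v)(s)\|_{\CC^1} \lesssim \|w_0\|_{\CC^1}$. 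The same argument applied to H\"older difference quotients gives $\|\CM_4(v)(s)\|_{\CC^{\gamma_*}} \lesssim \|w_0\|_{\CC^{\gamma_*}} \le \|u_0\|_{\CC^{\gamma_*}} + \|\psi_0\|_{\CC^{\gamma_*}}$.

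\emph{Weighted parts of the norms.} For each weighted component $\|\cdot\|_{\CC^{\eta,\lambda_\eta}_t}$ with $\lambda_\eta = \frac{\beta}{\alpha}(\eta - \gamma + \kappa_0)$, I will apply \eqref{eq:Sch-est w/o t} with $m=0$ and base regularity $\gamma$:
\[
s^{\lambda_\eta}\|P_s * w_0\|_{\CC^{\eta}} \lesssim s^{\frac{\beta}{\alpha}(\kappa_0-\kappa)}\|w_0\|_{\CC^\gamma}.
\]
\begin{itemize}
\item In $\CB_t$ take $\gamma = 1$ and $\eta = \widetilde\gamma$.
\item In $\widetilde\CB_t$ take $\gamma = \gamma_*$ and $\eta \in \{2\gamma_*, 1, \widetilde\gamma\}$.
\end{itemize}
The hypotheses of Lemma~\ref{Sch-est w/o t} hold: $0 \le \eta - \gamma < \alpha$ follows from $\widetilde\gamma - \gamma_* < \alpha$, and $\kappa < \kappa_0$ can be taken small enough to lie in the admissible range $(0, \gamma - \eta + \alpha]$. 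Taking the supremum over $s \le t$ produces the factor $t^{\frac{\beta}{\alpha}(\kappa_0-\kappa)}$. Adding this to the uniform parts gives \eqref{eq:M4-bound-1} and \eqref{eq:M4-bound-2}.

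\emph{Continuity in time.} Membership of $\CM_4(v)$ in $\CC([0,t];\CC^1)$, respectively $\CC([0,t];\CC^{\gamma_*})$, and in $\CC((0,t];\CC^{\widetilde\gamma})$ should also be checked. On $(0,t]$, continuity follows from \eqref{eq:Sch-est w/o t for t-t'}. The expected obstacle is continuity at $s=0$ in the $\CC^1$ (or $\CC^{\gamma_*}$) topology, since these spaces are not separable, so $P_s * w_0 \to w_0$ need not hold in norm. I would first try to show that $P_s * w_0 \to w_0$ in a slightly weaker H\"older norm, via Littlewood--Paley blocks using $\widehat P_s(k) = \frac{1}{\sqrt{2\pi}} E_\beta(-s^\beta|k|^\alpha) \to \frac{1}{\sqrt{2\pi}}$. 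If that fails, I would fall back on noting that the stated bounds themselves only require the sup-norm estimates above.
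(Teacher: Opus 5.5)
Your proposal is correct and follows the paper's route. The unweighted parts of the norms come from Young's inequality and the uniform bound $\|P_s\|_{L^1}\lesssim 1$ of Lemma~\ref{L^1-bound of P}, and the weighted parts come from Lemma~\ref{Sch-est w/o t} with $m=0$ and base regularity $1$ (resp.\ $\gamma_*$), the weight turning $s^{-\frac{\beta}{\alpha}(\eta-\gamma+\kappa)}$ into $s^{\frac{\beta}{\alpha}(\kappa_0-\kappa)}$.

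The continuity at $s=0$ that you flag is not a real obstacle; the paper only asserts it. Your claim that $\CC^1(\T)$ is non-separable is wrong: it is separable. Since $\|P_s\|_{L^1}\lesssim 1$ and $\widehat P_s(k)\to \frac{1}{\sqrt{2\pi}}$, the family $P_s$ is an approximate identity, so $P_s*w_0\to w_0$ in $\CC^1$. Convergence in $\CC^{\gamma_*}$ then follows by interpolation, because $w_0=u_0-\psi_0\in\CC^{\gamma'}$ for some $\gamma'>\gamma_*$.
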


\begin{proof}
    We apply Lemma~\ref{Sch-est w/o t} with $m=0$ and $\gamma = 1$, to get for any $0 < t < t'$
    \begin{align*}
        \| \CM_4(v) (t) \|_{ \CC^{\widetilde{\gamma}} } &\lesssim t^{ - \frac{\beta}{\alpha} ( \widetilde{\gamma} - 1 + \kappa)} \| u_0 - \psi_0 \|_{\CC^1}.
    \end{align*}
    Furthermore, Lemma~\ref{L^1-bound of P} yields 
    \begin{equation*}
        \| \CM_4(v) (t) \|_{\CC^1} \leq \| P_{\beta, \alpha}(t) \|_{L^1} \| u_0 - \psi_0 \|_{\CC^1} \lesssim \| u_0 - \psi_0 \|_{\CC^1}.
    \end{equation*}
    The time continuity of the map $\CM_4(v)$ can be proved as for the other maps $\CM_i(v)$, and we get \eqref{eq:M4-bound-1}.

    The bound \eqref{eq:M4-bound-2} is proved in the same way, but using Lemma~\ref{Sch-est w/o t} with $\gamma = \gamma_*$.
\end{proof}

\subsection{Proof of Theorem~\ref{thm:main}}
\label{sec:main-proof}

Combining the preceding results, we can prove Theorem~\ref{thm:main}. Throughout this proof, we fix a realization of the noise $\xi$, such that the stationary solution $\psi$ of the linear equation \eqref{eq:FSH} can be lifted to a rough path $\Psi$, which we use to define the rough path integral in \eqref{def: CZ(t,x)}. Due to Lemma~\ref{lem:lift}, we can do it for almost every realization of $\xi$.

First, we will prove the theorem under Assumption~\ref{assump:initial}\eqref{assump: initial 1st kind} on the initial condition. Recalling \eqref{def: M_i} and using Lemmas~\ref{norm of CM_1}, \ref{norm of CM_2}, \ref{norm of CM_3} and \ref{norm of CM_4}, we get for any $T > 0$
\begin{align*}
    \| \CM(v) \|_{\CB_{T_K}} 
    &\leq \sum_{i=1}^4 \| \CM_i(v) \|_{\CB_{T_K}} \\
      &\leq C_1 T_K^{ \beta - \frac{\beta}{\alpha} ( 2 - \gamma_* + \kappa) } K \bigl( 1 + K^2 + L \bigr) M^{(G)}_{2 K+L} + C_2 T_K^{ \beta - \frac{\beta}{\alpha} ( 1 + \kappa)} L M^{(G)}_{2 K+L} \\
    &\qquad + C_3 T_K^{ \beta - \frac{\beta}{\alpha} ( 1 + \kappa)} M^{(F)}_{2 K+L} + C_4 \bigl(1 +  T_K^{ \frac{\beta}{\alpha} ( \kappa_0 - \kappa ) }\bigr) \| u_0 - \psi_0 \|_{\CC^1}. 
\end{align*}
We take a value $K > 0$ sufficiently large so that $\| u_0 - \psi_0 \|_{\CC^1} \leq K$. Since all powers of $T_K$ are strictly positive, we can then estimate the preceding expression as 
\begin{equation}\label{eq:M-bound-proof}
\| \CM(v) \|_{\CB_{T_K}} \leq C_4 K + C(K, L) T_K^{\eta},
\end{equation}
for some $\eta > 0$. Recall the definition of $T_K$ in \eqref{stop time t_K}, which means that we restrict the time variable in this bound to the interval $(0, \sigma_K]$. Then we fix $L > 0$ such that $C_4 K \leq \frac{1}{2} L$ and choose $T^*(K,L) \in (0, \sigma_K]$ sufficiently small, depending on $K$ and $L$, so that the right-hand side is bounded by $L$ provided $\|v \|_{\CB_{T^*(K,L)}} \leq L$. It means that $\CM$ maps the ball $\{v \in \CB_{T} : \|v \|_{\CB_{T}} \leq L\}$ to itself for any $0 < T \leq T^*(K,L)$.

Next, we are going to show that $\CM(v)$ is a contraction map. Since the initial condition $u_0 - \psi_0$ is fixed, we have  
\begin{equation*}
    \CM(v) - \CM(\overline{v}) = \sum_{i=1}^3 \bigl(\CM_i(v) - \CM_i(\overline{v})\bigr). 
\end{equation*}
Applying Lemmas~\ref{norm of CM_1}, \ref{norm of CM_2} and \ref{norm of CM_3}, we get 
\begin{align*}
    &\| \CM(v) - \CM(\overline{v}) \|_{\CB_{T_K}} 
    \leq C_5 T_K^{ \beta - \frac{\beta}{\alpha} ( 2 - \gamma_* + \kappa) } K (1+ K^2 + L) M^{(G)}_{2K+L} \| v - \bar v \|_{\CB_{T_K}} \\
      &\qquad\qquad + C_6 T_K^{ \beta - \frac{\beta}{\alpha} ( 1 + \kappa)} ( 1 + L ) M^{(G)}_{2 K+L} \| v - \bar v \|_{\CB_{T_K}} + C_7 T_K^{ \beta - \frac{\beta}{\alpha} ( 1 + \kappa)} M^{(F)}_{2 K+L} \| v - \bar v \|_{\CB_{T_K}},
\end{align*}
where the powers are again all strictly positive. Taking $0< T^{**}(K,L) \leq T^*(K,L)$ small enough, also depending on $K$ and $L$, we get
\begin{equation*}
    \| \CM(v) - \CM(\overline{v}) \|_{\CB_{T^{**}(K,L)}} \leq \frac{1}{2} \| v - \overline{v} \|_{\CB_{T^{**}(K,L)}}
\end{equation*} 
provided $\| v \|_{\CB_{T^{**}(K,L)}} \leq L$ and $\| \overline{v} \|_{\CB_{T^{**}(K,L)}} \leq L$.

We conclude that $\CM$ is a contraction map on the ball $\{v \in \CB_{T^{**}(K,L)} : \|v \|_{\CB_{T^{**}(K,L)}} \leq L\}$. Therefore, by the Banach fixed point theorem, there is a unique $v \in \CB_{T^{**}(K,L)}$ such that $v = \CM(v)$ on $[0, T^{**}(K,L)]$, which means that \eqref{eq:FSBL} has a unique local solution. 

A similar argument can be applied in the case of an initial condition satisfying Assumption~\ref{assump:initial}\eqref{assump: initial 2nd kind}. Recalling the expansion \eqref{def: M_i-tilde} and applying Lemmas~\ref{norm of CM_12}, \ref{norm of CM_3} and \ref{norm of CM_4}, we get
\begin{align*}
    &\| \CM(v) \|_{\widetilde \CB_{T_K}} 
    \leq \| \widetilde \CM_{1,2}(v) \|_{\widetilde \CB_{T_K}} + \| \CM_3(v) \|_{\widetilde \CB_{T_K}} + \| \CM_4(v) \|_{\widetilde \CB_{T_K}} \\
      &\qquad \leq C_1 T_K^{ \beta - \frac{\beta}{\alpha} ( 2 - \gamma_* + \kappa + \kappa_0 ) } ( 1 + K ) \bigl( 1 + K^2 + L \bigr) M^{(G)}_{2 K+L} + C_2 T_K^{ \beta - \frac{\beta}{\alpha} ( 1 - \zeta - \gamma_* +\kappa ) } \| u_0 \|_{\CC^{\zeta_0}}^2 \\
    &\qquad\qquad + C_3 T_K^{ \beta - \frac{\beta}{\alpha} ( \gamma_* + \kappa)} M^{(F)}_{2 K+L} + C_4 \bigl(1 + t^{ \frac{\beta}{\alpha} ( \kappa_0 - \kappa ) }\bigr) \bigl(\| u_0 \|_{\CC^{\gamma_*}} + K\bigr),
\end{align*}
where we used $\|\psi(0)\|_{\CC^{\gamma_*}} \leq K$ in the last term. Taking $K > 0$ such that $\| u_0 \|_{\CC^{\zeta_0}} \leq K$, and hence $\| u_0 \|_{\CC^{\gamma_*}} \leq K$, we get a bound of the form \eqref{eq:M-bound-proof} but with different constants. Then the same argument as above gives us that $\CM$ maps the ball $\{v \in \widetilde\CB_{T_K} : \|v \|_{\widetilde\CB_{T_K}} \leq L\}$ to itself for any $0 < T \leq T^*(K,L)$, with a different value $T^*(K,L)$.

Furthermore, writing 
\begin{equation*}
    \CM(v) - \CM(\overline{v}) = \left( \widetilde \CM_{1,2}(v) - \widetilde \CM_{1,2}(\overline{v}) \right) + \left( \CM_3(v) - \CM_3(\overline{v}) \right) 
\end{equation*}
and using Lemmas~\ref{norm of CM_12} and \ref{norm of CM_3}, we get 
\begin{align*}
    \| \CM(v) - \CM(\overline{v}) \|_{ \widetilde{\CB}_{T_K} } 
    &\leq C_5 T_K^{ \beta - \frac{\beta}{\alpha} ( 2 - \gamma_* + \kappa + \kappa_0 ) } ( 1 + K ) \bigl( 1 + K^2 + L \bigr) \| v-\overline{v} \|_{ \widetilde \CB_{T_K} } \\
      &\qquad + C_6 T_K^{ \beta - \frac{\beta}{\alpha} ( \gamma_* + \kappa)} M^{(F)}_{2 K+L} \| v - \overline{v} \|_{\widetilde \CB_{T_K}}.
\end{align*}
Using the same argument as above, we get a time $0 < T^{**}(K,L) \leq T^*(K,L)$ such that 
\begin{equation*}
    \| \CM(v) - \CM(\overline{v}) \|_{ \widetilde{\CB}_{T^{**}(K,L)} } \leq \frac{1}{2} \| v - \overline{v} \|_{ \widetilde{\CB}_{T^{**}(K,L)} }
\end{equation*} 
provided $\| v \|_{ \widetilde{\CB}_{T^{**}(K,L)} } \leq L$ and $\| \overline{v} \|_{ \widetilde{\CB}_{T^{**}(K,L)} } \leq L$. By the Banach fixed-point theorem, there is a unique $v \in \widetilde{\CB}_{T^{**}(K,L)}$ such that $v = \CM(v)$ on $[0, T^{**}(K,L)]$.

We now explain how to extend the solution up to its maximal lifetime. Taking an initial condition $u_0$ satisfying one of the cases of Assumption~\ref{assump:initial}, we have constructed a solution $v$ on an interval $[0, T^{**}(K,L)]$. In both cases, the solution is of the form $u_{T^{**}(K,L)} = \psi_{T^{**}(K,L)} + v_{T^{**}((K,L)} + B_{T^{**}(K,L)}$, where $\psi_{T^{**}(K,L)} \in \CC^{\gamma_*}$, $v_{T^{**}(K,L)} \in \CC^{\widetilde{\gamma}}$, and $B_{T^{**}(K,L)}$ is independent of the spatial variable. 
Hence, the value $u_{T^{**}(K,L)}$, considered as a new initial condition, satisfies Assumption~\ref{assump:initial}\eqref{assump: initial 1st kind}. We can therefore iteratively apply the local existence result starting from $u_{T^{**}(K,L)}$, which yields a local solution on a larger time interval. Repeating this procedure, we extend the solution up to the maximal time $\bar T(K) \in (0, \sigma_K]$ such that in the case $\bar T(K) < \sigma_K$ we have 
\[
\lim_{T \nearrow \bar T(K)} \|v \|_{\widetilde{\CB}_{T}} = \infty.
\]
This extension procedure is standard in the PDEs literature (for a stochastic Burgers-type equation it is explained in full detail in \cite{MR3179667}), and we prefer to omit details. The definition of the norm \eqref{def: tilde CB_T norm} and the preceding limit yield 
\begin{equation}\label{eq:convergence-T-K}
\lim_{t \nearrow \bar T(K)} \|v_t\|_{\CC^{\tilde\gamma}} = \infty,
\end{equation}
where we recall that $\tilde \gamma$ can be chosen arbitrarily close but strictly larger than $1$. We note that the local solution $v$ on $[0, \bar T(K))$, defined for a value $K$ by the described procedure, coincides with the local solution on $[0, \bar T(K+1))$, defined for the value $K+1$. This follows from the uniqueness of the solution for every value of $K$ and the fact that if the driving noise is defined by $K$ as in \eqref{eq:noise-bounded}, then it is automatically bounded by $K+1$. Since the stopping times $\sigma_K$ are monotonically increasing, this implies that $\bar T(K) \nearrow \bar T \in (0, \infty]$. Hence, almost surely, for any $T \in (0, \bar T)$ there is $K$ such that $\bar T(K) > T$ and the solution $v$ is defined on $[0, T]$. We note that so far we have worked with a fixed realization of the noise $\xi$, and the time $\bar T$ is defined for this realization. 

Furthermore, one can conclude from \eqref{stop time sigma_K} and Lemma~\ref{lem:lift} that $\sigma_K \nearrow \infty$ almost surely. Indeed, for any $T > 0$ we have
\begin{align*}
\P(\sigma_K \leq T) &\leq \P(\vvvert \Psi \vvvert_{\gamma_*, T} \geq K ) + \P(\| B \|_{L^\infty([0, T])} \geq K\Bigr) \\
&\leq \frac{1}{K} \Bigl( \E \bigl[\vvvert \Psi \vvvert_{\gamma_*, T}\bigr] + \E \bigl[\| B \|_{L^\infty([0, T])}\bigr] \Bigr),
\end{align*}
where we used Markov's inequality. Due to Lemma~\ref{lem:lift} and standard properties of the Brownian motions, both expectations are bounded, and we get 
\begin{align*}
\lim_{K \nearrow \infty} \P(\sigma_K \leq T) = 0.
\end{align*}
Recalling that the sequence $\sigma_K$ is monotonically increasing, we conclude that $\sigma_K \nearrow \infty$ almost surely.

Hence, if $\bar T < \infty$, then $K$ can be chosen so that $\bar T(K) \leq \bar T < \tau_K$. Then the limit \eqref{eq:convergence-T-K} yields $\bar T(K) = \bar T$ for this value $K$, and we get \eqref{eq:convergence-lifetime}.

Lastly, we prove that if $F$ and $G$ are bounded together with their derivatives, then the solution is global in time. We prove it by contradiction. Assume $\bar T < \infty$, then the preceding argument gives that there exists a value $K$ such that $\bar T(K) = \bar T$ and 
\begin{equation*}
    \lim_{T \nearrow \bar T(K)} \|v \|_{\widetilde{\CB}_{T}} = \infty. 
\end{equation*}
If $M^{(F)}_x$ and $M^{(G)}_x$ are bounded, then the constant $C(K,L)$ in \eqref{eq:M-bound-proof} is only linear in $L$. That implies that $T^*(K, L)$ can be chosen to depend only on $K$ and that there exists a constant $L^* = L^*(K, \|u_0 - \psi_0\|_{\CC^1})$ such that for any $L \geq L^*$ we have
\begin{equation*}
    \| \CM(v) \|_{\CB_{T^*(K)}} \leq L,
\end{equation*}
provided $\|v\|_{\CB_{T^*(K)}} \leq L$. This implies $T^*(K) \leq \bar T(K) = \bar T$. Then we can use $v_{T^*(K)}$ as the initial condition to iterate the process to extend the solution by the amount of $T^*(K)$, which is a fixed value depending only on $K$. Therefore, we can extend the solution to infinite time, which gives a contradiction. So, $\bar T = \infty$ if $M^{(F)}_x$ and $M^{(G)}_x$ are bounded.

\appendix

\section{Besov spaces $\CB^\gamma_{\infty, \infty}$}
\label{sec:besov space}

In this section, we will use the Littlewood--Paley theory to introduce the Besov spaces. A useful feature of these spaces is that they provide a characterization of the H\"{o}lder norms in terms of the Fourier transforms of the functions. We are using this characterization to prove the Schauder-type estimates in Section~\ref{sec:schauder}. The reader may consult \cite{MR2768550} for a detailed exposition of the theory. 

Before defining the spaces, we need to introduce a \emph{dyadic partition of unity}.

\begin{lemma} \label{def: dyadic partition of unity}
    Let us define the annulus $\CA := \{ x \in \R : \frac{3}{4} \leq |x| \leq \frac{8}{3} \}$. There exist smooth even functions $\chi, \varphi : \R \to [0,1]$ with supports in the ball $B(0, \frac{4}{3})$ and the annulus $\CA$ respectively, which satisfy 
    \begin{enumerate}[(i)]
        \item \label{def: dyadic partition of unity eq1} $\chi(x) + \displaystyle\sum_{j \geq 0} \varphi(2^{-j} x) = 1$ for all $x \in \R$,
        \item \label{def: dyadic partition of unity eq3} $\supp \, \varphi(2^{-j} \cdot ) \bigcap \supp \, \varphi(2^{-j'} \cdot ) = \emptyset$ for all $j, j' \geq 0$ satisfying $|j-j'|\geq 2$, 
        \item \label{def: dyadic partition of unity eq4} $\supp \, \chi \bigcap \supp \, \varphi(2^{-j} \cdot ) = \emptyset$ for all $j \geq 1$. 
        \item \label{def: dyadic partition of unity eq5} There exists $\xi_0 \in \R_+$ such that $\varphi$ is increasing on $[0,\xi_0]$ and decreasing on $[\xi_0, \infty)$.
    \end{enumerate}
\end{lemma}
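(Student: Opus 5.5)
The plan is to build $\varphi$ and $\chi$ explicitly from one smooth, even, radially decreasing bump, following the standard construction in \cite{MR2768550} but checking the extra monotonicity property (iv) by hand. Fix a smooth even function $\chi:\R\to[0,1]$ that equals $1$ on $[-\frac34,\frac34]$, vanishes outside $(-\frac43,\frac43)$, and is non-increasing on $[0,\infty)$. Such a function exists by the usual mollified-step construction, for instance $\chi(x)=h(\frac43-|x|)/\bigl(h(\frac43-|x|)+h(|x|-\frac34)\bigr)$ with $h(s)=e^{-1/s}\1{s>0}$. Then set $\varphi(x)\coloneqq\chi(x/2)-\chi(x)$.

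\textbf{Support and partition of unity.} The function $\varphi$ is smooth and even. It vanishes for $|x|\le\frac34$, because both terms equal $1$ there. It vanishes for $|x|\ge\frac83$, because both terms equal $0$ there. Hence $\supp\varphi\subset\CA$. It takes values in $[0,1]$ because $\chi$ is non-increasing in $|x|$, which gives $\chi(x/2)\ge\chi(x)$. The sum $\chi(x)+\sum_{j=0}^{J}\varphi(2^{-j}x)$ telescopes to $\chi(2^{-J-1}x)$, which tends to $1$ as $J\to\infty$; this proves (i). For (ii), the support of $\varphi(2^{-j}\cdot)$ lies in $\{\frac34 2^j\le|x|\le\frac83 2^j\}$. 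If $j'\ge j+2$, then $\frac34 2^{j'}\ge 3\cdot2^j>\frac83 2^j$, so the supports are disjoint. For (iii), if $j\ge1$ then $\supp\varphi(2^{-j}\cdot)\subset\{|x|\ge\frac32\}$, which does not meet $\supp\chi\subset B(0,\frac43)$.

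\textbf{Monotonicity (iv), the main obstacle.} A general difference $\chi(x/2)-\chi(x)$ need not be unimodal, so I would choose $\chi$ to make the regimes separate cleanly. On $[0,\frac32]$ we have $x/2\le\frac34$, so $\chi(x/2)=1$ and $\varphi=1-\chi$, which is non-decreasing. On $[\frac43,\infty)$ we have $\chi(x)=0$, so $\varphi(x)=\chi(x/2)$, which is non-increasing. Since $\frac43<\frac32$, these two regions overlap on $[\frac43,\frac32]$. There $\chi(x)=0$ and $\chi(x/2)=1$, so $\varphi\equiv1$.

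Consequently $\varphi$ is non-decreasing on $[0,\frac43]$ and non-increasing on $[\frac43,\infty)$, and (iv) holds with $\xi_0=\frac43$. The key point is that the chosen radii satisfy $\frac43<2\cdot\frac34$, which is what lets the two regimes overlap.
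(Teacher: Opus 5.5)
Your proposal is correct and follows essentially the same route as the paper. Both use the same smooth-step construction of $\chi$ (yours differs only in not rescaling the argument of $h$), both set $\varphi(x)=\chi(x/2)-\chi(x)$, and both rely on the same key observation that $\frac43<\frac32$ forces $\varphi\equiv1$ on $[\frac43,\frac32]$, which gives (iv) with $\xi_0=\frac43$. Unlike the paper, which calls (i)--(iii) easy checks, you verify (i)--(iii) and $\varphi\in[0,1]$ explicitly.
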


\begin{proof}
A proof of this result can be found in \cite[Proposition~2.10]{MR2768550}. Since the property (\ref{def: dyadic partition of unity eq5}) is not stated explicitly in \cite{MR2768550}, we provide some details of the argument to verify it.

We define the auxiliary function
\begin{equation*}
    \eta(x) = 
    \begin{cases}
        e^{-1/x} &\text{for } x > 0, \\
        0 &\text{for } x \leq 0,
    \end{cases}
\end{equation*}
which is increasing and smooth on $\R$. Next, we set $S(x) \coloneqq \frac{ \eta(x) }{ \eta(x) + \eta( 1-x ) }$, which is also increasing and smooth, and moreover satisfies $S(x) = 0$ for $x \leq 0$ and $S(x) = 1$ for $x \geq 1$ and 
\begin{equation} \label{eq: S'(x)}
    S'(x) = \1{0 \leq x \leq 1} \, \frac{1}{x^2 (1-x)^2( \eta(x) + \eta(1-x))^2 } e^{ - \frac{1}{x} - \frac{1}{1-x} }.
\end{equation}
So it is easy to see that $S(x)$ is strictly increasing on $[0,1]$. 

Then the functions $\chi$ and $\phi$ are defined as 
\begin{equation*}
    \chi(x) \coloneqq 1 - S\left( \frac{ |x| - \frac{3}{4} }{ \frac{4}{3} - \frac{3}{4}} \right), \qquad  \varphi(x) \coloneqq \chi\left( \frac{x}{2} \right) - \chi( x ).
\end{equation*}
Then one can verify that these $\chi$ and $\varphi$ have the required properties. The conditions \eqref{def: dyadic partition of unity eq1}, \eqref{def: dyadic partition of unity eq3}, and \eqref{def: dyadic partition of unity eq4} are easy to check, so we will verify the condition \eqref{def: dyadic partition of unity eq5}. 

Indeed, we can see  $\chi(x) = 1$ for $|x| \leq \frac{3}{4}$ and $\chi(x) = 0$ for $|x| \geq \frac{4}{3}$. Moreover from \eqref{eq: S'(x)}, we can see $\chi$ is strictly decreasing on $[\frac{3}{4}, \frac{4}{3}]$. Therefore, $\varphi(x) = 0$ for $|x| \in [0, \frac{3}{4}] \cup [ \frac{8}{3}, \infty )$, $\varphi(x) = 1$ for $|x| \in [\frac{4}{3}, \frac{3}{2} ]$ and $\varphi(x)$ is strictly increasing on $[ \frac{3}{4}, \frac{4}{3} ]$ and strictly decreasing on $[ \frac{3}{2}, \frac{8}{3} ]$. 
\end{proof}

Having this dyadic partition of unity, we can define the \emph{Littlewood--Paley block}. Recall that any distribution $\psi$ defined on the circle $\T$ can be written as the Fourier series in the sense of distributions
\begin{equation*}
    \psi(x) = \frac{1}{\sqrt{2 \pi}} \sum_{ k \in \mathbb{Z} } \widehat{\psi}(k) e^{ikx}. 
\end{equation*}
For $j \geq 0$ we define the $j$-th Littlewood--Paley block of $\psi$ as 
\begin{equation}\label{eq:PL-block}
    \Delta_j \psi(x) \coloneqq \frac{1}{\sqrt{2 \pi}} \sum_{ k \in \mathbb{Z} } \varphi(2^{-j} k) \widehat{\psi}(k) e^{ikx}
\end{equation}
and define $\Delta_{-1} \psi \coloneqq \frac{1}{\sqrt{2 \pi}} \sum_{k \in \mathbb{Z}} \chi(k) \widehat{\psi}(k) e^{ikx}$. Then we set 
\begin{equation*}
    h_j (x) \coloneqq 
    \begin{cases}
        \displaystyle \sum_{ k \in \mathbb{Z} } \varphi(2^{-j} k) e^{ikx} & \text{ for } j \geq 0, \\[0.5cm]
        \displaystyle \sum_{ k \in \mathbb{Z} } \chi(k) e^{ikx} & \text{ for } j = -1,
    \end{cases}
\end{equation*}
which yields $\Delta_j f = h_j * f$. 

\begin{defn}
    For any $\gamma \in \R$, the \emph{Besov space} $B^{\gamma}_{\infty, \infty} (\T)$ consists of those functions/distribution $\psi$ on $\T$, for which
    \begin{equation*}
        \| \psi \|_{B^{\gamma}_{\infty, \infty}} \coloneqq \sup_{j \geq -1} 2^{\gamma j} \| \Delta_j \psi \|_{L^\infty} < \infty.
    \end{equation*}
\end{defn}

For $\gamma \in \R_+ \setminus \N_0$, the Besov space $B^{\gamma}_{\infty, \infty} (\T)$ coincides with the H{\"o}lder space $\CC^{\gamma}(\T)$, a proof of which can be found in \cite{MR2768550}.

\begin{rem} \label{rem: dyadic partition of unity}
    In fact, one can show that the Besov space $\CB^\gamma_{\infty,\infty}$ is independent of the particular choice of dyadic partition of unity, in the sense that the corresponding Besov norms are equivalent. More precisely, the functions $\varphi$ and $\chi$ may be chosen to have supports with different radii of annulus and ball. Moreover, the condition~\eqref{def: dyadic partition of unity eq5} in Lemma~\ref{def: dyadic partition of unity} can even be dropped. We refer the reader to \cite{MR2768550} for a detailed discussion.
\end{rem}

\section{Mild solution of fractional equation}
\label{sec:Duhamel}

Duhamel's principle is usually used to solve linear parabolic PDEs or to write a PDE in a mild form, which allows one to perform further fixed-point arguments. An analogue of Duhamel's principle in the case when the time derivative is fractional is more complicated; we refer to \cite{MR3357610} for its detailed explanation with relevant references, and in this section we only provide the formula and formally justify it.

Consider a non-homogeneous fractional evolution equation 
\begin{equation}\label{nonhomo frac evol eq}
    ( \partial_t^\beta - L) X(t,x) = f(t,x)
\end{equation}
on $\R_+ \times \T$ with an initial condition $X_0$ at time $t=0$. We assume that $L$ is a differential operator such that $\partial_t^\beta - L$ has a fundamental solution $G(t,x)$. Then  fractional Duhamel's principle yields the solution
\begin{equation}\label{nonhomo frac evol eq-mild}
    X(t,x) = \int_{\T} G(t, x-y) X_0(y)\, \d y + \int_0^t \int_{\T} G(t-s, x-y) \partial_s^{1-\beta} f(s,y)\, \d y \d s.
\end{equation}
When the involved functions are sufficiently regular, one can readily justify that this is indeed a solution. In this case, we recall the definition of the fractional derivative \eqref{eq:frac-deriv} and get
\begin{align*}
    \partial_t^\beta X(t,x) 
    &= I_t^{1-\beta} \circ \partial_t \int_{\T} G(t, x-y) X_0(y)\, \d y \\
    &\qquad + I_t^{1-\beta} \circ \partial_t \left( \int_0^t \int_{\T} G(t-s, x-y) \partial_s^{1-\beta} f(s,y)\, \d y \d s \right) \\
      &= I_t^{1-\beta} \Big( \int_{\T} G(0, x-y) \partial_t^{1-\beta} f(t,y)\, \d y \Big) + I_t^{1-\beta} \int_{\T} \partial_t G(t, x-y) X_0(y)\, \d y \\
    &\qquad +  I_t^{1-\beta} \Big( \int_0^t \int_{\T} \partial_t G(t-s, x-y) \partial_s^{1-\beta} f(s,y)\, \d y \d s \Big).
\end{align*}
We interchange the Caputo derivative with the space integral in the first term and then use the properties \eqref{eq:derivative-of-integral} and $G(0, x) = \delta_0(x)$ to write the first term as $f(t,x)$. We use the definition \eqref{eq:RL-integral} in the other two terms to write them as 
\begin{align*}
\int_{\T} &\int_0^{t} \frac{ \partial_r G(r, x-y) }{ (t -r )^{\beta} } \d r\, X_0(y)\, \d y + \int_0^t \int_{\T} \int_0^{t-s} \frac{ \partial_r G(r, x-y) }{ (t-s -r )^{\beta} } \d r\, \partial_s^{1-\beta} f(s,y)\, \d y \d s, 
\end{align*}
where we swapped the integrals and changed the integration variables. Recalling from \eqref{eq:frac-deriv} that $\int_0^{t} \frac{ \partial_r G(r, x) }{ (t -r )^{\beta} } \d r = \partial_t^{\beta}G(t, x)$ and using the definition of the fundamental solution $G$, we write the preceding expression as 
\begin{align*}
\int_{\T} &(L G) (t,x-y) X_0(y)\, \d y + \int_0^t \int_{\T} (L G)(t-s, x-y) \partial_s^{1-\beta} f(s,y)\, \d y \d s.
\end{align*}
Summarizing these computations, we obtain exactly \eqref{nonhomo frac evol eq}.

The mild form of the equation \eqref{nonhomo frac evol eq-mild} is not defined in the way it is written if the function $f$ is not $\CC^1$ in the time variable. This is because the definition of the fractional derivative \eqref{eq:frac-deriv} involves differentiation. As in \eqref{eq:heat-solution}, we make sense of it by using integration by parts and write 
\begin{equation}\label{eq:X-mild}
    X(t,x) = \int_0^t \left(\RL_{t-, s}^{1-\beta} P_{\beta, \alpha}(t-s) \ast f(s) \right)(x) \, \d s + \left[\bigl(I_{t-, s}^{\beta} P_{\beta, \alpha}(t-s) \ast f(s) \bigr)(x) \right]_{s=0}^t.
\end{equation}
The advantage of this formula is that the fractional derivative $\RL_{t-, s}^{1-\beta}$ can be applied to the kernel $P_{\beta, \alpha}$, and this expression is well-defined. The performed computations are valid for a temporarily $\CC^1$ function $f$, and for less regular functions we use \eqref{eq:X-mild} as a definition of mild solution of equation \eqref{nonhomo frac evol eq}.

It remains to justify that the expression \eqref{eq:X-mild} is indeed well-defined. First, we will show that $\RL_{t-, s}^{1-\beta} P_{\beta, \alpha}(t-s)$ is defined. We recall from Lemma~\ref{lem:frac_D_of_ML} that
\begin{equation*}
    \RL^{1-\beta}_{t-, s} E_{\beta}( -(t-s)^\beta |k|^\alpha ) = \frac{\beta}{(t-s)^{1-\beta}} E'_{\beta}(-(t-s)^\beta |k|^\alpha) 
\end{equation*}
and that the fractional heat kernel equals $P_{\beta, \alpha}(t,x) = \frac{1}{2 \pi} \sum_{k \in \Z} E_{\beta}(- t^\beta |k|^\alpha) e^{ikx}$. Thus, to verify whether we can take the fractional derivative of this expansion term by term, we need to show that
\begin{equation*}
    \sum_{k \in \Z} \left| \RL^{1-\beta}_{t-, s} E_{\beta}( -(t-s)^\beta |k|^\alpha ) \right| < \infty
\end{equation*}
holds locally uniformly in $t > s > 0$. To see that the sum is bounded, we use Lemmas~\ref{lem:frac_D_of_ML} and \ref{lem:ML-bound} to write it as
\begin{align*}
     \beta \sum_{k \in \Z} \left| (t-s)^{\beta-1} E'_{\beta}(-(t-s)^\beta |k|^\alpha) \right| 
     &\leq C_1 \sum_{k \in \Z} \left| (t-s)^{\beta-1} \frac{1}{ 1+ (t-s)^{2 \beta} |k|^{2 \alpha} } \right| \\
       &\leq C_2 (t-s)^{\beta-1} \left( 1 + 2 \int_0^\infty \frac{1}{ 1+ (t-s)^{2 \beta} k^{2 \alpha} } \d k \right).
\end{align*}
Using the change of variable $\tilde{k} = (t-s)^{ \frac{\beta}{\alpha} } k$ to write it as 
\begin{equation*}
    C_2 (t-s)^{ \beta - 1 } \left( 1 + 2 (t-s)^{ - \frac{\beta}{\alpha} } \int_0^\infty \frac{1}{ 1+ \tilde{k}^{2 \alpha} } \d \tilde{k} \right) 
    \leq C_3 (t-s)^{ \beta - 1 - \frac{\beta}{\alpha} },
\end{equation*}
where the integral is bounded because $2 \alpha > 1$ (see Assumption~\ref{assump:parameters}) and the last bound holds uniformly over $0 \leq s \leq t \leq T$ for any fixed $T > 0$ with proportionality constant $C_3$ depending on $T$. Therefore, we can write 
\begin{equation*}
    \RL_{t-, s}^{1-\beta} P_{\beta, \alpha}(t-s, x) = \frac{\beta}{2\pi} \sum_{k \in \Z} (t-s)^{\beta-1} E'_{\beta}(-(t-s)^\beta |k|^\alpha) e^{ikx}, 
\end{equation*}
which satisfies 
\begin{equation*}
    \| \RL_{t-, s}^{1-\beta} P_{\beta, \alpha}(t-s) \|_{L^\infty} \leq C_3 (t-s)^{ \beta - 1 - \frac{\beta}{\alpha} }
\end{equation*}
uniformly over $0 \leq s \leq t \leq T$ for any fixed $T > 0$.

Second, we can show that the kernel $I_{t-, s}^{\beta} P_{\beta, \alpha}(t-s)$ is well-defined. It follows by a similar argument, but using that 
\begin{equation*}
    \sum_{k \in \Z \setminus \{0\}} \frac{1}{|k|^\alpha} \bigl| E_{\beta}( - t^\beta |k|^\alpha) -1 \bigr| < \infty
\end{equation*}
holds locally uniformly in the time variable. 

Hence, expression \eqref{eq:X-mild} is well-defined if we can show that the kernels can be convolved with the function $f$. For example, if $f \in L^\infty_T$, then we can apply Young's convolution inequality to obtain
\begin{align*}
    &\left| \int_0^t \left(\RL_{t-, s}^{1-\beta} P_{\beta, \alpha}(t-s) \ast f(s) \right)(x) \, \d s \right| 
    \leq \int_0^t \| \RL_{t-, s}^{1-\beta} P_{\beta, \alpha}(t-s) \|_{L^1} \| f(s) \|_{L^\infty} \, \d s \\
      &\hspace{4cm}\leq 2 \pi C_3 \|f\|_{L^\infty_t} \int_0^t (t-s)^{ \beta - 1 - \frac{\beta}{\alpha} } \d s \leq C_4 t^{ \beta - \frac{\beta}{\alpha} } \|f\|_{L^\infty_t}
\end{align*}
for any $t \in (0, T]$ with any fixed $T > 0$, where the constant $C_4$ depends on $T$. The integral is finite due to Assumption~\ref{assump:parameters} on the parameters $\alpha$ and $\beta$. Thus, the first term on the right-hand side of \eqref{eq:X-mild} is well-defined. The second term is bounded similarly. 

\bibliographystyle{imsart-number}
\bibliography{references}

\end{document}